\newcommand{\dcspec}{\operatorname{Spec^\Sigma}}
\newcommand{\Spec}{\operatorname{Spec}}
\newcommand{\PSpec}{\operatorname{PSpec}}
\newcommand{\Rad}{\operatorname{Rad}}
\newcommand{\PId}{\operatorname{Id^\Sigma}}
\newcommand{\PRad}{\operatorname{Rad^\Sigma}}
\newcommand{\Max}{\operatorname{Max}}
\newcommand{\Id}{\operatorname{Id}}
\newcommand{\PMax}{\operatorname{PMax}}
\newcommand{\Hom}{\operatorname{Hom}}
\newcommand{\Fun}{\operatorname{F}}
\newtheorem{theorem}{Theorem}
\newtheorem{lemma}[theorem]{Lemma}
\newtheorem{proposition}[theorem]{Proposition}
\newtheorem{corollary}[theorem]{Corollary}
\newtheorem*{statement*}{Statement}
\newtheorem*{theorem*}{Theorem}
\newtheorem*{lemma*}{Lemma}
\newtheorem*{fact*}{Fact}
\theoremstyle{definition}
\newtheorem*{definition*}{Definition}
\theoremstyle{remark}
\newtheorem{remark}[theorem]{Remark}
\newtheorem*{remark*}{Remark}
\newtheorem{example}[theorem]{Example}
\newtheorem*{example*}{Example}
\title{Difference Nullstellensatz in the case of finite group\footnote{\Xy-pic package is used}}
\date{}
\author{Dmitry Trushin\footnote{This work was partially supported by the NSF grants CCF-0964875 and CCF-0952591}}
\begin{document}

\maketitle

\begin{abstract}
We develop a geometric theory for difference equations with a given
group of automorphisms. To solve this problem we extend the class of
difference fields to the class of absolutely flat simple difference
rings called pseudofields. We prove the Nullstellensatz over
pseudofields and investigate geometric properties of
pseudovarieties.
\end{abstract}

\section{Introduction}

Our purpose is to produce a geometric technique allowing to obtain a
Picard-Vessiot theory of difference equations with difference
parameters. Unfortunately, usual geometric approaches do not allow
to produce a Picard-Vessiot extension with difference parameters.
Roughly speaking, the problem is that morphisms of varieties in
these theories are not constructible. Therefore, we have to develop
an absolutely new machinery. The main advantage of our theory is
that the morphisms of varieties are constructible. We also describe
general properties of our varieties. Using these results one can
obtain a Picard-Vessiot theory of difference equations with
difference parameters~\cite{AOT}. The most important application of
this theory is the description of difference relations among
solutions of difference equations, especially, for Jacobi's
theta-function.

This article is devoted to producing a general geometric theory of
difference equations. Therefore, it is hard to distinguish main
results. Nevertheless, we underline the following ones.
Theorem~\ref{equtheor} describes difference closed pseudofields.
Proposition~\ref{homst} used to obtain all geometric results,
particulary, this proposition shows that morphisms of
pseudovarieties are constructible. Describing the global regular
functions, Theorem~\ref{regularf} reduces the geometric theory to
the algebraic one. Lemma~\ref{taylormod} appears in different forms
in the text (for example, Proposition~\ref{taylor}), this full
version allows to connect the theory of pseudovarieties with the
theory of algebraic varieties. A more detailed survey of the main
results is included in the following section.

In this paper, we develop a geometric theory of difference
equations. But what does it mean? In commutative algebra if an
algebraically closed field is given, there is a one-to-one
correspondence between the set of affine algebraic varieties and
radical ideals in a polynomial ring. Moreover, the category of
affine algebraic varieties is equivalent to the category of finitely
generated algebras. Such a point of view was extended to the case of
differential equations by Ritt and his successors. The notion of
differential algebraic variety led to the notion of differentially
closed field. The similar results appeared in difference algebra.
The first results in this direction were obtained by
Cohn~\cite{Cohn}. He discovered the following difficulty: to obtain
a necessarily number of solutions of a given system of difference
equations we have to consider solutions in several distinct
difference fields at the same time. This effect prevented from
finding the notion of difference closed field. However, such notion
was introduced by Macintyer~\cite{Maci}. In model theory the theory
of a difference closed field is called ACFA. A detailed survey of
ACFA theory can be found in~\cite{Chad}. The appearance of the
notion of difference closed field allowed to define the notion of
difference variety in the same manner as in differential case.

In~\cite{Hrush} Hrushovski develops the notion of difference
algebraic geometry in a scheme theoretic language. But his machinery
can be applied only to well-mixed rings. Unfortunately, here is a
deeper problem: all mentioned attempts of building a geometric
theory deal with fields. Let us recall the main difficulties: 1)
there exist a pair of difference fields such that there is no
difference overfield containing both of them 2) morphisms of
difference varieties are not constructible 3) a maximal difference
ideal is not necessarily prime 4) the set of all common zeros of a
nontrivial difference ideal is sometimes empty. An example can be
found in~\cite[Proposition~7.4]{Rosen}. The essential idea is to
extend the class of difference fields. Such approach was used in
~\cite{vPS,Tace,AmMas,Wib}. In particulary, in the Picard-Vessiot
theory of difference equations one founds that the ring containing
enough solutions is not necessarily a field but rather a finite
product of fields. Solving a similar problem, Takeuchi considered
simple Artinian rings. In~\cite{Wib} Wibmer combined the ideas of
Hrushovski with those similar to the ideas in Takeuchi's work to
develop the Galois theory of difference equations in the most
general context. All these works ultimately deal with finite
products of fields.

As we can see, the Galois theory of difference equations requires to
consider finite products of fields. This simple improvement allowed
to produce difference-differential Picard-Vessiot theory with
differential parameters~\cite{HardSin}. The next step is to obtain
Picard-Vessiot theory for the equations with difference parameters.
A first idea of how to do this is to use difference closed fields
and to repeat the discourse developed in~\cite{vPS,HardSin}.
Unfortunately, this method does not work. And the general problem is
that the morphisms of difference varieties are not constructible.
This effect appears when we construct a Picard-Vessiot extension for
an equation with difference parameters. In this situation we expect
that the constants of the extension coincide with the constants of
the initial field. And we use this fact to produce a Galois group as
a linear algebraic group. However, the constants of a Picard-Vessiot
extension need not be a field, for example~\cite[example~2.6]{AOT}.

Therefore, we must develop a new geometric theory. The first
question is what class of difference rings is appropriate for our
purpose. The answer is the class of all simple absolutely flat
difference rings. The detailed discussion of how to figure this out
can be found in~\cite{DNull}. We shall use a term pseudofield for
such rings. Our plan is to introduce difference closed pseudofields
and to develop the corresponding theory of pseudovarieties.

Here we shall briefly discuss some milestones of the theory. First
of all we need the notion of difference closed pseudofield. A
similar problem appears in differential algebra of prime
characteristic. In prime characteristic, we have to deal with
quasifields instead of fields. In~\cite{Quasi}, differentially
closed quasifields are introduced. The crucial role in this theory
is played by the ring of Hurwitz series. A difference algebraic
analogue is introduced in Section~\ref{sec33} and is called the ring
of functions. Such a construction appeared in many papers, for
example~\cite{vPS,Mor,UmMor}.

Here our theory is divided into two parts: the case of a finite
group or an infinite one. This paper deals with the finite groups.
We show that functions on the group give a full classification of
difference closed pseudofields. In this situation, pseudofields are
finite products of fields. Therefore, pseudofields in our sense and
pseudofields in~\cite{Wib} coincide. The case of infinite group is
much harder and is scrutinized in~\cite{DNull}.

The theory of difference rings has one important technical
difficulty: we cannot use an arbitrary localization. For example,
suppose that we need to investigate an inequality $f\neq0$. To do
this one can consider the localization with respect to the powers of
$f$. Unfortunately, the constructed ring is not necessarily a
difference one. To find the ``minimal'' difference ring containing
$1/f$, we should generate the smallest invariant multiplicative
subset by $f$. But this subset often contains zero. Therefore, we
have to develop a new machinery to avoid this difficulty. This
machinery is developed in section~\ref{sec42} and is called an
inheriting. Roughly speaking, all results of the paper are based on
the classification of difference closed pseudofields and the
inheriting machinery.

\subsection{Structure of the paper}

All necessary terms and notation are introduced in
Section~\ref{sec2}. Section~\ref{sec3} is devoted to the basic
techniques used in further sections. In Section~\ref{sec31}, we
introduce pseudoprime ideals and investigate their properties. In
the next Section~\ref{sec32}, we deal with pseudospectra and
introduce a topology on them. In Section~\ref{sec33}, the most
important class of difference rings is presented. We prove the
theorem of the Taylor homomorphism for this class of rings
(Proposition~\ref{taylor}).

The most interesting case for us is the case of finite groups of
automorphisms. Section~\ref{sec4}. In Section~\ref{sec41}, we
improve basic technical results obtained in Section~\ref{sec3}.
Section~\ref{sec42} provides the relation between the commutative
structure of a ring and its difference structure. Since in
difference algebra we are not able to produce fraction rings with
respect to an arbitrary multiplicatively closed sets, we need an
alternative technique, which is based on the inheriting of
properties. The main technical result is Proposition~\ref{inher}
allowing to avoid localization.

The structure of pseudofields is scrutinized in Section~\ref{sec43}.
We introduce difference closed pseudofields and classify them up to
isomorphism (Proposition~\ref{equtheor}). We prove that every
pseudofield (so, thus every field) can be embedded into a difference
closed pseudofield (Propositions~\ref{difclosemin}
and~\ref{difclosuni}). Our technique is illustrated by a sequence of
examples. Section~\ref{sec44} plays an auxiliary role. Its results
have special geometric interpretation. The most important statements
are Proposition~\ref{homst} and its corollaries~\ref{cor1}
and~\ref{cor2}.

Using difference closed pseudofield one can produce a geometric
theory of difference equations with finite group of automorphisms.
In Section~\ref{sec45}, we introduce the basic geometric notions.
The main result of the section is the difference Nullstellensatz for
pseudovarieties (Proposition~\ref{nullth}). In Section~\ref{sec46},
we construct two different structure sheaves of regular functions.
The first one consists of functions that are given by a fraction
$a/b$ in a neighborhood of each point. Every pseudofield has an
operation generalizing division. We use this operation to produce
the second sheaf. And the main result is that these sheaves coincide
and the ring of global sections consists of polynomial functions.
Section~\ref{sec47} contains nontrivial geometric results about
pseudovarieties. For example, Theorem~\ref{constrst} says that
morphisms are constructible.

There is a natural way to identify a pseudoaffine space with an
affine space over some algebraically closed field. Thus, every
pseudovariety can be considered as a subset of an affine space. One
can show that pseudovarieties are closed in the Zariski topology.
Moreover, there is a one-to-one correspondence between
pseudovarieties and algebraic varieties. We prove this in
Section~\ref{sec48} and we show how to derive geometric properties
of pseudovarieties using the adjoint variety in Section~\ref{sec49}.
The final section contains the basic results on the dimension.

\section{Terms and notation}\label{sec2}

This section is devoted to basic notions and objects used further.
We shall define an interesting for us class of rings and the notion
of pseudospectrum.

Let $\Sigma$ be an arbitrary group. A ring $A$ will be said to be a
difference ring if $A$ is an associative commutative ring with an
identity element such that the group $\Sigma$ is acting on $A$ by
means of ring automorphisms. A difference homomorphism of difference
rings is a homomorphism preserving the identity element and
commuting with the action of $\Sigma$. A difference ideal is an
ideal stable under the action of the group $\Sigma$. We shall write
$\Sigma$ instead of the word difference. A simple difference ring is
a ring with no nontrivial difference ideals. The set of all
$\Sigma$-ideals of $A$ will be denoted by $\PId A$. For every ideal
$\frak a\subseteq A$ and every element $\sigma\in\Sigma$ the image
of $\frak a$ under $\sigma$ will be denoted by $\frak a^\sigma$.

The set of all, radical, prime, maximal ideals of $A$ will be
denoted by $\Id A$, $\Rad A$, $\Spec A$, $\Max A$, respectively. The
set of all prime difference ideals of $A$ will be denoted by
$\dcspec A$. For every ideal $\frak a\subseteq A$ the largest
$\Sigma$-ideal laying in $\frak a$ will be denoted by $\frak
a_\Sigma$. Such an ideal exists because it coincides with the sum of
all difference ideals contained in $\frak a$. Note that
$$
\frak a_\Sigma=\{\, a\in\frak a\mid\forall \sigma\in\Sigma\colon
\sigma(a)\in\frak a \,\}.
$$
So, we have a mapping
$$
\pi\colon \Id A\to \PId A
$$
defined by the rule $\frak a\mapsto \frak a_\Sigma$. Straightforward
calculation shows that for every family of ideals $\frak a_\alpha$
we have
$$
\pi(\bigcap_{\alpha} \frak a_\alpha)=\bigcap_{\alpha} \pi(\frak
a_\alpha).
$$
It is easy to see that for any ideal $\frak a$ there is the equality
$$
\frak a_\Sigma=\bigcap_{\sigma\in\Sigma} \frak a^\sigma.
$$

We shall define the notion of pseudoprime ideal of a $\Sigma$-ring
$A$. Let $S\subseteq A$ be a multiplicatively closed subset
containing the identity element, and let $\frak q$ be a maximal
$\Sigma$-ideal not meeting $S$. Then the ideal $\frak q$ will be
called pseudoprime. The set of all pseudoprime ideals will be
denoted by $\PSpec A$ and is called a pseudospectrum.

Note that the restriction of $\pi$ onto the spectrum gives the
mapping
$$
\pi\colon \Spec A\to \PSpec A.
$$
The ideal $\frak p$ will be called $\Sigma$-associated with
pseudoprime $\frak q$ if $\pi(\frak p)=\frak q$. Let $\frak q$ be a
pseudoprime ideal, and let $S$ be a multiplicatively closed set from
the definition of $\frak q$, then every prime ideal containing
$\frak q$ and not meeting $S$ is $\Sigma$-associated with $\frak q$.
So, the mapping $\pi\colon \Spec A\to \PSpec A$ is surjective.

Let $S$ be a multiplicatively closed set and $\frak a$ be an ideal
of $A$. Then the saturation of $\frak a$ with respect to $S$ will be
the following ideal
$$
S(\frak a)=\bigcup_{s\in S}(\frak a:s).
$$
If $s$ is an element of $A$ then the saturation of $\frak a$ with
respect to $\{s^n\}$ will be denoted by $\frak a:s^\infty$.

If $S$ is a multiplicatively closed  subset of $A$ then the ring of
fractions of $A$ with respect to $S$ will be denoted by $S^{-1}A$.
If $S=\{t^n\}_{n=0}^\infty$ then the ring $S^{-1}A$ will be denoted
by $A_t$. If $\frak p$ is a prime ideal of $A$ and  $S=A\setminus
\frak p$ then the ring $S^{-1}A$ will be denoted by $A_{\frak p}$.

For any subset $X\subseteq A$ the smallest difference ideal
containing $X$ will be denoted by $[X]$. The smallest radical
difference ideal containing $X$ will be denoted by $\{X\}$. The
radical of an ideal $\frak a$ will be denoted by $\frak r(\frak a)$.
So, we have that $\{X\}=\frak r([X])$.

Let $f\colon A\to B$ be a homomorphism of rings and let $\frak a$
and $\frak b$ be ideals of $A$ and $B$, respectively. Then we define
the extension $\frak a^e$ to be the ideal $f(\frak a)B$ generated by
$f(\frak a)$. The contraction $\frak b^c$ is the ideal $f^*(\frak
b)=f^{-1}(\frak b)$. If the homomorphism $f\colon A\to B$ is a
difference one then both extension and contraction of difference
ideals are difference ones.

Let $f\colon A\to B$ be a $\Sigma$-homomorphism of difference rings,
and let $\frak q$ be a pseudoprime ideal of $B$. The contraction
$\frak q^c$ is pseudoprime because $\pi$ is surjective and commutes
with $f^*$. So, we have a mapping from $\PSpec B$ to $\PSpec A$.
This mapping will be denoted by $f^*_\Sigma$. It follows from the
definition that the following diagram is commutative
$$
\xymatrix{
    \Spec B\ar[r]^{f^*}\ar[d]^{\pi}&\Spec A\ar[d]^{\pi}\\
    \PSpec B\ar[r]^{f^*_\Sigma}&\PSpec A\\
}
$$

The set of all radical $\Sigma$-ideals of $A$ will be denoted by
$\PRad A$. For the convenience maximal difference ideals will be
called pseudomaximal. This set will be denoted by $\PMax A$. It is
clear, that every pseudomaximal ideal is pseudoprime ($S=\{1\}$). It
is easy to see, that a radical difference ideal can be presented as
an intersection of pseudoprime ideals. So, the objects with prefix
pseudo have the same behavior as the objects without it.

The ring of difference polynomials $A\{Y\}$ is a ring $A[\Sigma Y]$,
where $\Sigma$ acts in the natural way. A difference ring $B$ will
be called an $A$-algebra if there is a difference homomorphism $A\to
B$. It is clear, that every $A$-algebra can be presented as a
quotient ring of some polynomial ring $A\{Y\}$.

\section{Basic technique}\label{sec3}

In this section we shall prove basic results about the introduced
set of difference ideals.

\subsection{Pseudoprime ideals}\label{sec31}

\begin{proposition}\label{TechnicalStatement}
Let $\frak q$ and $\frak q'$ be pseudoprime ideals of a difference
ring $A$. Then
\begin{enumerate}
\item Ideal $\frak q$ is radical.
\item For every ideal $\frak p$ $\Sigma$-associated  with $\frak q$
there is the equality
$$
\frak q=\bigcap_{\sigma\in \Sigma}\frak p^\sigma.
$$
\item For every element $s\notin \frak q$ there is the equality
$$
(\frak q:s^\infty)_\Sigma=\frak q.
$$
\item  It follows from the equality $\frak q:s^\infty=\frak
q':s^\infty$ that for every element $s$ either $s$ belongs to $\frak
q$ and $\frak q'$, or $\frak q=\frak q'$.
\item For every two difference ideals $\frak a$ and $\frak b$ the
inclusion $\frak a\frak b\subseteq \frak q$ implies either $\frak
a\subseteq \frak q$, or $\frak b\subseteq \frak q$.
\end{enumerate}
\end{proposition}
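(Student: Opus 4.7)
My plan is to prove the five items roughly in the stated order, handling (2) first because it lets me present $\mathfrak{q}$ as an intersection of primes and thereby simplifies (1) and (3), and leaving (5) until the end, where the real maximality argument is needed.

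For (2) I would just observe that by the definition of $\Sigma$-association $\pi(\mathfrak{p})=\mathfrak{q}$, and that the formula $\mathfrak{a}_\Sigma=\bigcap_{\sigma}\mathfrak{a}^\sigma$ recorded in Section~\ref{sec2} unpacks this into the claimed equality; the existence of a $\Sigma$-associated prime is supplied by the surjectivity of $\pi\colon\Spec A\to\PSpec A$ noted just above the proposition. Statement (1) then follows immediately, since an intersection of prime ideals is radical. One could also argue directly: $\sqrt{\mathfrak{q}}$ is a $\Sigma$-ideal because $\Sigma$ acts by ring automorphisms, and it cannot meet the witnessing $S$ or else some power of the witness would sit in $\mathfrak{q}\cap S$; maximality then forces $\sqrt{\mathfrak{q}}=\mathfrak{q}$.

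For (3) I would use the presentation $\mathfrak{q}=\bigcap_{\sigma}\mathfrak{p}^\sigma$ from (2). Since each $\mathfrak{p}^\sigma$ is prime, $(\mathfrak{p}^\sigma:s^\infty)$ equals $\mathfrak{p}^\sigma$ when $s\notin\mathfrak{p}^\sigma$ and equals $A$ when $s\in\mathfrak{p}^\sigma$. A short check then gives $(\mathfrak{q}:s^\infty)=\bigcap_{\sigma\in T}\mathfrak{p}^\sigma$, where $T=\{\sigma:s\notin\mathfrak{p}^\sigma\}$ is nonempty precisely because $s\notin\mathfrak{q}$. Applying $\pi$, using that it commutes with arbitrary intersections, and noting that $\pi(\mathfrak{p}^\sigma)=\bigcap_{\tau}\mathfrak{p}^{\sigma\tau}$ reindexes back to $\mathfrak{q}$, I conclude $(\mathfrak{q}:s^\infty)_\Sigma=\mathfrak{q}$. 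For (4) I split on whether $s\in\mathfrak{q}$: if so, then $\mathfrak{q}:s^\infty=A$ forces $s^n\in\mathfrak{q}'$ for some $n$ and hence $s\in\mathfrak{q}'$ by radicalness; otherwise the same observation gives $s\notin\mathfrak{q}'$, and applying $(\cdot)_\Sigma$ to the hypothesis together with (3) on each side yields $\mathfrak{q}=\mathfrak{q}'$.

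Part (5) is the step I expect to carry the real content of the proposition, and it is where the maximality condition is used directly. Assuming both $\mathfrak{a}$ and $\mathfrak{b}$ escape $\mathfrak{q}$, the $\Sigma$-ideals $\mathfrak{q}+\mathfrak{a}$ and $\mathfrak{q}+\mathfrak{b}$ are strictly larger than $\mathfrak{q}$, hence by maximality each meets the witnessing multiplicatively closed set $S$. Picking witnesses $t=q_1+a\in S$ and $t'=q_2+b\in S$, the product $tt'$ still lies in $S$, while expanding it produces one term $ab\in\mathfrak{a}\mathfrak{b}\subseteq\mathfrak{q}$ together with three cross terms each visibly in $\mathfrak{q}$, so $tt'\in\mathfrak{q}\cap S$, a contradiction. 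The only care needed is to hold a single witnessing $S$ fixed throughout; past that there is no substantive obstacle.
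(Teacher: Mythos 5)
Your proof is correct, and items (1)--(4) run essentially along the paper's lines: (1) via the same ``the radical is a $\Sigma$-ideal missing $S$, so maximality forces equality'' argument (your alternative, that an intersection of primes is radical, also works once a $\Sigma$-associated prime is known to exist); (2) is the same unpacking of $\pi(\frak p)=\frak q$; and (4) is the same case split, where you are in fact a little more careful than the paper in passing from $s^n\in\frak q'$ to $s\in\frak q'$ via radicalness. In (3) you compute $(\frak q:s^\infty)$ exactly as $\bigcap_{\sigma\in T}\frak p^\sigma$ and then push the whole intersection through $\pi$, whereas the paper only bounds $(\frak q:s^\infty)$ above by a single $\frak p^\sigma$ with $s\notin\frak p^\sigma$ and applies $(\cdot)_\Sigma$ to that one inclusion; both are fine, the paper's being marginally shorter. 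The genuine divergence is (5): the paper picks a $\Sigma$-associated prime $\frak p\supseteq\frak q$, uses primality to get $\frak a\subseteq\frak p$ or $\frak b\subseteq\frak p$, and then applies $(\cdot)_\Sigma$; you instead argue directly from the definition, using maximality against the witnessing set $S$ to extract elements $q_1+a,\,q_2+b\in S$ whose product lands in $\frak q\cap S$. Your route has the small advantage of not invoking the existence of a $\Sigma$-associated prime (i.e., the Zorn's-lemma construction behind the surjectivity of $\pi$), at the cost of a slightly longer computation; both are valid.
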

\begin{proof}
(1). Let $S$ be a multiplicatively closed subset of $A$ such that
$\frak q$ is a maximal difference ideal not meeting $S$. Then $\frak
r(\frak q)$ is a difference ideal containing $\frak q$ and not
meeting $S$. Consequently, $\frak r(\frak q)=\frak q$.

(2). The equality $\frak p_\Sigma=\cap \frak p^\sigma$ is always
true. But from the definition we have $\frak q=\frak p_\Sigma$.

(3). Let $\frak p$ be a $\Sigma$-associated with $\frak q$ prime
ideal. Then it follows from~(2) that there exists $\sigma\in\Sigma$
such that $s\notin \frak p^\sigma$.  Therefore, there is the
inclusion
$$
(\frak q:s^\infty)\subseteq(\frak p^\sigma:s^\infty)=\frak p^\sigma,
$$
and, consequently,
$$
(\frak q:s^\infty)_\Sigma\subseteq\frak p^\sigma_\Sigma=\frak q.
$$
The other inclusion is obvious.

Note that for every ideal $\frak a$ the equality $\frak
a:s^\infty=A$  holds if and only if $s\in \frak a$. Therefore, we
need to consider the case $s\notin \frak q$ and $s\notin \frak q'$.
From the previous item we have
$$
\frak q=(\frak q:s^\infty)_\Sigma=(\frak q':s^\infty)_\Sigma=\frak
q'.
$$

(5). Let $\frak p$ be a $\Sigma$-associated with $\frak q$ prime
ideal. Then either $\frak a\subseteq \frak p$, or $\frak b\subseteq
\frak p$. Suppose that the first one holds. Then
$$
\frak a=\frak a_\Sigma\subseteq \frak p_\Sigma=\frak q.
$$
\end{proof}

We shall show that condition~(3) does not hold for an arbitrary
multiplicatively closed subset $S$.

\begin{example}
Let $\Sigma=\mathbb Z$, consider the ring $A=K^\Sigma$, where $K$ is
a field. Then this ring is of Krull dimension zero. So, every prime
ideal is maximal. This is a well-known fact that the maximal ideals
of $A$ can be described in terms of maximal filters on $\Sigma$.
Namely, for an arbitrary filter $\mathcal F$ of $\Sigma$  we define
the ideal
$$
\frak m_{\mathcal F}=\{\,x\in A\mid \{\,n\mid x_n=0\,\}\in\mathcal
F\,\}.
$$
There are two different types of maximal ideals. The first type
corresponds to principal maximal filters
$$
\frak m_k=\{\,x\in A\mid x_k=0\,\}
$$
and the second type corresponds to ultrafilters $\frak m_{\mathcal
F}$. It is clear that for all ideals of the first type we have
$(\frak m_k)_\Sigma=0$. But for any ultrafilter $\mathcal F$ the
ideal $\frak m_{\mathcal F}$  contains the ideal $K^{\oplus\Sigma}$
consisting of all finite sequences. Therefore,  $(\frak m_{\mathcal
F})_\Sigma\neq0$. As we can see not every minimal prime ideal
containing zero ideal is $\Sigma$-associated with it. Additionally,
set  $S=A\setminus \frak m_{\mathcal F}$, where $\mathcal F$ is an
ultrafilter. Then
$$
(S(0))_\Sigma=(\frak m_{\mathcal F})_\Sigma\neq0.
$$
\end{example}

Let us note one peculiarity of radical difference ideals.

\begin{example}
Let $\Sigma = \mathbb Z$. Consider the ring $A=K\times K$, where
$\Sigma$ acts as a permutation of factors. Then
$$
\{(1,0)\}\{(0,1)\}\nsubseteq \{(1,0)(0,1)\},
$$
because the left-hand part is $A$ and the right-hand part is $0$.
So, the condition $\{X\}\{Y\}\subseteq\{XY\}$ does not hold.
\end{example}

\subsection{Pseudospectrum}\label{sec32}

We shall provide a pseudospectrum with a structure of topological
space such that the mapping $\pi$ will be continuous.

Let $A$ be an arbitrary difference ring and $X$ be the set of all
its pseudoprime ideals. For every subset $E\subseteq A$ let $V(E)$
denote the set of all pseudoprime ideals containing $E$.

\begin{proposition}\label{pspectop}
Using the above notation the following holds:
\begin{enumerate}
\item If $\frak a$ is a difference ideal generated by $E$, then
$$
V(E)=V(\frak a)=V(\frak r(\frak a)).
$$
\item $V(0)=X$, $V(1)=\emptyset$.
\item Let $(E_i)_{i\in I}$  be a family of subsets of $A$.
Then
$$
V\left(\bigcup_{i\in I}E_i\right)=\bigcap_{i\in I}V\left(E_i\right).
$$
\item For any difference ideals $\frak a$, $\frak b$ in $A$ the
following holds
$$
V(\frak a\cap\frak b)=V(\frak a\frak b)=V(\frak a)\cup V(\frak b).
$$
\end{enumerate}
\end{proposition}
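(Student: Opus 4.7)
The plan is to verify the four items in turn, leaning heavily on Proposition~\ref{TechnicalStatement}, which provides exactly the arithmetic of pseudoprime ideals we need.

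For (1), I would argue by two obvious inclusions. Since $E\subseteq\frak a\subseteq\frak r(\frak a)$, one has $V(\frak r(\frak a))\subseteq V(\frak a)\subseteq V(E)$. Conversely, if $\frak q\in V(E)$, then $\frak q$ is a difference ideal containing $E$, hence contains $[E]=\frak a$; and since $\frak q$ is radical by Proposition~\ref{TechnicalStatement}(1), it also contains $\frak r(\frak a)$.

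Item (2) is immediate: every ideal contains $0$, and a pseudoprime ideal is proper (it avoids some multiplicatively closed set containing $1$), so it cannot contain $1$. Item (3) is formal: $\frak q\supseteq\bigcup E_i$ iff $\frak q\supseteq E_i$ for every $i$, so the two sides agree as subsets of $X$.

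The only item with real content is (4). The chain of inclusions $V(\frak a)\cup V(\frak b)\subseteq V(\frak a\cap\frak b)\subseteq V(\frak a\frak b)$ is formal, using $\frak a\frak b\subseteq\frak a\cap\frak b\subseteq\frak a,\frak b$. The nontrivial step is the reverse containment $V(\frak a\frak b)\subseteq V(\frak a)\cup V(\frak b)$, and this is exactly where I would invoke Proposition~\ref{TechnicalStatement}(5): if $\frak q$ is pseudoprime and $\frak a\frak b\subseteq\frak q$, then either $\frak a\subseteq\frak q$ or $\frak b\subseteq\frak q$, placing $\frak q$ in $V(\frak a)\cup V(\frak b)$. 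This is the main (and essentially only) obstacle, and it has already been discharged in the previous proposition, so here it is just a citation.

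Together these four items give the closed sets of a topology, and continuity of $\pi\colon\Spec A\to\PSpec A$ follows from (1): the preimage of $V(E)$ is the usual Zariski closed set $V(E)^c$ in $\Spec A$ associated with the same set $E$, since $\pi(\frak p)\supseteq E$ iff $\frak p\supseteq E$ (using that $E$ is contained in $\frak p_\Sigma$ precisely when every $\Sigma$-translate of $E$ lies in $\frak p$, which for a difference subset $[E]$ reduces to $\frak p\supseteq[E]\supseteq E$).
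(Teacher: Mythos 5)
Your proof is correct and follows the paper's argument exactly: item (1) from the definition of $V(E)$ together with radicality of pseudoprime ideals (Proposition~\ref{TechnicalStatement}(1)), items (2) and (3) as formalities, and the only substantive inclusion in (4) via Proposition~\ref{TechnicalStatement}(5). The extra remark on continuity of $\pi$ matches the paper's follow-up discussion as well (with the caveat, which your parenthetical already handles, that for a non-invariant set $E$ one must pass to $[E]$ before claiming $\frak p\supseteq E$ implies $\pi(\frak p)\supseteq E$).
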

\begin{proof}
Condition~(1) immediately follows from the definition of $V(E)$ and
the fact that pseudoprime ideal is radical. Conditions~(2) and~(3)
are obvious. The last statement immediately follows from
condition~(5) of Proposition~\ref{TechnicalStatement}.
\end{proof}

So, we see that the sets $V(E)$ satisfy the axioms for closed sets
in topological space. We shall fix this topology on pseudospectrum.
Consider the mapping
$$
\pi\colon \Spec A\to \PSpec A.
$$
For every difference ideal $\frak a$ we have
$$
\pi^{-1}(V(\frak a))=V(\frak a),
$$
i.~e., the mapping $\pi$ is continuous. Let us recall that $\pi$ is
always surjective.

Let us denote the pseudospectrum of a difference ring $A$ by $X$.
Then for every element $s\in A$ the complement of $V(s)$ will be
denoted by $X_s$. From the definition of topology we have that every
open subset can be presented as a union of the sets of the form
$X_s$. In other words the family $\{X_s\mid s\in A\}$ forms a basis
of topology. It should be noted that the intersection $X_s\cap X_t$
is not necessarily of the form $X_u$.

\begin{proposition}\label{pspecst}
Using the above notation we have
\begin{enumerate}
\item $X_s\cap X_t=\cup_{\sigma,\tau\in\Sigma} X_{\sigma(s) \tau(t)}$.
\item $X_s=\emptyset$ iff  $s$ is nilpotent.
\item $X$ is quasi-compact (that is, every open covering of $X$ has
a finite subcovering).
\item There is a one-to-one correspondence between the set of all closed
subsets of the pseudospectrum and the set of all radical difference
ideals:
$$
\frak t\mapsto V(\frak t)\:\mbox{ è }\:V(E)\mapsto \bigcap_{\frak
q\in V(E)} \frak q.
$$
\end{enumerate}
\end{proposition}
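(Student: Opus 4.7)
I would take the four items in order, leaning on Proposition~\ref{TechnicalStatement} (especially parts~(1) and~(5)) and on a standard Zorn-style construction of pseudomaximal ideals avoiding a multiplicative set; nothing is deep.

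For~(1), first observe that because a pseudoprime $\frak q$ is $\Sigma$-stable, $s\in\frak q$ iff $\sigma(s)\in\frak q$ for every $\sigma\in\Sigma$, so $s\notin\frak q$ is equivalent to $[s]\nsubseteq\frak q$. Thus $\frak q\in X_s\cap X_t$ iff $[s]\nsubseteq\frak q$ and $[t]\nsubseteq\frak q$, which by the contrapositive of part~(5) of Proposition~\ref{TechnicalStatement} is equivalent to $[s][t]\nsubseteq\frak q$. As an ordinary ideal $[s]$ is generated by $\{\sigma(s)\mid\sigma\in\Sigma\}$, so $[s][t]$ is the ordinary ideal generated by $\{\sigma(s)\tau(t)\mid\sigma,\tau\in\Sigma\}$; hence $[s][t]\nsubseteq\frak q$ holds iff some generator $\sigma(s)\tau(t)$ lies outside $\frak q$, giving the claimed union.

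For~(2) and~(3), I would Zorn. If $s$ is nilpotent it lies in every radical ideal and hence in every pseudoprime (pseudoprimes are radical by part~(1) of Proposition~\ref{TechnicalStatement}), so $X_s=\emptyset$. Conversely, if $s$ is not nilpotent then $S=\{s^n\}$ misses $0$, and Zorn's lemma applied to the difference ideals disjoint from $S$ produces a maximal such ideal, which is pseudoprime and avoids $s$, whence $X_s\neq\emptyset$. For~(3), any open cover refines to a cover by basic opens $\{X_{s_i}\}_{i\in I}$, and this is a cover iff $V([\{s_i\}])=\emptyset$, i.e.\ no pseudomaximal ideal contains the difference ideal generated by $\{s_i\}$. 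Running the same Zorn argument contrapositively forces $[\{s_i\}]=A$, so $1=\sum_{k=1}^{n}a_k\sigma_k(s_{i_k})$ for finitely many indices, whence $\{X_{s_{i_1}},\dots,X_{s_{i_n}}\}$ already covers $X$.

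For~(4), the two maps are mutual inverses: $\bigcap_{\frak q\in V(\frak t)}\frak q=\frak t$ for a radical difference ideal $\frak t$ by the remark preceding the proposition that every radical difference ideal is an intersection of pseudoprime ideals; and $V\bigl(\bigcap_{\frak q\in V(E)}\frak q\bigr)=V(E)$ then follows by applying $V$ to both sides of the first identity (with $\frak t$ the smallest radical difference ideal containing $E$) together with part~(1) of Proposition~\ref{pspectop}. The only nonroutine step is the combinatorial reduction inside~(1): one must know that $[s][t]$ is the \emph{ordinary} ideal with explicit generators $\sigma(s)\tau(t)$, so that a pseudoprime failing to contain the product ideal must fail to contain one specific element of this form rather than some opaque combination.
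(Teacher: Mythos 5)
Your proof is correct and follows essentially the same route as the paper: Zorn's lemma applied to difference ideals avoiding a multiplicative set for items (2)--(4), and part~(5) of Proposition~\ref{TechnicalStatement} for item~(1), which the paper dismisses as a ``straightforward calculation'' but which you rightly identify as the only step needing the pseudoprimality of $\frak q$ (via the explicit generators $\sigma(s)\tau(t)$ of the ordinary ideal $[s][t]$). No gaps.
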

\begin{proof}
Condition~(1) is proved by straightforward calculation.

(2). Note that $X_s$ is not empty if and only if the set of all
prime ideals not containing $s$ is not empty. The last condition is
equivalent to $s$ being not nilpotent.

(3). Let  $\{V(\frak a_i)\}$ be a centered family of closed subsets
(that is every intersection of finitely many elements is not empty),
where $\frak a_i$ are difference ideals. We need to show that
$\cap_i V(\frak a_i)$  is not empty. Suppose that contrary holds
$\cap_i V(\frak a_i)=\emptyset$. But
$$
\bigcap_i V(\frak a_i)=V(\sum_i \frak a_i)=\emptyset.
$$
The last equality is equivalent to condition that $1$ belongs to
$\sum_i\frak a_i$. But in this situation $1$ belongs to a finite
sum. Therefore, the corresponding intersection of finitely many
closed subsets is empty, contradiction.

(4). The statement immediately follows from the equality
$$
\frak r([E])=\bigcap_{\frak q\in V(E)} \frak q.
$$
Let us show that this equality holds. The inclusion $\subseteq$ is
obvious. Let us show the other one. Let $g$ not belong to the
radical of $[E]$ then consider the set of all difference ideals
containing $E$ and not meeting $\{g^n\}_{n=0}^\infty$. This set is
not empty, since $[E]$ is in it. From Zorn's lemma there is a
maximal difference ideal with that property. From the definition
this ideal is pseudoprime.
\end{proof}

\subsection{Functions on the group}\label{sec33}

For every commutative ring $B$ the set of all functions from
$\Sigma$ to $B$ will be denoted by $\Fun B$. As a commutative ring
it coincides with the product $\prod_{\sigma\in\Sigma} B$. Let us
provide $\Fun B$ with the structure of a difference ring. We define
$\sigma(f)(\tau)=f(\sigma^{-1}\tau)$. For every element $\sigma$ of
the group $\Sigma$ there is a homomorphism
\begin{equation*}
\begin{split}
\gamma_{\sigma}\colon\Fun B&\to B\\
f&\mapsto f(\sigma)
\end{split}
\end{equation*}
It is clear that $\gamma_{\tau}(\sigma
f)=\gamma_{\sigma^{-1}\tau}(f)$.

\begin{proposition}\label{taylor}
Let $A$ be a difference ring, and let $\varphi\colon A\to B$ be a
homomorphism of rings. Then for every element $\sigma\in \Sigma$
there exists a unique difference homomorphism $\Phi_{\sigma}\colon
A\to \Fun B$ such that the following diagram is commutative
$$
\xymatrix{
                                    & {\Fun B}\ar[d]^{\gamma_{\sigma}} \\
        A\ar[r]^{\varphi}\ar[ur]^{\Phi_\sigma} & B
}
$$
\end{proposition}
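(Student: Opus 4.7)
The proof is essentially forced: the commutativity of the diagram together with $\Sigma$-equivariance pins down $\Phi_\sigma$ completely, and then one just checks that the resulting formula works.

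\medskip

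\emph{Uniqueness first.} Suppose $\Phi_\sigma$ exists. For any $a\in A$ and any $\tau\in\Sigma$, $\Sigma$-equivariance gives
$$
\Phi_\sigma(\tau a) = \tau\bigl(\Phi_\sigma(a)\bigr),
$$
so evaluating both sides at $\sigma\in\Sigma$ and using the definition of the $\Sigma$-action on $\Fun B$,
$$
\varphi(\tau a) = \gamma_\sigma\bigl(\Phi_\sigma(\tau a)\bigr) = \tau\bigl(\Phi_\sigma(a)\bigr)(\sigma) = \Phi_\sigma(a)(\tau^{-1}\sigma).
$$
Writing $\rho=\tau^{-1}\sigma$ (so $\tau=\sigma\rho^{-1}$), this becomes
$$
\Phi_\sigma(a)(\rho) = \varphi\bigl(\sigma\rho^{-1}(a)\bigr),
$$
which determines $\Phi_\sigma$ on every argument and every coordinate.

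\medskip

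\emph{Existence.} Define $\Phi_\sigma\colon A\to \Fun B$ by the formula just derived, namely $\Phi_\sigma(a)(\rho)=\varphi(\sigma\rho^{-1}(a))$. Since $\sigma\rho^{-1}$ is a ring automorphism of $A$ and $\varphi$ is a ring homomorphism, each coordinate of $\Phi_\sigma$ is a ring homomorphism, so $\Phi_\sigma$ itself is a ring homomorphism into the product $\Fun B=\prod_{\rho\in\Sigma}B$. Setting $\rho=\sigma$ gives $\gamma_\sigma(\Phi_\sigma(a))=\varphi(a)$, so the diagram commutes.

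\medskip

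\emph{Difference equivariance.} For $\tau\in\Sigma$ we compute the two sides separately:
$$
\Phi_\sigma(\tau a)(\rho) = \varphi\bigl(\sigma\rho^{-1}\tau(a)\bigr),
$$
while, using the definition $\tau(f)(\rho)=f(\tau^{-1}\rho)$,
$$
\tau\bigl(\Phi_\sigma(a)\bigr)(\rho) = \Phi_\sigma(a)(\tau^{-1}\rho) = \varphi\bigl(\sigma(\tau^{-1}\rho)^{-1}(a)\bigr) = \varphi\bigl(\sigma\rho^{-1}\tau(a)\bigr).
$$
Since this holds for every $\rho$, we get $\Phi_\sigma(\tau a)=\tau(\Phi_\sigma(a))$, so $\Phi_\sigma$ is a $\Sigma$-homomorphism.

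\medskip

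There is no real obstacle; the only subtle point is making sure the $\Sigma$-action convention $\sigma(f)(\tau)=f(\sigma^{-1}\tau)$ is used consistently, which is what forces the inverse in the formula $\Phi_\sigma(a)(\rho)=\varphi(\sigma\rho^{-1}(a))$. Everything else is a direct verification.
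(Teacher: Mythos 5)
Your proof is correct and follows essentially the same route as the paper: uniqueness is forced by evaluating the equivariance relation at $\sigma$, yielding the formula $\Phi_\sigma(a)(\rho)=\varphi(\sigma\rho^{-1}a)$, which is then verified directly to be a difference ring homomorphism making the triangle commute. Your write-up is just a slightly more explicit version of the same computation.
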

\begin{proof}
By the hypothesis the homomorphism $\Phi_\sigma$ satisfies the
property
$$
\Phi_\sigma(a)(\tau^{-1}\sigma)=(\tau\Phi_\sigma(a))(\sigma)=\varphi(\tau
a)
$$
whenever $a\in A$ and $\tau\in \Sigma$. Consequently, if
$\Phi_\sigma$ exists then it is unique. Define the mapping
$\Phi_\sigma$ by the following rule
$$
\Phi_\sigma(a)(\tau)=\varphi(\sigma\tau^{-1} a).
$$
It is clear that this mapping is a homomorphism. The following
calculation shows that this homomorphism is a difference one.
$$
(\nu
\Phi_\sigma(a))(\tau)=\Phi_\sigma(a)(\nu^{-1}\tau)=\varphi(\sigma\tau^{-1}\nu
a)=\Phi_\sigma(\nu a)(\tau).
$$
\end{proof}

The ring $\Fun B$ is an essential analogue of the Hurwitz series
ring. The elements of $\Fun B$ are the analogues of the Taylor
series. The homomorphisms $\Phi_\sigma$ are analogues of the Taylor
homomorphism for the Hurwitz series ring. Therefore, we shall call
these homomorphisms the Taylor homomorphisms at $\sigma$. The Taylor
homomorphism at the identity of the group will be called simpler the
Taylor homomorphism.

It should be noted that the set of all invariant elements of $\Fun
B$ can be identified with $B$. Namely, $B$ coincides with the set of
all constant functions. So, we suppose that $B$ is embedded in $\Fun
B$.

\section{The case of finite group}\label{sec4}

\subsection{Basic technique}\label{sec41}

From now we shall suppose that the group $\Sigma$ is finite. First
of all we shall prove more delicate technical results for the finite
group.

\begin{proposition}\label{techbasic}
Let $A$ be a difference ring, $\frak q$ be a pseudoprime ideal of
$A$, and $S$ be a multiplicatively closed subset in $A$. Then
\begin{enumerate}
\item Every minimal prime ideal containing $\frak q$ is
$\Sigma$-associated with $\frak q$.
\item The restriction of  $\pi$ onto $\Max A$ is a well-defined mapping $\pi\colon \Max A\to\PMax A$.
\item If $S\cap\frak q=\emptyset$ then $(S(\frak q))_{\Sigma}=\frak
q$.
\end{enumerate}
\end{proposition}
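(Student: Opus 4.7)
The plan is to handle the three parts in the order stated, using finiteness of $\Sigma$ at each step in an essential way, and leaning on Proposition~\ref{TechnicalStatement} wherever possible.

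For part~(1), I would start by applying Proposition~\ref{TechnicalStatement}(2): pick any prime $\frak p$ $\Sigma$-associated with $\frak q$ (one exists by surjectivity of $\pi$), so $\frak q=\bigcap_{\sigma\in\Sigma}\frak p^\sigma$. Since $\Sigma$ is finite, this is a finite intersection of primes. If $\frak p'$ is a minimal prime over $\frak q$, then $\prod_\sigma \frak p^\sigma\subseteq\bigcap_\sigma\frak p^\sigma=\frak q\subseteq\frak p'$, so primeness gives $\frak p^\sigma\subseteq\frak p'$ for some $\sigma$, and minimality forces $\frak p'=\frak p^\sigma$. Reindexing the intersection, $(\frak p^\sigma)_\Sigma=\bigcap_\tau\frak p^{\tau\sigma}=\bigcap_{\tau'}\frak p^{\tau'}=\frak q$, so $\frak p'$ is $\Sigma$-associated with $\frak q$.

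For part~(2), given $\frak m\in\Max A$, I must show $\frak m_\Sigma=\bigcap_{\sigma\in\Sigma}\frak m^\sigma$ is a maximal $\Sigma$-ideal. Suppose a difference ideal $\frak b$ strictly contains $\frak m_\Sigma$. Then $\frak b\not\subseteq\frak m^{\sigma_0}$ for some $\sigma_0$; since $\frak b$ is $\Sigma$-stable, applying $\tau$ yields $\frak b=\tau(\frak b)\not\subseteq\tau(\frak m^{\sigma_0})=\frak m^{\tau\sigma_0}$, so $\frak b\not\subseteq\frak m^{\sigma}$ for every $\sigma\in\Sigma$. Maximality of each $\frak m^\sigma$ gives $\frak b+\frak m^\sigma=A$ for all $\sigma$. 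Because $\Sigma$ is finite, iterating the standard identity ``coprime to each of finitely many ideals $\Rightarrow$ coprime to their product'' produces $\frak b+\prod_\sigma\frak m^\sigma=A$, and since $\prod_\sigma\frak m^\sigma\subseteq\bigcap_\sigma\frak m^\sigma=\frak m_\Sigma\subseteq\frak b$, this forces $\frak b=A$.

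For part~(3), the inclusion $\frak q\subseteq(S(\frak q))_\Sigma$ is automatic. For the reverse, suppose $a\in(S(\frak q))_\Sigma$. Then for every $\tau\in\Sigma$ there exists $s_\tau\in S$ with $s_\tau\tau(a)\in\frak q$. Because $\Sigma$ is finite, the product $s=\prod_{\tau\in\Sigma}s_\tau$ lies in $S$, and for each fixed $\tau$ multiplying $s_\tau\tau(a)\in\frak q$ by the remaining factors gives $s\tau(a)\in\frak q$. Thus $\tau(a)\in(\frak q:s^\infty)$ for every $\tau$, i.e., $a\in(\frak q:s^\infty)_\Sigma$. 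Since $s\in S$ and $S\cap\frak q=\emptyset$, Proposition~\ref{TechnicalStatement}(3) yields $(\frak q:s^\infty)_\Sigma=\frak q$, so $a\in\frak q$.

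The main obstacle I expect is part~(2): it is the only statement that genuinely requires a structural argument beyond the formal manipulations of~(1) and~(3). An alternative route, should the direct coprimality argument prove awkward, would be to use~(1) to identify the minimal primes over $\frak m_\Sigma$ with the orbit $\{\frak m^\sigma\}$, deduce via CRT that $A/\frak m_\Sigma$ is a finite product of fields on which $\Sigma$ acts by permuting the factors transitively, and then observe that the only $\Sigma$-stable ideals of such a ring are $0$ and the whole ring.
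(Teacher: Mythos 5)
Your proof is correct. Part~(1) is essentially the paper's argument (the paper invokes the standard fact that a prime containing a finite intersection of primes contains one of them, where you use the product trick $\prod_\sigma\frak p^\sigma\subseteq\bigcap_\sigma\frak p^\sigma\subseteq\frak p'$ --- same content), but parts~(2) and~(3) take genuinely different routes. For~(2) the paper argues top--down: using finiteness of $\bigcap_\sigma\frak m^\sigma$ it shows every prime ideal containing $\frak q=\frak m_\Sigma$ equals some $\frak m^\sigma$, so any proper difference ideal $\frak a\supseteq\frak q$ lies inside some $\frak m^\sigma$ and hence $\frak a=\frak a_\Sigma\subseteq(\frak m^\sigma)_\Sigma=\frak q$; your coprimality argument ($\frak b\not\subseteq\frak m^\sigma$ for all $\sigma$ by $\Sigma$-stability, hence $\frak b+\prod_\sigma\frak m^\sigma=A$ with $\prod_\sigma\frak m^\sigma\subseteq\frak b$) is a clean, self-contained alternative that bypasses the surjectivity of $\pi$ and the classification of primes over $\frak q$; it is exactly the argument your ``alternative CRT route'' would also formalize. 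For~(3) the paper chooses a minimal prime $\frak p'$ over $\frak q$ disjoint from $S$, observes $S(\frak q)\subseteq S(\frak p')=\frak p'$, and then applies part~(1); your element-wise argument --- bundling the finitely many saturating elements into a single $s=\prod_\tau s_\tau\in S$ and then quoting Proposition~\ref{TechnicalStatement}(3) --- is equally valid and arguably more transparent, since it isolates precisely where finiteness of $\Sigma$ enters and reuses a result already on record instead of re-running the existence argument for primes disjoint from $S$. The trade-off is that the paper's proofs of~(2) and~(3) lean on~(1) and on the surjectivity of $\pi$, keeping everything inside the $\Spec\to\PSpec$ picture, while yours stay at the level of ideal arithmetic.
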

\begin{proof}
(1). Let $\frak p$ be a $\Sigma$-associated with $\frak q$ prime
ideal. Then $\frak q=\cap_\sigma \frak p^\sigma$. Now let $\frak p'$
be an arbitrary minimal prime ideal containing $\frak q$. Then
$\cap_\sigma \frak p^\sigma=\frak q\subseteq\frak p'$. Consequently,
$\frak p^\sigma\subseteq \frak p'$  for some $\sigma$ and, thus,
$\frak p^\sigma=\frak p'$.

(2). Let $\frak m$ be a maximal ideal and let $\frak q=\frak
m_{\Sigma}$. We shall show that $\frak q$ is a maximal difference
ideal. Since the mapping $\Spec A\to\PSpec A$ is surjective, it
suffices to show that every prime ideal containing $\frak q$
coincides with $\frak m^\sigma$ for some $\sigma$. Indeed, let
$\frak q\subseteq\frak p$. Then since $\frak q=\cap_\sigma \frak
m^\sigma$, we have $\frak m^\sigma\subseteq\frak p$  for some
$\sigma$. The desired result holds because $\frak m$ is maximal.

(3). By the hypothesis there is a prime ideal $\frak p$ such that
$\frak q\subseteq \frak p$ and $S\cap\frak p=\emptyset$. Then there
exists a minimal prime ideal $\frak p'$ with the same condition.
From the definition we have $S(\frak q)\subseteq S(\frak p')=\frak
p'$. Thus, the equality $\frak q=\frak p'_\Sigma$ follows from
condition~(1).
\end{proof}

Let $A$ be a difference ring, and $X$ be the pseudospectrum of $A$.
For any radical difference ideal $\frak t$ we define the closed
subset $V(\frak t)$ in $X$. Conversely, for every closed subset $Z$
we define the radical difference ideal $\cap_{\frak q\in Z}\frak q$.

\begin{proposition}
The mentioned mappings are inverse to each other bijections between
$\Rad^\Sigma A$ and $\{\,Z\subseteq X\mid Z=V(E)\,\}$. Suppose
additionally that any radical difference ideal in $A$ is an
intersection of finitely many prime ideals (for example $A$ is
Noetherian). Then a closed set is irreducible if and only if it
corresponds to a pseudoprime ideal.
\end{proposition}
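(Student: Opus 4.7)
The plan is to reduce the first assertion to Proposition~\ref{pspecst}(4) and the irreducibility part to Proposition~\ref{TechnicalStatement}(5). For the bijection, I would first note that both directions are well defined: intersections of pseudoprimes are automatically radical $\Sigma$-ideals, and for any $\frak t\in\PRad A$ the set $V(\frak t)$ is closed by the definition of the topology. The identity $\bigcap_{\frak q\in V(E)}\frak q=\frak r([E])$ proved in Proposition~\ref{pspecst}(4), combined with $V(\frak r([E]))=V(E)$ from Proposition~\ref{pspectop}(1), then shows at once that the two assignments are mutually inverse: applied with $E=\frak t$ it yields $\bigcap_{\frak q\in V(\frak t)}\frak q=\frak t$, and in the other direction it yields $V\bigl(\bigcap_{\frak q\in V(E)}\frak q\bigr)=V(\frak r([E]))=V(E)$.

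For the irreducibility characterization, the easy direction is the ``only if''. If $\frak t$ is pseudoprime and $V(\frak t)=V(\frak a)\cup V(\frak b)=V(\frak a\cap \frak b)$ for some $\frak a,\frak b\in\PRad A$, then by the bijection $\frak t=\frak a\cap \frak b\supseteq \frak a\frak b$, so Proposition~\ref{TechnicalStatement}(5) gives $\frak a\subseteq \frak t$ or $\frak b\subseteq \frak t$; combined with the reverse inclusion this yields $V(\frak a)=V(\frak t)$ or $V(\frak b)=V(\frak t)$, proving irreducibility.

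The converse is where the Noetherian-type hypothesis enters. Write $\frak t=\frak p_1\cap\ldots\cap\frak p_n$ as a finite irredundant intersection of prime ideals. Since $\frak t$ is $\Sigma$-stable and each $\sigma$ maps primes to primes, the group $\Sigma$ permutes the minimal primes over $\frak t$, and these primes partition into finitely many $\Sigma$-orbits $\mathcal O_1,\ldots,\mathcal O_k$. Setting $\frak q_j=\bigcap_{\frak p\in\mathcal O_j}\frak p$, the finiteness of $\Sigma$ makes $\frak q_j$ coincide with $\bigcap_{\sigma\in\Sigma}\frak p^\sigma=\frak p_\Sigma$ for any representative $\frak p\in\mathcal O_j$, which is pseudoprime by the very definition of $\PSpec A$ and the surjectivity of $\pi\colon\Spec A\to\PSpec A$. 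Then $\frak t=\frak q_1\cap\ldots\cap\frak q_k$, and iterating Proposition~\ref{pspectop}(4) gives $V(\frak t)=V(\frak q_1)\cup\ldots\cup V(\frak q_k)$. Irreducibility forces $V(\frak t)=V(\frak q_j)$ for some $j$, and the bijection of the first part gives $\frak t=\frak q_j$, which is pseudoprime.

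The main obstacle I anticipate is the bookkeeping in this last direction: one must confirm that $\Sigma$ genuinely acts on the (finite) set of minimal primes of $\frak t$ and that each orbit intersection is pseudoprime in the formal sense introduced in Section~\ref{sec2}. Both facts hinge on the standing assumption of Section~\ref{sec4} that $\Sigma$ is finite, so that the orbit intersection is a finite intersection equal to $\frak p_\Sigma$; without this assumption the finite decomposition into pseudoprimes, and hence the argument, breaks down.
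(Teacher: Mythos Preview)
Your argument is correct and follows the paper's route exactly: the bijection via Proposition~\ref{pspecst}(4), irreducibility of $V(\frak q)$ via Proposition~\ref{TechnicalStatement}(5), and the converse via the $\Sigma$-orbit decomposition of the minimal primes over $\frak t$ (the paper phrases this last step contrapositively, but the content is identical). One small correction to your closing remark: that $\frak p_\Sigma\in\PSpec A$ for any prime $\frak p$ is just the well-definedness of $\pi\colon\Spec A\to\PSpec A$ recorded in Section~\ref{sec2} and holds for arbitrary $\Sigma$; what the argument actually uses is only the hypothesis that $\frak t$ has finitely many minimal primes, not the finiteness of $\Sigma$.
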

\begin{proof}

The first statement follows from Proposition~\ref{pspecst}~(4). Let
us show that irreducible sets correspond to pseudoprime ideals.

Let $\frak q$ be a pseudoprime ideal and let $V(\frak q)=V(\frak
a)\cup V(\frak b)=Ì(\frak a\cap\frak b)$. Then $\frak
q\supseteq\frak a\cap\frak b$. Thus, either  $\frak q\supseteq\frak
a$, or $\frak q\supseteq\frak b$ (see
Proposition~\ref{pspectop}~(4)). Suppose that the first condition
holds. Then $V(\frak q)\subseteq V(\frak a)$. The other inclusion
holds because of the choice of $V(\frak a)$.

Conversely, let $\frak t$ be a radical difference ideal. Suppose
that $\frak t$ is not pseudoprime. Let $\frak p_1\ldots,\frak p_n$
be all minimal prime ideals containing $\frak t$. Then the action of
$\Sigma$ on this set is not transitive. Thus, the ideals
$$
\frak t_i=\bigcap_{\sigma\in\Sigma}\frak p_i^\sigma
$$
contain $\frak t$ and do not coincide with $\frak t$. Let $\frak
t_1,\ldots,\frak t_s$ be all different ideals among $\frak t_i$.
Then $\frak t =\cap_i \frak t_i$ is a nontrivial decomposition of
the ideal $\frak t$.
\end{proof}

\subsection{Inheriting of properties}\label{sec42}

Let $f\colon A\to B$ be a difference homomorphism of difference
rings. We shall consider the following pairs of properties:
\begin{description}
\item{({\bf A1}):} is a property of $f$, where $f$ is considered as a homomorphism
\item{({\bf A2}):} is a property of $f$, where $f$ is considered as a difference homomorphism
\end{description}
such that ({\bf A1}) implies ({\bf A2}). The idea is the following:
finding such pair of properties, we shall reduce the difference
problem to a non difference one.

The homomorphism $f\colon A\to B$ is said to have the going-up
property if for every chain of prime ideals $$\frak p_1\subseteq
\frak p_2\subseteq\ldots\subseteq\frak p_n$$ in $A$ and every chain
of prime ideals $$\frak q_1\subseteq \frak
q_2\subseteq\ldots\subseteq\frak q_m$$ in $B$ such that $0<m<n$ and
$\frak q^c_i=\frak p_i$ ($1\leqslant i\leqslant m$) the second chain
can be extended to a chain  $$\frak q_1\subseteq \frak
q_2\subseteq\ldots\subseteq\frak q_n$$ with condition $\frak
q^c_i=\frak p_i$ ($1\leqslant i\leqslant n$).

The homomorphism $f\colon A\to B$ is said to have the going-down
property if for every chain of prime ideals $$\frak p_1\supseteq
\frak p_2\supseteq\ldots\supseteq\frak p_n$$ in $A$ and every chain
of prime ideals $$\frak q_1\supseteq \frak
q_2\supseteq\ldots\supseteq\frak q_m$$ in $B$ such that $0<m<n$ and
$\frak q^c_i=\frak p_i$ ($1\leqslant i\leqslant m$), the second
chain can be extended to a chain $$\frak q_1\supseteq \frak
q_2\supseteq\ldots\supseteq\frak q_n$$ with condition $\frak
q^c_i=\frak p_i$ ($1\leqslant i\leqslant n$).

Let $f\colon A\to B$ be a difference homomorphism. This homomorphism
is said to have going-up (going-down) property for difference ideals
if the mentioned above properties hold for the chains of pseudoprime
ideals.

\begin{proposition}
For every difference homomorphism $f\colon A\to B$ the following
holds
\begin{enumerate}
\item In the following diagram
$$
\xymatrix{
    \Spec B\ar[r]^{f^*}\ar[d]^{\pi}&\Spec A\ar[d]^{\pi}\\
    \PSpec B\ar[r]^{f^*_\Sigma}&\PSpec A\\
}
$$
if $f^*$ is surjective, then $f^*_{\Sigma}$ is surjective.
\item $f$ has the going-up property $\Rightarrow$ $f$
has the going-up property for difference ideals.
\item $f$ has the going-down property $\Rightarrow$ $f$
has the going-down property for difference ideals.
\end{enumerate}
\end{proposition}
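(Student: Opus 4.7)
Part (1) is almost immediate. Given $\frak q\in\PSpec A$, surjectivity of $\pi\colon\Spec A\to\PSpec A$ (recalled in Section~\ref{sec2}) lets me lift $\frak q$ to a prime $\frak p$ of $A$, and surjectivity of $f^*$ produces $\frak Q\in\Spec B$ with $f^*(\frak Q)=\frak p$. Commutativity of the square relating $\pi$ and $f^*$ then yields $f^*_{\Sigma}(\pi(\frak Q))=\pi(f^*(\frak Q))=\pi(\frak p)=\frak q$.

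For (2) the plan is: lift the given pseudoprime chains to ordinary prime chains, invoke the ordinary going-up hypothesis for $f$, and push the output back through $\pi$. Suppose $\frak a_1\subseteq\cdots\subseteq\frak a_n$ in $A$ and $\frak b_1\subseteq\cdots\subseteq\frak b_m$ in $B$ are pseudoprime chains with $f^{-1}(\frak b_i)=\frak a_i$ for $i\leqslant m$. First I build a prime chain $\frak Q_1\subseteq\cdots\subseteq\frak Q_m$ in $B$ with $\pi(\frak Q_i)=\frak b_i$: pick $\frak Q_m$ a minimal prime over $\frak b_m$, then descend by choosing $\frak Q_{i-1}\subseteq\frak Q_i$ to be a prime minimal over $\frak b_{i-1}$ within the interval $[\frak b_{i-1},\frak Q_i]$ (a Zorn argument supplies this, and such a prime is then automatically minimal over $\frak b_{i-1}$ in all of $B$). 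By Proposition~\ref{techbasic}(1) each $\frak Q_i$ is $\Sigma$-associated with $\frak b_i$, so $\pi(\frak Q_i)=\frak b_i$. Setting $\frak P_i:=f^{-1}(\frak Q_i)$ then produces a prime chain in $A$ with $\pi(\frak P_i)=\frak a_i$ by commutativity.

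It remains to extend $\frak P_1\subseteq\cdots\subseteq\frak P_m$ to a prime chain $\frak P_1\subseteq\cdots\subseteq\frak P_n$ in $A$ satisfying $\pi(\frak P_i)=\frak a_i$, and then to apply ordinary going-up. The extension step is the one place where finiteness of $\Sigma$ is crucial: given $\frak P_m$ with $\pi(\frak P_m)=\frak a_m$, Proposition~\ref{TechnicalStatement}(2) gives $\frak a_m=\bigcap_{\sigma\in\Sigma}\frak P_m^{\sigma}$; any minimal prime $\frak P'$ over $\frak a_{m+1}$ contains $\frak a_m$, hence contains this \emph{finite} intersection, and so must contain $\frak P_m^{\sigma}$ for some $\sigma$. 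Then $(\frak P')^{\sigma^{-1}}\supseteq\frak P_m$, and $\pi((\frak P')^{\sigma^{-1}})=\pi(\frak P')=\frak a_{m+1}$ since $\pi$ is $\Sigma$-invariant, so I take $\frak P_{m+1}:=(\frak P')^{\sigma^{-1}}$ and iterate. Finally, the ordinary going-up hypothesis extends $\frak Q_1\subseteq\cdots\subseteq\frak Q_m$ in $B$ to $\frak Q_1\subseteq\cdots\subseteq\frak Q_n$ with $f^{-1}(\frak Q_i)=\frak P_i$, and the chain $\pi(\frak Q_1)\subseteq\cdots\subseteq\pi(\frak Q_n)$ extends the $\frak b$-chain and satisfies $f^{-1}(\pi(\frak Q_i))=\pi(\frak P_i)=\frak a_i$.

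Part (3) is the mirror image with all inclusions reversed. The only substantive change is the inductive extension in $A$: given $\frak P_m\supseteq\frak a_m\supseteq\frak a_{m+1}$, I pick a prime minimal over $\frak a_{m+1}$ in the interval $[\frak a_{m+1},\frak P_m]$, which is automatically $\Sigma$-associated with $\frak a_{m+1}$ by Proposition~\ref{techbasic}(1); no $\sigma^{-1}$-translate is needed. The main obstacle in both parts is therefore the same—making the prime chain in $A$ lie above the prime chain chosen in $B$—and it is exactly the $\sigma^{-1}$-translation trick, permitted because $\Sigma$ is finite, that overcomes it in the going-up case.
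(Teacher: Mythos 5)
Your proof is correct, and it rests on the same two pillars as the paper's: Proposition~\ref{techbasic}(1) (minimal primes over a pseudoprime are $\Sigma$-associated with it, so $\pi$ sends them where you want) and the finiteness of $\Sigma$, which lets you pass from $\bigcap_{\sigma}\frak p^{\sigma}\subseteq\frak p'$ to $\frak p^{\sigma}\subseteq\frak p'$ for a single $\sigma$ via~\cite[chapter~1, sec.~6, prop.~1.11]{AM}. The organization differs, though. The paper works only with chains of length two (one step of the extension), chooses a prime $\frak p'_1$ $\Sigma$-associated with $\frak q'_1$ in $B$ and an \emph{independent} prime $\frak p_2$ associated with $\frak q_2$ in $A$, and then applies the group translation to the $B$-side prime, replacing $\frak p'_1$ by $(\frak p'_1)^{\sigma}$ so that its contraction $\frak p_1^{\sigma}$ lies below $\frak p_2$; ordinary going-up is then invoked for that single step, and $(\frak p'_2)_{\Sigma}$ is the answer. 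You instead fix everything globally: you first realize the whole pseudoprime chain in $B$ as a genuinely nested chain of minimal primes (the interval-Zorn argument, which the paper never needs since it has only one $B$-side ideal), contract it, and then extend the resulting $A$-side prime chain, applying the translation to the \emph{new} $A$-side minimal prime $(\frak P')^{\sigma^{-1}}$ so that it sits above the already-fixed $\frak P_m$; only at the end do you invoke ordinary going-up once for the full chain. The two translation moves are dual to one another and both are legitimate; your version has the small advantage of treating arbitrary chain lengths in one pass rather than implicitly inducting on length-two steps, at the cost of the extra lemma that a pseudoprime chain in $B$ lifts to a nested prime chain. Part (3) mirrors correctly, and your observation that no translation is needed there (because the new $A$-side prime is chosen \emph{inside} $\frak P_m$, where minimality within the interval already forces $\Sigma$-association) is accurate and matches why the paper's going-down argument is also slightly simpler than its going-up argument.
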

\begin{proof}
(1) This property follows from the fact that $\pi$ is surjective.

(2). Let $\frak q_1\subseteq\frak q_2$ be a chain of pseudoprime
ideals of $A$ and let $\frak q'_1$ be a pseudoprime ideal in $B$
contracting to $\frak q_1$. Consider a prime ideal $\frak p'_1$
$\Sigma$-associated with $\frak q'_1$. The contraction of $\frak
p'_1$ to $A$ will be denoted by $\frak p_1$. Then $\frak p_1$ will
be $\Sigma$-associated with $\frak q_1$. Let $\frak p_2$ be a prime
ideal $\Sigma$-associated with $\frak q_2$. Then $\cap_\sigma \frak
p^\sigma_1=\frak q_1\subseteq\frak p_2$. Thus, it follows
from~\cite[chapter~1, sec.~6, prop.~1.11(2)]{AM} that for some
$\sigma$ we have $\frak p^\sigma_1\subseteq\frak p_2$. Consider two
chains of prime ideals $\frak p^\sigma_1\subseteq\frak p_2$ in $A$
and $(\frak p'_1)^\sigma$ in $B$. From the going-up property there
exists a prime ideal  $\frak p'_2$ such that $(\frak
p'_1)^\sigma\subseteq\frak p'_2$ and $(\frak p'_2)^c=\frak p_2$.
Therefore, the ideal $(\frak p'_2)_\Sigma$ is the desired
pseudoprime ideal.

(3). Let $\frak q_1\supseteq\frak q_2$ be a chain of pseudoprime
ideals in $A$, and let $\frak q'_1$ be a pseudoprime ideal in $B$
contracting to $\frak q_1$. Let $\frak p'_1$ be a prime ideal
$\Sigma$-associated with $\frak q'_1$. Its contraction to $A$ will
be denoted by $\frak p_1$. Then $\frak p_1$ is $\Sigma$-associated
with $\frak q_1$. Let $\frak p$ be a prime ideal $\Sigma$-associated
with $\frak q_2$. Then  $\cap_\sigma \frak p^\sigma=\frak
q_2\subseteq\frak p_1$. Consequently, for some $\sigma$ we have
$\frak p^\sigma\subseteq \frak p$ (see.~\cite[chapter~1, sec.~6,
prop.~1.11(2)]{AM}). The going-down property guaranties that there
exists a prime ideal $\frak p'_2$ with conditions $\frak
p'_2\subseteq\frak p'_1$ and $(\frak p'_2)^c=\frak p^\sigma$. Then
the ideal  $(\frak p'_2)_\Sigma$ is the desired one.
\end{proof}

Since not for every multiplicatively closed set $S$ the fraction
ring is a difference ring we need to generalize the previous
proposition. Let $f\colon A\to B$ be a difference homomorphism and
let $X$ and $Y$ be subsets of the pseudospectra of $A$ and $B$,
respectively, such that $f^*_\Sigma(Y)\subseteq X$. We shall say
that that the going-up property holds for $f^*_\Sigma\colon Y\to X$
if for every chain of pseudoprime ideals $$\frak p_1\subseteq \frak
p_2\subseteq\ldots\subseteq\frak p_n$$ in $X$ and every chain of
pseudoprime ideals $$\frak q_1\subseteq \frak
q_2\subseteq\ldots\subseteq\frak q_m$$ in $Y$ such that $0<m<n$ and
$\frak q^c_i=\frak p_i$ ($1\leqslant i\leqslant m$), the second
chain can be extended to a chain  $$\frak q_1\subseteq \frak
q_2\subseteq\ldots\subseteq\frak q_n$$ in $Y$ with condition $\frak
q^c_i=\frak p_i$ ($1\leqslant i\leqslant n$).

We shall say that that the going-down property holds for
$f^*_\Sigma\colon Y\to X$ if for every chain of pseudoprime ideals
$$\frak p_1\supseteq \frak p_2\supseteq\ldots\supseteq\frak p_n$$ in
$X$ and every chain of pseudoprime ideals $$\frak q_1\supseteq \frak
q_2\supseteq\ldots\supseteq\frak q_m$$ in $Y$ such that $0<m<n$ and
$\frak q^c_i=\frak p_i$ ($1\leqslant i\leqslant m$), the second
chain can be extended to a chain  $$\frak q_1\supseteq \frak
q_2\supseteq\ldots\supseteq\frak q_n$$ in $Y$ with condition $\frak
q^c_i=\frak p_i$ ($1\leqslant i\leqslant n$). Now we shall prove
more delicate result.

\begin{proposition}\label{inher}
Let $f\colon A\to B$ be a difference homomorphism of difference
rings. The pseudospectrum of $A$ will be denoted by $X$ and the
pseudospectrum of $B$ by $Y$. Then the following holds:
\begin{enumerate}
\item Let for some $s\in A$ and $u\in B$ the mapping
$$
f^*\colon \Spec B_{su}\to \Spec A_s
$$
be surjective. Then the mapping  $f^*_\Sigma\colon Y_{f(s)u}\to X_s$
is surjective.
\item Let for some $s\in A$ the mapping
$$
f^*_s\colon \Spec B_s\to \Spec A_s
$$
have the going-up property. Then the mapping $f^*_\Sigma\colon
Y_{f(s)}\to X_s$ has the going-up property.
\item Let for some $s\in A$ and $u\in B$ the mapping
$$
f^*\colon \Spec B_{su}\to \Spec A_s
$$
have the going-down property. Then the mapping $f^*_\Sigma\colon
Y_{f(s)u}\to X_s$ has the going-down property.
\end{enumerate}
\end{proposition}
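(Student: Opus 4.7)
The three parts share a common strategy: lift each pseudoprime ideal to a genuine prime via $\Sigma$-association, arrange these primes into a chain inside an ordinary spectrum of a localization, invoke the commutative hypothesis there, and then project back to $\PSpec$ by applying $(\cdot)_\Sigma$. Three ingredients recur. First, Proposition~\ref{techbasic}~(1) lets one move freely between pseudoprime ideals and their $\Sigma$-associated primes. Second, the identity $\frak q=\bigcap_{\sigma\in\Sigma}\frak p^\sigma$ for any $\Sigma$-associated prime $\frak p$ of $\frak q$ gives, when combined with prime avoidance in the form~\cite[chapter~1, sec.~6, prop.~1.11(2)]{AM}, a way to align two primes via a single $\sigma\in\Sigma$. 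Third, $(\frak p')_\Sigma$ is automatically pseudoprime for any prime $\frak p'$ of $B$, being the largest difference ideal disjoint from the multiplicative set $B\setminus\frak p'$.

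For part~(1), given $\frak q\in X_s$ pick a $\Sigma$-associated prime $\frak p$ of $\frak q$. Since $\bigcap_\sigma\frak p^\sigma=\frak q$ does not contain $s$, some translate $\frak p^\sigma$ avoids $s$; replacing $\frak p$ by this translate places it in $\Spec A_s$. The hypothesis supplies $\frak p'\in\Spec B_{su}$ with $(\frak p')^c=\frak p$, and $\frak q':=(\frak p')_\Sigma$ is then pseudoprime, lies in $Y_{f(s)u}$ because $\frak q'\subseteq\frak p'$ and $f(s)u\notin\frak p'$, and satisfies $(\frak q')^c=\bigcap_\sigma\frak p^\sigma=\frak q$ since $f$ commutes with $\Sigma$.

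For parts~(2) and~(3) an easy induction reduces matters to a one-step extension. In the going-up case, given a chain $\frak p_m\subseteq\frak p_{m+1}$ in $X_s$ and $\frak q_m$ lying over $\frak p_m$ in $Y_{f(s)}$, choose $\Sigma$-associated primes $\frak P'_m$ of $\frak q_m$ with $f(s)\notin\frak P'_m$ and $\frak P_{m+1}$ of $\frak p_{m+1}$ with $s\notin\frak P_{m+1}$; set $\frak P_m:=(\frak P'_m)^c$. The relation $\bigcap_\sigma\frak P_m^\sigma=\frak p_m\subseteq\frak P_{m+1}$ yields a $\sigma$ with $\frak P_m^\sigma\subseteq\frak P_{m+1}$, and the translated prime automatically inherits $s\notin\frak P_m^\sigma$. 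After replacing $\frak P_m,\frak P'_m$ by their $\sigma$-translates, the commutative going-up hypothesis for $\Spec B_s\to\Spec A_s$ produces $\frak P'_{m+1}\supseteq\frak P'_m$ above $\frak P_{m+1}$, and $\frak q_{m+1}:=(\frak P'_{m+1})_\Sigma$ is the desired pseudoprime; the inclusion $\frak q_m\subseteq\frak q_{m+1}$ holds because $\frak q_m$ is a difference ideal contained in $\frak P'_m\subseteq\frak P'_{m+1}$, hence in $(\frak P'_{m+1})_\Sigma$. Part~(3) is symmetric: one aligns $\frak P_{m+1}\subseteq\frak P_m$ by translating $\frak P_{m+1}$, uses $f(s)u\notin\frak P'_m$ to place it in $\Spec B_{su}$, applies the commutative going-down hypothesis there, and projects back by $(\cdot)_\Sigma$.

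The main technical obstacle is not any single step but the simultaneous bookkeeping: at each stage one must pick a $\Sigma$-translate of a prime that both aligns with a previously chosen prime into a chain \emph{and} continues to avoid the designated element ($s$ or $f(s)u$). The resolution is that the containment relation used for alignment already forces the desired element-avoidance on the translated prime, so the two requirements do not interfere. Once this observation is in place, the three assertions reduce to the corresponding classical commutative facts applied to $\Spec A_s$, $\Spec B_s$, and $\Spec B_{su}$ respectively.
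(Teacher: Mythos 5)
Your proposal is correct and follows essentially the same route as the paper's proof: lift pseudoprimes to $\Sigma$-associated primes avoiding the relevant element, align them into a chain via $\frak q=\bigcap_\sigma\frak p^\sigma$ and prime avoidance, apply the commutative hypothesis in the localized spectrum, and return via $(\cdot)_\Sigma$. The only cosmetic difference is that in part (1) the paper produces the needed $\Sigma$-associated prime as a minimal prime over $\frak q$ avoiding $s$ (via Proposition~\ref{techbasic}), while you obtain it by translating an arbitrary $\Sigma$-associated prime; your explicit remark that the alignment inclusion automatically preserves element-avoidance is a point the paper leaves implicit.
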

\begin{proof}

(1). Let $\frak q\in X_s$. Since $s\notin \frak q$ then there exists
a prime ideal $\frak p'$ such that  $\frak q\subseteq\frak p'$ and
$s\notin\frak p'$. Then there exists a minimal prime ideal $\frak p$
with this property. Consequently, $\frak p$ is $\Sigma$-associated
with $\frak q$. By the hypothesis there is a prime ideal  $\frak
p_1$ in $B$ not containing $f(s)u$ such that  $\frak p_1^c=\frak p$.
Therefore, the ideal $(\frak p_1)_\Sigma$  is the desired one.

(2). Let $\frak q_1\subseteq\frak q_2$ be a chain of pseudoprime
ideals of $A$ not containing $s$, and $\frak q'_1$ be a pseudoprime
ideal of $B$ not containing $f(s)$. Let  $\frak p'_1$ be a prime
ideal $\Sigma$-associated with $\frak q'_1$, and $\frak p_1$ is its
contraction to $A$. As in~(1) we shall find a prime ideal $\frak
p_2$ $\Sigma$-associated with $\frak q_2$ and not containing $s$.
Then $\cap_\sigma \frak p^\sigma_1=0\subseteq\frak p_2$. Thus, for
some $\sigma$ we have $\frak p^\sigma_1\subseteq \frak p_2$.
Consider the sequence of ideals  $\frak p^\sigma_1\subseteq \frak
p_2$ and ideal $(\frak p'_1)^\sigma$ contracting to $\frak
p^\sigma_1$. By the hypothesis there exists a prime ideal  $\frak
p'_2$ containing  $(\frak p'_1)^\sigma$ and contracting to  $\frak
p_2$. Then the ideal $(\frak p'_2)_\Sigma$ is the desired one.

(3). Let $\frak q_1\supseteq\frak q_2$  be a chain of pseudoprime
ideals in $A$ not containing $s$ and $\frak q'_1$  is a pseudoprime
ideal in $B$ contracting to $\frak q_1$. As in~(2) we shall find a
prime ideal $\frak p'_1$ $\sigma$-associated with $\frak q'_1$ and
not containing $f(s)u$. Its contraction will be denoted by $\frak
p_1$. Let $\frak p_2$ be a prime ideal $\Sigma$-associated with
$\frak q_2$. Then $\cap_\sigma \frak p_2^\sigma=0\subseteq\frak
p_1$. Thus, for some $\sigma$ we have $\frak
p_2^\sigma\subseteq\frak p_1$. By the hypothesis for the chain
$\frak p_1\supseteq\frak p_2^\sigma$ and the ideal $\frak p'_1$
there is a prime ideal $\frak p'_2$ laying in $\frak p'_1$ and
contracting to $\frak p_2^\sigma$. Then the ideal $(\frak
p'_2)_\Sigma$ is the desired one.
\end{proof}

\begin{example}
Let $\Sigma=\mathbb Z/2\mathbb Z$, where $\sigma=1$ is the nonzero
element of the group, and let $C$ be an algebraically closed field.
Let $A=C[x]$, where $\sigma$ coincides with the identity mapping on
$A$. Now consider the ring $B=C[t]$, where $\sigma(t)=-t$. There is
a difference embedding $\varphi\colon A\to B$ such that  $x\mapsto
t^2$. So, we can identify $A$ with the subring $C[t^2]$ in $B$.

Let $\dcspec B$ and $\dcspec A$ be the sets of all prime difference
ideals of the rings $B$ and $A$, respectively. It is clear that
$\dcspec B=\{0\}$ consists of one single point and $\dcspec A=\Spec
A$. The contraction mapping
$$
\varphi^*\colon \dcspec B\to\dcspec A
$$
maps the zero ideal to the zero ideal. We see that $\dcspec B$ is
dense in $\dcspec A$ but does not contain an open in its closure.
So, $\dcspec B$ is very poor.

Now let us show what will happen if we use pseudoprime ideals
instead of prime ones. It is clear that $\PSpec A=\Spec A$. Let us
describe $\PSpec B$. Consider the mapping $\pi\colon\Spec B\to
\PSpec B$. Every maximal ideal of $B$ is of the form $(t-a)$, then
$(t-a)_\Sigma=(t^2-a^2)$ is pseudoprime if $a\neq 0$. Therefore, the
set of all pseudoprime ideals is the following
$$
\PSpec B=\{0\}\cup\{(t)\}\cup\{\,(t^2-a)\mid 0\neq a\in C\,\}.
$$
We can identify pseudomaximal spectrum with an affine line $C$ by
the rule $(t^2-a)\mapsto a$ and $(t)\mapsto 0$. Now consider the
contraction mapping
$$
\varphi^*_{\Sigma}\colon \PSpec B\to \PSpec A.
$$
As we can see $\varphi^*_\Sigma(t^2-a)=(x-a)$ and
$\varphi^*_\sigma(t)=(x)$. Identifying pseudomaximal spectrum of $A$
with $C$ by the rule $(x-a)\mapsto a$, we see that the mapping
$\varphi^*_\Sigma\colon \PMax B\to \PMax A$ coincides with the
identity mapping. It is easy to see that the homomorphism
$\varphi\colon A\to B$ has the going-up and going-down properties.
Therefore, it has the going-up and the going-down properties for
difference ideals. But this is obvious from the discussion above.
Consequently, the mapping $\varphi^*_\Sigma$ is a homeomorphism
between $\PSpec A$ and $\PSpec B$.
\end{example}

\subsection{Pseudofields}\label{sec43}

An absolutely flat simple difference ring will be called a
pseudofield.

\begin{proposition}
For every pseudofield $A$ the group $\Sigma$ is transitively acting
on $\Max A$. Moreover, as a commutative ring  $A$ is isomorphic to
$K^n$, where $n$ is the number of all maximal ideals in $A$ and $K$
is isomorphic to $A/\frak m$, where $\frak m$ is a maximal ideal in
$A$.
\end{proposition}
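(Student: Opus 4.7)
The plan is to use the simplicity of $A$ together with the absolutely flat hypothesis to force the intersection of the $\Sigma$-orbit of any maximal ideal to be zero, and then apply the Chinese Remainder Theorem to recover the product decomposition.

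First I would record that an absolutely flat ring has Krull dimension zero and is reduced, so $\Spec A = \Max A$ and every element of $\PSpec A$ restricted through $\pi$ comes from a maximal ideal. Pick any $\frak m \in \Max A$ and form
$$
\frak m_\Sigma = \bigcap_{\sigma \in \Sigma} \frak m^\sigma.
$$
This is a difference ideal contained in the proper ideal $\frak m$, hence proper; since $A$ is simple, $\frak m_\Sigma = 0$.

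Next I would establish transitivity. If $\frak m' \in \Max A$ is arbitrary, then $\bigcap_\sigma \frak m^\sigma = 0 \subseteq \frak m'$. Since $\frak m'$ is prime and $\Sigma$ is finite, some factor $\frak m^\sigma$ lies in $\frak m'$, and by maximality $\frak m^\sigma = \frak m'$. In particular $\Max A$ is a single finite $\Sigma$-orbit, say $\{\frak m_1, \ldots, \frak m_n\}$ with $n \leqslant |\Sigma|$.

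Finally I would produce the isomorphism. The $\frak m_i$ are pairwise distinct maximal ideals, hence pairwise comaximal, and their intersection is $0$. The Chinese Remainder Theorem gives
$$
A \;\cong\; \prod_{i=1}^{n} A/\frak m_i.
$$
Each $A/\frak m_i$ is a field (maximal quotient), and for any $\sigma \in \Sigma$ with $\frak m_i^\sigma = \frak m_j$ the automorphism $\sigma\colon A \to A$ descends to a field isomorphism $A/\frak m_i \xrightarrow{\sim} A/\frak m_j$; transitivity of the action therefore makes all these residue fields isomorphic to a single field $K$, yielding $A \cong K^n$. The only substantive step is the reduction $\frak m_\Sigma = 0$ via simplicity — everything else is bookkeeping with CRT and the orbit structure.
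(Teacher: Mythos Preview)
Your proof is correct and follows essentially the same route as the paper's: both use simplicity to force $\bigcap_\sigma \frak m^\sigma = 0$, deduce transitivity from prime avoidance over the finite orbit, and then apply the Chinese Remainder Theorem to get $A \cong \prod_i A/\frak m_i$ with all residue fields isomorphic via the $\Sigma$-action. The only cosmetic difference is that you explicitly record reducedness and Krull dimension zero up front, whereas the paper invokes the absolutely flat hypothesis more tersely by citing \cite[chapter~3, ex.~11]{AM}.
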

\begin{proof}

Let $\frak m$ be a prime ideal of $A$. By the hypothesis this ideal
is simultaneously maximal and minimal (see.~\cite[chapter~3,
ex.~11]{AM}). Then $\cap_\sigma\frak m^\sigma$  is a difference
ideal and, thus, equals zero. Let $\frak n$ be an arbitrary prime
ideal of $A$ then $\cap_\sigma\frak m^\sigma=0\subseteq \frak n$.
Consequently, $\frak n=\frak m^\sigma$  for some $\sigma$, i.~e.,
$\Sigma$ acts transitively on $\Max A$. Let $\frak m_1,\ldots,\frak
m_n$  be the set of all maximal ideals of $A$. Then it follows
from~\cite[chapter~1, sec.~6, prop.~1.10]{AM} that $A$ is isomorphic
to $\prod_i A/\frak m_i$. Since every element of $\Sigma$ is an
isomorphism then for every $\sigma$ the field  $A/\frak m$ is
isomorphic to $A/\frak m^\sigma$.
\end{proof}

\begin{proposition}
Let $A$ be a difference ring and $\frak q$ be its difference ideal.
The ideal is pseudomaximal if and only if $A/\frak q$ is
pseudofield. In other words every simple difference ring is
absolutely flat.
\end{proposition}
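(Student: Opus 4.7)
My plan is to prove the two directions separately, with the forward direction being the substantive one. For the easy direction, if $A/\frak q$ is a pseudofield then it is simple as a difference ring, which by definition means $\frak q$ is maximal among difference ideals of $A$, i.e.\ pseudomaximal. The converse direction reduces to showing that, for pseudomaximal $\frak q$, the quotient $B = A/\frak q$ (which is simple by definition) is absolutely flat. I would handle this via the standard equivalence ``absolutely flat $\Leftrightarrow$ reduced and Krull dimension zero''.

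Reducedness is essentially immediate: a pseudomaximal ideal is pseudoprime (take $S = \{1\}$ in the definition), and Proposition~\ref{TechnicalStatement}(1) gives that pseudoprime ideals are radical, so $B$ is reduced.

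For the dimension statement, the goal is to show every prime ideal $\frak p$ of $A$ containing $\frak q$ is already maximal. The key step is to pick any maximal ideal $\frak n \supseteq \frak p$ and note that by Proposition~\ref{techbasic}(2) the ideal $\frak n_\Sigma = \bigcap_{\sigma \in \Sigma} \frak n^\sigma$ is pseudomaximal. Since $\frak q$ is a difference ideal with $\frak q \subseteq \frak n$, we get $\frak q \subseteq \frak n_\Sigma$, and pseudomaximality of $\frak q$ together with $\frak n_\Sigma \neq A$ forces $\frak q = \frak n_\Sigma$. Now $\bigcap_\sigma \frak n^\sigma = \frak q \subseteq \frak p$, and since $\Sigma$ is finite and $\frak p$ is prime, some $\frak n^\sigma \subseteq \frak p$; maximality of $\frak n^\sigma$ yields $\frak p = \frak n^\sigma$.

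The only real obstacle is this dimension argument, and its crux is exploiting finiteness of $\Sigma$ to present $\frak q$ as the finite intersection $\bigcap_\sigma \frak n^\sigma$, so that a prime avoidance style argument pins any prime over $\frak q$ down to one of the $\frak n^\sigma$. Once $B$ is known to be reduced and zero-dimensional, absolute flatness follows by localizing at any prime to obtain a reduced, zero-dimensional local ring, which must be a field.
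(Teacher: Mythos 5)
Your proof is correct, and its core is the same as the paper's: both hinge on picking a maximal ideal $\frak m\supseteq\frak q$ and observing that pseudomaximality forces $\frak q=\frak m_\Sigma=\bigcap_{\sigma\in\Sigma}\frak m^\sigma$, a finite intersection of maximal ideals. Where you diverge is the finish. The paper applies the Chinese Remainder Theorem to this finite intersection of pairwise comaximal maximal ideals and concludes directly that $A/\frak q\cong\prod_\sigma A/\frak m^\sigma$ is a finite product of fields, hence absolutely flat; this is shorter and also delivers the explicit structural description of pseudofields that the rest of Section~\ref{sec43} relies on. You instead verify the two halves of the characterization ``absolutely flat $\Leftrightarrow$ reduced and zero-dimensional'': reducedness from Proposition~\ref{TechnicalStatement}(1), and dimension zero by prime avoidance applied to $\bigcap_\sigma\frak n^\sigma=\frak q\subseteq\frak p$. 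That prime-avoidance step is exactly the content the paper gets for free from CRT, so your route is a little longer, but it is sound and arguably more robust in that it isolates precisely which finiteness is being used (finitely many conjugates of $\frak n$). One small remark: your appeal to Proposition~\ref{techbasic}(2) is unnecessary --- you only need that $\frak n_\Sigma$ is a proper difference ideal containing $\frak q$, which already forces $\frak n_\Sigma=\frak q$ by pseudomaximality.
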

\begin{proof}
If $A/\frak q$ is a pseudofield then $\frak q$ is a maximal
difference ideal and, hence, pseudomaximal. Conversely, let $\frak
q$ be pseudomaximal and $\frak m$ is a maximal ideal containing
$\frak q$. Since $\frak q$ is a maximal difference ideal, then
$\frak m$ is $\Sigma$-associated with $\frak q$. Hence, $\frak
q=\cap_\sigma \frak m^\sigma$. And it follows from~\cite[chapter~1,
sec.~6, prop.~1.10]{AM} that $A/\frak q=\prod_\sigma A/\frak
m^\sigma$.
\end{proof}

As we see a simple difference ring and a pseudofield are the same
notions. Note that the ring $\Fun A$ is a pseudofield if and only if
$A$ is a field. We shall introduce the notion of difference closed
pseudofield. Let $A$ be a pseudofield. Consider the ring of
difference polynomials $A\{y_1,\ldots,y_n\}$. Let $E\subseteq
A\{y_1,\ldots,y_n\}$ be an arbitrary subset. The set of all common
zeros of $E$ in $A^n$ will be denoted by $V(E)$. Conversely, let
$X\subseteq A^n$ be an arbitrary subset. The set of all polynomials
vanishing on $X$ will be denoted by $I(X)$. It is clear that for any
difference ideal $\frak a\subseteq A\{y_1,\ldots,y_n\}$ we have
$\frak r(\frak a)\subseteq I(V(\frak a))$. A pseudofield $A$ will be
said to be a difference closed pseudofield if for every $n$ and
every difference ideal $\frak a\subseteq A\{y_1,\ldots,y_n\}$ there
is the equality $\frak r(\frak a)= I(V(\frak a))$.

\begin{proposition}\label{weakcon}
If $A$ is a difference closed pseudofield, then every difference
finitely generated over $A$ pseudofield coincides with $A$.
\end{proposition}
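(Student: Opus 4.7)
The plan is to imitate the classical Weak Nullstellensatz argument. Since $B$ is a pseudofield and is $\Sigma$-finitely generated over $A$, I would write $B = A\{y_1,\ldots,y_n\}/\frak q$ with $\frak q$ a pseudomaximal difference ideal (it is pseudomaximal because $B$ is simple, and proper because $B \neq 0$). By Proposition~\ref{TechnicalStatement}(1), $\frak q$ is radical.

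Next I would invoke the hypothesis. Because $A$ is difference closed, $\frak r(\frak q) = I(V(\frak q))$, so combined with radicality we get $\frak q = I(V(\frak q))$. Since $\frak q$ is proper while $I(\emptyset) = A\{y_1,\ldots,y_n\}$, the zero set $V(\frak q) \subseteq A^n$ must contain a point $a = (a_1,\ldots,a_n)$.

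The point $a$ yields an evaluation $\Sigma$-homomorphism of $A$-algebras $\varepsilon_a \colon A\{y_1,\ldots,y_n\} \to A$, defined on the free generators by $\tau y_i \mapsto \tau(a_i)$; this is manifestly $\Sigma$-equivariant from the presentation $A\{Y\} = A[\Sigma Y]$. Since $a \in V(\frak q)$, the ideal $\frak q$ lies in $\ker \varepsilon_a$, and passing to the quotient produces a $\Sigma$-homomorphism of $A$-algebras $\bar\varepsilon_a \colon B \to A$. The kernel of $\bar\varepsilon_a$ is a difference ideal of $B$ not containing $1$, hence zero (since $B$ is simple), so $\bar\varepsilon_a$ is injective. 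On the other hand, the structure map $\iota \colon A \to B$ is injective as $A$ is simple and $B$ nonzero. The composition $\bar\varepsilon_a \circ \iota$ is an $A$-algebra map $A \to A$, hence the identity, forcing both $\iota$ and $\bar\varepsilon_a$ to be bijections; thus $B = A$.

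The only genuine input is the difference-closedness of $A$, used to produce the point $a$; everything else is formal simplicity-of-quotient and simplicity-of-$A$ bookkeeping. The step I would double-check with the most care is precisely the existence of $a$, i.e., that $I(V(\frak q))$ being proper forces $V(\frak q)$ nonempty, which is transparent but is exactly where the hypothesis enters.
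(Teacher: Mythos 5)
Your proof is correct and follows essentially the same route as the paper: both use difference-closedness to turn the properness of the pseudomaximal ideal $\frak q$ into a point $a\in V(\frak q)$, and then identify the quotient with $A$. The only cosmetic difference is that the paper concludes by noting $\frak q=I(a)=[y_1-a_1,\ldots,y_n-a_n]$, whereas you run the evaluation homomorphism through the simplicity of $B$ and of $A$; these are equivalent bits of bookkeeping.
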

\begin{proof}
Every difference finitely generated over $A$ pseudofield can be
presented as  $A\{y_1,\ldots,y_n\}/\frak q$, where $\frak q$ is a
pseudomaximal ideal. It is easy to see that the ideal $\frak q$ is
of the form $I(a)$ for some $a\in A^n$. Hence, $\frak
q=[y_1-a_1,\ldots,y_n-a_n]$. Therefore, $A\{y_1,\ldots,y_n\}/\frak
q$ coincides with $A$.
\end{proof}

\begin{proposition}\label{funcon}
A pseudofield $\Fun K$ is difference closed if and only if $K$ is
algebraically closed.
\end{proposition}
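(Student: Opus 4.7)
The plan is to establish the two directions separately. For the ``only if'' direction, assume $\Fun K$ is difference closed and let $p(x)\in K[x]$ be non-constant. Since $p$ has constant coefficients, the difference ideal $[p]$ in $\Fun K\{y\}$ is just the ordinary ideal generated by $p(\sigma y)$, $\sigma\in\Sigma$. I would show $[p]$ is proper by embedding $K\hookrightarrow\bar K$, picking a root $\alpha\in\bar K$ of $p$, and observing that $y\mapsto\alpha$ (viewed as a constant function in $\Fun\bar K$) yields a difference homomorphism $\Fun K\{y\}/[p]\to\Fun\bar K\neq 0$. The hypothesis $\frak r([p])=I(V([p]))$ then forces $V([p])\neq\emptyset$, and any zero of $p$ in $\Fun K$ is a function whose values are roots of $p$ in $K$, so $K$ must contain a root.

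For the ``if'' direction, fix a difference ideal $\frak a\subseteq\Fun K\{y_1,\dots,y_n\}$ and an element $g\notin\frak r(\frak a)$. The key observation is that since $\Sigma$ is finite and $\Fun K\cong K^{|\Sigma|}$, the polynomial ring $\Fun K\{y\}=\Fun K[\sigma y_i:\sigma\in\Sigma,\,i]$ decomposes purely as a commutative ring into a finite product $\prod_{\tau\in\Sigma}R_\tau$, each $R_\tau\cong K[z_{\sigma,i}]$ being a finitely generated $K$-algebra in $n|\Sigma|$ variables. Correspondingly $A:=\Fun K\{y\}/\frak a$ splits as $\prod A_\tau$, and non-nilpotence of the image of $g$ forces some component $\bar g_\tau$ to be non-nilpotent in $A_\tau$; the classical Hilbert Nullstellensatz over the algebraically closed field $K$ then produces a $K$-algebra homomorphism $\psi\colon A_\tau\to K$ with $\psi(\bar g_\tau)\neq 0$. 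After relabeling I may assume $\tau$ is the identity $e\in\Sigma$, and composing $\psi$ with the projection $A\to A_e$ yields a ring homomorphism $\varphi\colon A\to K$ with $\varphi|_{\Fun K}=\gamma_e$ and $\varphi(\bar g)\neq 0$.

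To turn this ordinary $K$-point into a difference-theoretic zero inside $\Fun K$ I apply Proposition~\ref{taylor} to $\varphi$ and obtain a difference homomorphism $\Phi_e\colon A\to\Fun K$ with $\gamma_e\circ\Phi_e=\varphi$; a direct computation from the defining formula $\Phi_e(a)(\tau)=\varphi(\tau^{-1}a)$ shows $\Phi_e$ restricts to the identity on $\Fun K\subseteq A$. Setting $a_i:=\Phi_e(\bar y_i)\in\Fun K$, every $p\in\frak a$ satisfies $p(a)=\Phi_e(p)=0$, so $a\in V(\frak a)$, while $g(a)=\Phi_e(\bar g)$ has value $\varphi(\bar g)\neq 0$ at $e$ and is therefore non-zero in $\Fun K$. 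Hence $g\notin I(V(\frak a))$, giving the nontrivial inclusion $I(V(\frak a))\subseteq\frak r(\frak a)$. The main delicacy is recognising that the finiteness of $\Sigma$ lets one split the difference polynomial ring into a product of ordinary polynomial rings over $K$ and so reduce to the commutative Nullstellensatz, with the Taylor homomorphism serving as the bridge that promotes a $K$-point of a single factor back to a true difference-theoretic solution in $\Fun K$.
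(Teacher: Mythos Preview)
Your proof is correct and, for the ``if'' direction, follows essentially the same strategy as the paper: produce an ordinary $K$-point via the Hilbert Nullstellensatz and then lift it to a difference point in $\Fun K$ using the Taylor homomorphism (Proposition~\ref{taylor}). The paper carries this out by localizing $B=(\Fun K)\{y\}/\frak a$ at the non-nilpotent element and picking a maximal ideal there, whereas you use the explicit idempotent decomposition $\Fun K\{y\}\cong\prod_\tau R_\tau$; these are equivalent ways of isolating a single $K$-algebra factor. One small point: your ``relabeling'' to force $\tau=e$ is not quite legitimate as stated, since $e$ is the canonical identity of $\Sigma$ and $\tau$ is determined by $g$. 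The clean fix is simply to apply Proposition~\ref{taylor} at $\tau$ rather than at $e$ (this is exactly what the paper does, allowing the restriction to $\Fun K$ to be $\gamma_\sigma$ for whichever $\sigma$ arises); the verification that $\Phi_\tau|_{\Fun K}=\mathrm{id}$ goes through identically with $\tau$ in place of $e$.

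For the ``only if'' direction your argument differs from the paper's in a minor but pleasant way. The paper first passes to the quotient $R=\Fun K\{y\}/[\sigma y-y:\sigma\in\Sigma]\cong(\Fun K)[y]$, so that any solution is automatically $\Sigma$-invariant and hence lies in $K$, and then invokes Proposition~\ref{weakcon} to identify a simple quotient with $\Fun K$. You instead work with $[p]$ directly in $\Fun K\{y\}$, establish properness via the auxiliary map to $\Fun\bar K$, use the defining equality $\frak r([p])=I(V([p]))$ to force $V([p])\neq\emptyset$, and read off a root of $p$ in $K$ from any coordinate of a zero $a\in\Fun K$. Your route avoids the detour through Proposition~\ref{weakcon} and is arguably more direct; the paper's route has the advantage of fitting into the chain of equivalences assembled in Theorem~\ref{equtheor}.
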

\begin{proof}
Let $\Fun K$ be difference closed. We recall that $K$ can be
embedded into $\Fun K$ as the subring of the constants. Consider the
ring
$$
R=\Fun K\{y\}/(\ldots,\sigma y-y,\ldots)_{\sigma\in\Sigma}.
$$
As a commutative ring it is isomorphic to $\Fun K[y]$. Let $f$ be a
polynomial in one variable with coefficients in $K$. The ideal
$(f(y))$ is a nontrivial ideal in $\Fun K[y]$. Moreover, since
$f(y)$ is an invariant element, the mentioned ideal is difference.
Consequently, $B=R/(f(y))$ is a nontrivial difference ring. Let
$\frak m$ be a pseudomaximal ideal in $B$. Then the pseudofield
$B/\frak m$ coincides with $\Fun K$ because of the previous
proposition. Denote the image of the element $y$ in $\Fun K$ by $t$.
By the definition $f(t)=0$ and $t$ is invariant. Thus, $t$ is in
$K$. So, $K$ is algebraically closed.

Conversely, let $K$ be an algebraically closed field. Let $\frak a$
be an arbitrary difference ideal in $\Fun K\{y_1,\ldots,y_n\}$.
Consider the algebra
$$
B=\Fun K\{y_1,\ldots,y_n\}/\frak a.
$$
We shall show that for every element $s\in B$ not belonging to the
nilradical there is a difference homomorphism $f\colon B\to\Fun K$
over $\Fun K$ such that $f(s)\neq0$. From Proposition~\ref{taylor}
it suffices to find a homomorphism $\psi\colon B\to K$ such that for
some $\sigma$ the following diagram is commutative
$$
\xymatrix@R=10pt{
    B\ar[rd]^{\psi}&\\
    \Fun K\ar[r]^{\gamma_\sigma}\ar[u]&K
}
$$
Indeed, consider the ring $B_s$ and let $\frak n$ be a maximal ideal
of $B_s$. Then $B_s/\frak n$  is a finitely generated algebra over
$K$ and is a field. Therefore, $B_s/\frak n$ coincides with $K$ (see
the Hilbert Nullstellensatz) and this quotient mapping gives us the
homomorphism $\psi\colon B\to K$. Let $\frak m = \Fun K\cap \frak
n$, then $\frak m$ coincides with the ideal $\ker \gamma_\sigma$ for
some $\sigma\in \Sigma$. So, the restriction of $\psi$ onto $\Fun K$
coincides with $\gamma_\sigma$.
\end{proof}

\begin{proposition}\label{singdef}
Let $A$ be a pseudofield. Suppose that every difference generated
over $A$ by one single element pseudofield coincides with $A$. Then
the Taylor homomorphism is an isomorphism between $A$ and $\Fun K$,
where $K=A/\frak m$  for every maximal ideal $\frak m$ of $A$.
\end{proposition}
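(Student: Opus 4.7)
My plan is to show the Taylor homomorphism $\Phi\colon A\to\Fun K$ (at the identity of the group) is both injective and surjective. Injectivity is immediate: $\ker\Phi$ is a difference ideal of the simple difference ring $A$ and does not contain $1$, so it vanishes. The substantive work is surjectivity, and the strategy will be to prove that for an arbitrary $f\in\Fun K$ the difference $A$-subalgebra $B:=A\{f\}\subseteq\Fun K$ already coincides with $A$, forcing $f\in\Phi(A)$.

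First I will record two preliminaries about $\Fun K$. It is itself a pseudofield with $(\Fun K)^\Sigma=K$ (the constant functions): it is absolutely flat as $K^\Sigma$, and $\Sigma$ acts transitively on itself by left translation, so the only $\Sigma$-stable ideals of $K^\Sigma$ are $0$ and the whole ring. Moreover $\Fun K$ is finite over $\Phi(A)$: it is finite over $K$, which is finite over $K^H$ (where $H$ is the stabilizer of $\frak m$ in $\Sigma$), and $K^H$ is precisely the image $\Phi(A^\Sigma)$ of the constants of $A$.

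Next I will analyze $B=A\{f\}$. Since $A$ is Artinian (hence Noetherian) and $\Fun K$ is a finite $A$-module, $B$ is a finite $A$-algebra. As a subring of the reduced Artinian ring $\Fun K$, $B$ is itself reduced and Artinian, hence absolutely flat: $B\cong\prod_{i=1}^r F_i$ with each $F_i\hookrightarrow K$. For every pseudomaximal $\frak q\subset B$ the quotient $B/\frak q$ is a pseudofield; the simple ring $A$ embeds into it, and $B/\frak q$ is difference-generated over this copy of $A$ by the image of $f$. The hypothesis therefore gives $B/\frak q=A$. Writing the $\Sigma$-orbits on $\Max B$ as $O_1,\dots,O_s$, this forces $|O_j|=n:=|\Max A|$ and $F_i\cong K$ for every $i$, and the retractions $r_j\colon B\to B/\frak q_j=A$ are difference $A$-algebra maps. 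Assembling them yields a difference $A$-algebra map $\alpha\colon B\to A^s$, injective because $\bigcap_j\frak q_j=\bigcap_i\frak n_i=0$ in the reduced ring $B$, and surjective by the Chinese Remainder Theorem because distinct orbits are disjoint, so the $\frak q_j$ are pairwise comaximal in $\prod_i F_i$. Thus $\alpha$ is an isomorphism of difference $A$-algebras.

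The main obstacle, and crux of the proof, will be the concluding step of extracting $s=1$ from the rigidity of $\Fun K$. Under $\alpha^{-1}$ the $s$ orthogonal nonzero $\Sigma$-invariant idempotents $(0,\dots,1,\dots,0)$ of $A^s$ produce $s$ pairwise orthogonal nonzero $\Sigma$-invariant idempotents of $B$, which sit inside $\Fun K$. But $(\Fun K)^\Sigma=K$ is a field, whose only idempotents are $0$ and $1$; since each of these $s$ idempotents is nonzero, each equals $1$, and for $s\geq 2$ their pairwise products would be $1\neq 0$, contradicting orthogonality. Hence $s=1$, $B=A$, and $f\in\Phi(A)$; as $f$ was arbitrary, $\Phi$ is surjective, completing the isomorphism.
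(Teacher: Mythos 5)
Your proof is correct, but it takes a genuinely different route from the paper's. The paper argues top--down: given $\eta\in\Fun K\setminus\Phi(A)$, it forms the difference polynomial ring $A\{y\}$, contracts the maximal ideal $(\ldots,\sigma y-\eta(\sigma),\ldots)$ of $K[\ldots,\sigma y,\ldots]$ to a maximal ideal $\frak m'$ of $A\{y\}$, passes to the pseudomaximal ideal $\frak n=\frak m'_\Sigma$ via Proposition~\ref{techbasic}, invokes the hypothesis to conclude $A\{y\}/\frak n=A$, and then uses the existence \emph{and uniqueness} clauses of Proposition~\ref{taylor} to map this pseudofield back into $\Fun K$ with $y\mapsto\eta$, forcing $\eta\in\Phi(A)$ --- a contradiction. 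You instead work bottom--up inside $\Fun K$ with the subring $B=A\{f\}$, use the Artinian structure theory to write $B$ as a finite product of fields whose pseudofield quotients all equal $A$ by the hypothesis, and then eliminate the possibility of more than one pseudomaximal ideal by observing that $(\Fun K)^\Sigma=K$ is a field and hence admits no nontrivial family of orthogonal invariant idempotents. Your route must confront a point the paper's construction sidesteps --- that $B/\frak q\cong A$ for \emph{every} pseudomaximal $\frak q$ does not by itself yield $B=\Phi(A)$ --- and the idempotent argument is exactly the right tool; in exchange you avoid the uniqueness of the Taylor homomorphism. Two assertions you lean on without proof are true but deserve a line each: that $\pi(A^\Sigma)=K^H$, which you need so that $\Fun K$ is module-finite over $\Phi(A)$ (alternatively, each composite $\gamma_\sigma\circ\Phi\colon A\to K$ is already surjective, so the $|\Sigma|$ coordinate idempotents generate $\Fun K$ as a $\Phi(A)$-module), and that the pseudomaximal ideals of the Artinian ring $B$ are precisely the intersections $\bigcap_{\frak n\in O}\frak n$ over the $\Sigma$-orbits $O$ on $\Max B$.
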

\begin{proof}
Let $\frak m$ be a maximal ideal of $A$. Consider the field
$K=A/\frak m$ and define the ring $\Fun K$. It follows from
Proposition~\ref{taylor} that there exists a difference homomorphism
$\Phi\colon A\to \Fun K$ for the quotient homomorphism $\pi\colon
A\to K$.
$$
\xymatrix{
    &\Fun K\ar[d]^{\gamma_e}\\
    A\ar[ur]^{\Phi}\ar[r]^{\pi}&K
}
$$

Since $A$ is a simple difference ring $\Phi$ is injective. Let us
show that $\Phi$ is surjective. Assume that contrary holds and there
is an element $\eta\in\Fun K\setminus A$. Consider  the ring
$A\{y\}=A[\ldots,\sigma y,\ldots]$ and its quotient ring
$K[\ldots,\sigma y,\ldots]$. The ideal $(\ldots,\sigma
y-\eta(\sigma),\ldots)$ is maximal in the latter ring. This ideal
contracts to the maximal ideal  $\frak m'$ in $A\{y\}$. It follows
from Proposition~\ref{techbasic} that the ideal $\frak n=\frak
m'_\Sigma$ is pseudomaximal. So,  $A\{y\}/\frak n$ is a pseudofield
difference generated over $A$ by one singly element. Thus,
$A\{y\}/\frak n$ coincides with $A$. On the other hand, the
following is a homomorphism
$$
\varphi\colon A\{y\}/\frak n\to A\{y\}/\frak m'= K[\ldots,\sigma
y,\ldots]/(\ldots,\sigma y-\eta(\sigma),\ldots)=K.
$$
The restriction of this homomorphism to $A$ coincides with the
quotient homomorphism $\pi$. Proposition~\ref{taylor} guaranties
that there is a difference embedding $\Psi\colon A\{y\}/\frak n\to
\Fun K$. It follows from the uniqueness of the Taylor homomorphism
that the restriction of the last mapping to $A$ coincides with
$\Phi$.
$$
\xymatrix{
    A\{y\}/\frak n\ar[r]^{\Psi}\ar[rd]^{\varphi}&\Fun K\ar[d]^{\gamma_e}\\
    A\ar[r]^{\pi}\ar[u]^{Id}&K
}
$$
From the definition we have $\Psi(y)(\sigma)=\eta(\sigma)$.
Consequently, $\Psi(y)=\eta$ and, thus, the image of pseudofield
$A\{y\}/\frak n$ contains $\eta$. Form the other hand the image
coincides with $A$, contradiction.
\end{proof}

The following theorem is a corollary of the previous propositions.

\begin{theorem}\label{equtheor}
Let $A$ be a pseudofield, then the following conditions are
equivalent:
\begin{enumerate}
\item $A$ is difference closed.
\item Every difference finitely generated over $A$ pseudofield coincides with $A$.
\item Every pseudofield generated over $A$ by one single element coincides with $A$.
\item The pseudofield $A$ is isomorphic to $\Fun K$, where $K$ is an algebraically closed field.
\end{enumerate}
\end{theorem}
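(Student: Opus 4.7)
My plan is to establish the theorem via the cycle $(1) \Rightarrow (2) \Rightarrow (3) \Rightarrow (4) \Rightarrow (1)$, which lets me invoke each of the three preceding propositions exactly once.

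Three of the four steps require essentially no work. The implication $(1) \Rightarrow (2)$ is Proposition~\ref{weakcon}; $(2) \Rightarrow (3)$ is immediate, since a pseudofield generated over $A$ by one element is \emph{a fortiori} finitely generated; and $(4) \Rightarrow (1)$ is the direction of Proposition~\ref{funcon} asserting that $\Fun K$ is difference closed whenever $K$ is algebraically closed.

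The substantive step is $(3) \Rightarrow (4)$, which I would split in two. First, Proposition~\ref{singdef} applies verbatim under hypothesis (3), giving that the Taylor homomorphism identifies $A$ with $\Fun K$ for $K = A/\frak m$. Second, I have to verify that this $K$ is algebraically closed, for which I mimic the construction used in the forward direction of Proposition~\ref{funcon}. Given a nonconstant $f \in K[y]$, form the $A$-algebra $R = A\{y\}/[\,\sigma y - y : \sigma\in\Sigma\,]$, which as a commutative ring is $\Fun K[y]$, and then its further quotient $B = R/(f(y))$; since $f(y)$ is $\Sigma$-invariant, the ideal $(f(y))$ is a difference ideal, so $B$ is a nonzero difference ring. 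Pick any pseudomaximal ideal $\frak n$ of $B$; since $A$ is simple, the composition $A \to B \to B/\frak n$ is an embedding, so $B/\frak n$ is a pseudofield generated over $A$ by the image of $y$. Hypothesis (3) then forces $B/\frak n = A$. The image of $y$ is therefore an invariant element of $A \cong \Fun K$, hence a constant lying in $K$, and it is a root of $f$; this shows $K$ is algebraically closed. The only obstacle I foresee is the bookkeeping in this last step---checking that $B$ is nonzero and that the embedding $A \hookrightarrow B/\frak n$ really does exhibit $B/\frak n$ as a one-element extension of $A$---but no new machinery is required beyond Propositions~\ref{weakcon}, \ref{funcon}, and \ref{singdef}.
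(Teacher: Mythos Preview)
Your proposal is correct and follows essentially the same route as the paper: the same cycle $(1)\Rightarrow(2)\Rightarrow(3)\Rightarrow(4)\Rightarrow(1)$, invoking Proposition~\ref{weakcon} for $(1)\Rightarrow(2)$, Proposition~\ref{singdef} plus a repetition of the forward half of Proposition~\ref{funcon} for $(3)\Rightarrow(4)$, and Proposition~\ref{funcon} for $(4)\Rightarrow(1)$. The bookkeeping you flag (nontriviality of $B$, and that $B/\frak n$ is a one-generator pseudofield extension of $A$) is exactly what the paper also glosses over, and your justifications for both points are sound.
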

\begin{proof}
(1)$\Rightarrow$(2). It follows from Proposition~\ref{weakcon}.

(2)$\Rightarrow$(3). Is trivial.

(3)$\Rightarrow$(4). By Proposition~\ref{singdef}, it follows that
the ring $A$ is isomorphic to $\Fun K$. We only need to show that
$K$ is algebraically closed (see Proposition~\ref{funcon}). For that
we shall repeat the first half of the proof of
Proposition~\ref{funcon}.

We know that every pseudofield difference generated by one single
element over $\Fun K$ coincides with $\Fun K$. Let us recall that
$K$ can be embedded into $\Fun K$ as the subring of the constants.
Consider the ring
$$
R=\Fun K\{y\}/(\ldots,\sigma y-y,\ldots)_{\sigma\in\Sigma}.
$$
As a commutative ring it is isomorphic to  $\Fun K[y]$. Let $f$ be a
polynomial in one variable with coefficients in $K$. The ideal
$(f(y))$ is a nontrivial ideal in $\Fun K[y]$. Moreover, since
$f(y)$ is an invariant element the mentioned ideal is difference.
Let $\frak m$ be a pseudomaximal ideal in $B$, then the pseudofield
$B/\frak m$ coincides with $\Fun K$. The image of $y$ in $\Fun K$
will be denoted by $t$. By the definition we have $f(t)=0$  and $t$
is an invariant element. Thus, $t$ is in $K$. Therefore, the field
$K$ is algebraically closed.

(4)$\Rightarrow$(1). It follows from Proposition~\ref{funcon}
\end{proof}

\begin{example}
Consider the field of complex numbers $\mathbb C$ and its
automorphism $\sigma$ (the complex conjugation). This pair can be
regarded as a difference ring with a group $\Sigma=\mathbb
Z/2\mathbb Z$. Let $\mathbb C[x]$ be the ring of polynomials over
$\mathbb C$ and automorphism $\sigma$ is acting as follows
$\sigma(f(x))=\overline{f}(-x)$. Then the ideal  $(x^2-1)$  is a
difference ideal. Consider the ring $A=\mathbb C[x]/(x^2-1)$. As a
commutative ring it can be presented as follows
$$
\mathbb C[x]/(x^2-1)=\mathbb C[x]/(x-1)\times\mathbb
C[x]/(x+1)=\mathbb C\times \mathbb C.
$$
Under this mapping an element $c\in \mathbb C$ maps to  $(c,c)$ and
$x$ maps to $(1,-1)$. The automorphism acts as follows
$(a,b)\mapsto(\overline{b},\overline{a})$. Consider the projection
of $A$ onto its first factor. For this homomorphism there is the
Taylor homomorphism $A\to \Fun \mathbb C$. As a commutative ring the
ring $\Fun \mathbb C$ coincides with $\mathbb C\times \mathbb C$.
Automorphism acts as follows $(a,b)\mapsto(b,a)$. The Taylor
homomorphism is defined by the following rule $a+bx\mapsto
(a+b,\overline{a}-\overline{b})$.

Now we have two homomorphisms: the first one is $f\colon A\to
\mathbb C\times\mathbb C$ and is defined by the rule
$$
a+bx\mapsto (a+b,a-b)
$$
and the second one is $g\colon A\to \mathbb C\times \mathbb C$ and
is defined by the rule
$$
a+bx\mapsto (a+b,\overline{a}-\overline{b}).
$$
Then composition  $g\circ f^{-1}$ acts as follows $(a,b)\mapsto
(a,\overline{b})$.

So, pseudofield $A$ is difference closed. Moreover, the homomorphism
$g\circ f^{-1}$ transforms the initial action of $\sigma$ into more
simple one.
\end{example}

Let $A$ be a pseudofield and $\frak m$ is its maximal ideal. Then
the residue field of $\frak m$ will be denoted by $K$, i.~e.,
$K=A/\frak m$. Let $L$ be the algebraical closure of $K$. The
pseudofield $\Fun L$ will be denoted by $\overline{A}$. Let
$\varphi\colon A\to L$ be the composition of the quotient morphism
and the natural embedding of $K$ to $L$. Let $\Phi\colon A\to
\overline{A}$ be the Taylor homomorphism corresponding to $\varphi$.
We know that $\overline{A}$ is difference closed. Let us show that
$\overline{A}$ is a minimal difference closed pseudofield containing
$A$.

\begin{proposition}\label{difclosemin}
Let $D$ be a difference closed pseudofield such that $A\subseteq
D\subseteq \overline{A}$. Then  $D=\overline{A}$.
\end{proposition}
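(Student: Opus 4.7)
Since $D$ is a difference closed pseudofield, Theorem~\ref{equtheor} provides a difference isomorphism $D\cong\Fun M$ for some algebraically closed field $M$; in particular $D$ has exactly $|\Sigma|$ maximal ideals and the natural map $D\xrightarrow{\sim}\prod_{\frak n\in\Max D}D/\frak n$ is an isomorphism. The plan is to match the commutative-algebra structure of $D$ with that of $\overline{A}=\Fun L$ inside the ambient pseudofield $\overline{A}$.

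First I would locate $\Max D$ inside $\Max\overline{A}$. The maximal ideals of $\Fun L$ are the kernels $\frak n_\sigma=\ker\gamma_\sigma$ for $\sigma\in\Sigma$, and since $D$ is absolutely flat each contraction $\frak n_\sigma\cap D$ is a maximal ideal of $D$. The inclusion $D\hookrightarrow\Fun L$ is $\Sigma$-equivariant, and $\Sigma$ acts transitively on $\Max D$ because $D$ is a pseudofield; together with $|\Max D|=|\Sigma|=|\Max\Fun L|$ this forces $\Max D=\{\frak n_\sigma\cap D\mid\sigma\in\Sigma\}$ with the $|\Sigma|$ contractions pairwise distinct.

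The crucial step identifies the residue fields. For each $\sigma$ the map $\gamma_\sigma$ descends to an embedding $D/(\frak n_\sigma\cap D)\hookrightarrow L$ whose image is an algebraically closed subfield isomorphic to $M$. For $\sigma=e$ the composition $A\hookrightarrow D\xrightarrow{\gamma_e}L$ coincides with $\gamma_e\circ\Phi=\varphi$, so the image contains $\varphi(A)=K$; by $\Sigma$-equivariance the image contains $K$ for every $\sigma$ as well. Now any algebraically closed intermediate field $K\subseteq M'\subseteq L$ must equal $L$: every $\ell\in L$ is algebraic over $K\subseteq M'$, hence lies in $M'$. Therefore $D/(\frak n_\sigma\cap D)=L$ for every $\sigma$.

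Chaining these facts yields
$$
D\;\xrightarrow{\sim}\;\prod_{\sigma\in\Sigma}D/(\frak n_\sigma\cap D)\;=\;\prod_{\sigma\in\Sigma}L\;=\;\Fun L,
$$
and this isomorphism coincides with the given inclusion $D\hookrightarrow\Fun L$; hence $D=\overline{A}$. The main obstacle is the residue-field identification: one must upgrade the abstract isomorphism $D\cong\Fun M$ to the statement that the intrinsic residue field of $D$ actually fills up $L$ inside $\Fun L$, and this relies essentially on $L$ being algebraic over $K$ together with the fact that $K$ lies inside every residue field of $D$.
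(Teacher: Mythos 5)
Your argument is correct and follows the same essential route as the paper: the decisive step in both is that each residue field of $D$ is algebraically closed (via Theorem~\ref{equtheor}, $D\cong\Fun M$) and sits between $K$ and its algebraic closure $L$, hence equals $L$. The only cosmetic difference is that you conclude via the explicit decomposition $D\cong\prod_{\sigma}D/(\frak n_\sigma\cap D)$, whereas the paper invokes the uniqueness of the Taylor homomorphism together with Proposition~\ref{singdef}; these amount to the same bookkeeping.
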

\begin{proof}
Consider the sequence of rings $A\subseteq D\subseteq \overline{A}$.
Let $\frak m$ be a maximal ideal of $\overline{A}$. Then we have the
following sequence of fields
$$
A/A\cap\frak m\subseteq D/D\cap\frak m\subseteq \overline{A}/\frak
m.
$$
Since $D$ is difference closed, it follows from
Theorem~\ref{equtheor} that the field $D/D\cap\frak m$ coincides
with $L=\overline{A}/\frak m$. Now consider the composition of $D\to
D/D\cap\frak m$ and  $D/D\cap\frak m\to L$ and let $\Psi\colon D\to
L$ be the corresponding Taylor homomorphism. It follows from the
uniqueness of the Taylor homomorphism that $\Psi$ coincides with the
initial embedding of $D$ to $\overline{A}$. So, $D$ satisfies the
condition of Proposition~\ref{singdef}.
\end{proof}

\begin{proposition}\label{difclosuni}
Let $B$ be a difference closed pseudofield  containing $A$. Then
there exists an embedding of  $\overline{A}$ to $B$ over $A$.
\end{proposition}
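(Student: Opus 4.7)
The plan is to realize $B$ concretely as $\Fun F$ for an algebraically closed field $F$, embed $L$ into $F$ over $K$, and transport this to the desired difference embedding $\overline{A}=\Fun L\hookrightarrow B$; the uniqueness clause of Proposition~\ref{taylor} then forces the resulting map to extend the given inclusion $A\hookrightarrow B$.

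First I would arrange for a maximal ideal $\frak n$ of $B$ with $\frak n\cap A=\frak m$. The contraction $\frak n\cap A$ is prime in $A$, hence maximal (as $A$ is absolutely flat), so by the transitivity of $\Sigma$ on $\Max A$ it equals $\frak m^\sigma$ for some $\sigma\in\Sigma$; replacing $\frak n$ by $\frak n^{\sigma^{-1}}$ achieves the contraction condition. Since $B$ is difference closed, Theorem~\ref{equtheor}(4) combined with the proof of Proposition~\ref{singdef} identifies $B$ with $\Fun F$ via the Taylor homomorphism of the quotient $B\to B/\frak n$, where $F=B/\frak n$ is algebraically closed and $\frak n$ corresponds to $\ker\gamma_e$. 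Under this identification, the inclusion $A\hookrightarrow B=\Fun F$ is a difference homomorphism whose composition with $\gamma_e$ is the natural map $A\to A/\frak m=K\hookrightarrow F$.

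Next, because $F$ is algebraically closed and contains a copy of $K$, I can fix a field embedding $\iota\colon L\hookrightarrow F$ extending this inclusion $K\hookrightarrow F$. Applying $\iota$ coordinatewise defines a difference embedding $\tilde\iota\colon\Fun L\to\Fun F=B$. The composition $\tilde\iota\circ\Phi\colon A\to\Fun F$ is a difference homomorphism and its composition with $\gamma_e$ equals $\iota\circ\varphi$, which by the choice of $\iota$ is precisely the natural map $A\to K\hookrightarrow F$. But this is also the composition of $\gamma_e$ with the given inclusion $A\hookrightarrow\Fun F=B$. The uniqueness part of Proposition~\ref{taylor}, applied to the two difference lifts of this single homomorphism $A\to F$, forces $\tilde\iota\circ\Phi$ to agree with the inclusion $A\hookrightarrow B$, so $\tilde\iota$ is the required embedding of $\overline{A}$ into $B$ over $A$.

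The main point of care is the initial adjustment of $\frak n$: the specific maximal ideal $\frak m$ was used to construct $\overline{A}$, so the contraction of $\frak n$ must land on this particular ideal and not merely on some $\Sigma$-conjugate. Once this compatibility is in place and $\iota$ is chosen to lie over the induced inclusion $K\hookrightarrow F$, everything else is a direct application of the uniqueness of the Taylor homomorphism; any other choice of $\iota$ would still produce an abstract difference embedding $\overline{A}\hookrightarrow B$, but not one over $A$.
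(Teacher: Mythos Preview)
Your proof is correct and follows essentially the same route as the paper: pick a maximal ideal upstairs, pass to residue fields, embed the algebraic closure $L$ into the algebraically closed residue field of $B$, lift via the Taylor homomorphism, and invoke uniqueness to see that the lift restricts to the given inclusion $A\hookrightarrow B$. The only cosmetic difference is that the paper does not adjust the maximal ideal of $B$ to contract to the specific $\frak m$; instead it takes whatever contraction $\frak m^c$ arises and then chooses a maximal ideal of $\overline{A}$ lying over $\frak m^c$ (using that $A\to\overline{A}$ is a flat extension of absolutely flat rings), which works equally well since all residue fields of $A$ are isomorphic.
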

\begin{proof}

On the following diagram arrows present the embeddings of $A$ to
$\overline{A}$ and to $B$ respectively:
$$
\xymatrix@R=5pt@C=15pt{
    \overline{A}&&B\\
    &A\ar[ul]\ar[ur]&
}
$$

Let $\frak m$ be a maximal ideal in $B$. Then it contracts to a
maximal ideal $\frak m^c$ in $A$. Since $A$ is an absolutely flat
ring there exists an ideal $\frak n$ in  $\overline{A}$ contracting
to $\frak m^c$ (see~\cite[chapter~3, ex.~29, ex.~30]{AM}). So, we
have
$$
\xymatrix@R=10pt@C=15pt{
    \overline{A}/\frak n&&B\ar[d]\\
    &A/\frak m^c\ar[ul]\ar[r]&B/\frak m
}
$$

By the definition the field $\overline{A}/\frak n$  is the algebraic
closure of $A/\frak m^c$ and $B/\frak m$ is algebraically closed
(Theorem~\ref{equtheor}). Therefore, there exists an embedding of
$\overline{A}/\frak n$ to $B/\frak m$.
$$
\xymatrix@R=10pt@C=15pt{
    \overline{A}\ar[d]&&B\ar[d]\\
    {\overline{A}/\frak n}\ar@/^1pc/[rr]&A/\frak m^c\ar[l]\ar[r]&B/\frak m
}
$$

So, there is a homomorphism $\overline{A}\to B/\frak m$. Then
Proposition~\ref{taylor} guaranties that there is a difference
homomorphism $\varphi$ such that the following diagram is
commutative
$$
\xymatrix@R=10pt@C=15pt{
    \overline{A}\ar[d]\ar@/^/[rrd]\ar[rr]^{\varphi}&&B\ar[d]\\
    {\overline{A}/\frak n}\ar@/^1pc/[rr]&A/\frak m^c\ar[l]\ar[r]&B/\frak m
}
$$

The restriction of $\varphi$ onto $A$ coincides with the Taylor
homomorphism for the mapping $A\to B/\frak m$. It follows from the
uniqueness that the Taylor homomorphism coincides with the initial
embedding of $A$ to $B$.
\end{proof}

\begin{example}
Let $\Sigma=\mathbb Z/2\mathbb Z$ and $\sigma=1$ be a nonzero
element of $\Sigma$. Consider the field $\mathbb C(t)$, where $t$ is
a transcendental element over  $\mathbb C$. We assume that action of
$\Sigma$ is trivial on $\mathbb C(t)$. Consider the following system
of difference equations
$$
\left\{
\begin{aligned}
\sigma x&=-x,\\
x^2&=t.
\end{aligned}
\right.
$$

Let $L$ be the algebraical closure of the field $\mathbb C(t)$. Then
the difference closure of $\mathbb C(t)$ coincides with $\Fun L$.
From the definition we have $\Fun L=L\times L$, where the first
factor corresponds to $0$ and the second one to $1$ in  $\mathbb
Z/2\mathbb Z$. Then our system has the two solutions
$(\sqrt{t},-\sqrt{t})$ and $(-\sqrt{t},\sqrt{t})$.

Moreover, we are able to construct the field containing the
solutions of this system. Consider the ring of polynomials $\mathbb
C(t)[x]$, where $\sigma x=-x$. Then the ideal $(x^2-t)$ is a maximal
difference ideal. Define
$$
D=\mathbb C(t)[x]/(x^2-t).
$$

By the definition $D$ is a minimal field containing solutions of the
system. From the other hand, Proposition~\ref{taylor} guaranties
that $D$ can be embedded into the difference closure of $\mathbb
C(t)$.
\end{example}

\begin{example}
Consider a ring $A=\mathbb C\times \mathbb C$ and a group
$\Sigma=\mathbb Z/4\mathbb Z$. Let $\sigma=1$ be a generator of
$\Sigma$. Let $\Sigma$ act on $A$ by the following rule
$\sigma(a,b)=(b,\overline{a})$. Then $\sigma$ is an automorphism of
fourth order. Consider the projection of $A$ onto the first factor.
Then there exists a homomorphism $\Phi\colon A\to \Fun \mathbb C$
such that the following diagram is commutative
$$
\xymatrix{
    &\Fun \mathbb C\ar[d]^{\gamma_e}\\
    A\ar[r]^{\pi}\ar[ru]^{\Phi}&\mathbb C
}
$$
where $\pi$ is the projection onto the first factor of $A$.

The pseudofield $\Fun \mathbb C$ is of the following form  $\mathbb
C_0\times\mathbb C_1\times\mathbb C_2\times\mathbb C_3$, where
$\mathbb C_i$ is a field $\mathbb C$ over the point $i$ of $\Sigma$.
Using this notation, the homomorphism $\gamma_e$ coincides with the
projection onto the first factor.  The element $\sigma$ acts on
$\Fun \mathbb C$ by the right transaction. The Taylor homomorphism
is defined by the rule $(a,b)\mapsto
(a,\overline{b},\overline{a},b)$.

Consider the embedding of $\mathbb C$ into $A$ by the rule $c\mapsto
(c,c)$ and the embedding into $\Fun \mathbb C$ by the rule
$(c,\overline{c},\overline{c},c)$. These both embeddings induce the
structure of  a $\mathbb C$-algebra. Since dimensions of $A$ and
$\Fun \mathbb C$ equal $2$ and $4$, respectively, $\Fun \mathbb C$
is generated by one single element over $A$. We shall find this
element explicitly. Consider the element  $x=(i,i,i,i)$ of $\Fun A$.
This element does not belong to $A$, therefore, $\Fun \mathbb
C=A\{x\}$. We have the following relations on the element $x$:
$\sigma x=x$ and $x^2+1=0$. Comparing the dimensions, we get $\Fun
\mathbb C =A\{y\}/[\sigma x-x, x^2+1]$.
\end{example}

\begin{example}
Let $\mathbb C$ be the field of complex numbers considered as a
difference ring over $\Sigma=\mathbb Z/2\mathbb Z$ and let
$\sigma=1$ be the nonzero element of the group. Then the system of
equations
$$
\left\{
\begin{aligned}
x\sigma x&=0,\\
x+\sigma x&=1
\end{aligned}
\right.
$$
has no solutions in every difference overfield containing $\mathbb
C$. But the ideal
$$
[x\sigma x,x+\sigma x-1]
$$
of the ring  $\mathbb C\{x\}$ is not trivial. Therefore, the system
has solutions in the difference closure of $\mathbb C$. The closure
coincides with $\Fun \mathbb C$. Namely, $\Fun \mathbb C=\mathbb
C\times\mathbb C$, where the first factor corresponds to zero and
the second one to the element $\sigma$. Then the solutions are
$(1,0)$ and $(0,1)$.
\end{example}

\begin{example}
Let $U$ be an open subset in the complex plane $\mathbb C$ and let
$\Sigma$ be a finite group of automorphisms of $U$. The ring of all
holomorphic functions in $U$ will be denoted by $A$. Then $A$ is a
$\Sigma$-algebra with respect to the action
$$
(\sigma\varphi)(z)=\varphi(\sigma^{-1} z).
$$
The difference closure of $\mathbb C$ is $\Fun \mathbb C$. Consider
an arbitrary point $x\in U$, then  there is a substitution
homomorphism $\psi_x\colon A\to \mathbb C$ such that $f\mapsto
f(x)$. Proposition~\ref{taylor} says that there exists the
corresponding Taylor homomorphism $\Psi_x\colon A\to \Fun \mathbb
C$.

Let us show the geometrical sense of this mapping. Consider the
orbit of the point $x$ and denote it by $\Sigma x$. Then there is a
natural mapping $\Sigma\to \Sigma x$ by $\sigma\mapsto \sigma x$.
Then for every function $\varphi\in A$ the composition of
$\Sigma\to\Sigma x$ and $\varphi|_{\Sigma x}\colon\Sigma x\to
\mathbb C$ coincides with the mapping $\Psi_x(\varphi)$. So, the
Taylor homomorphism $\Psi_x$ is just the restriction onto the orbit
of the given element $x$.
\end{example}

\begin{proposition}
Let $A$ be a pseudofield. Then the following conditions are
equivalent:
\begin{enumerate}
\item $A$ is difference closed
\item For every $n$ and every set $E\subseteq
A\{y_1,\ldots,y_n\}$ if there is a common zero for $E$ in $B^n$,
where $B$ is a pseudofield containing $A$, then there is a common
zero in $A^n$.
\item For every $n$, every set $E\subseteq
A\{y_1,\ldots,y_n\}$ and every finite set $W\subseteq
A\{y_1,\ldots,y_n\}$ if there is a common zero $b$ for $E$ in $B^n$,
where $B$ is a pseudofield containing $A$, such that no element of
$W$ vanishes on $b$, then there is a common zero for $E$ in $A^n$
such that no element of $W$ vanishes on it.
\end{enumerate}
\end{proposition}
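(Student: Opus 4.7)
The plan is to establish the cycle $(1)\Rightarrow(2)\Rightarrow(3)\Rightarrow(1)$. The first implication is immediate: a common zero of $E$ in some pseudofield overfield forces $[E]$ to be proper, so $I(V_A([E]))=\frak r([E])$ is also proper by (1), whence $V_A([E])$ is nonempty. The implication $(3)\Rightarrow(2)$ is the trivial case $W=\emptyset$.

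The two substantive implications, $(2)\Rightarrow(3)$ and $(3)\Rightarrow(1)$, both rest on a single device: encoding the condition ``$g\ne 0$ in a pseudofield'' as a difference polynomial system. Since $\Sigma$ acts transitively on the primitive idempotents of any pseudofield, for a nonzero idempotent $e$ the $\Sigma$-orbit of its support covers every component, so $\prod_{\sigma\in\Sigma}(1-\sigma e)=0$; conversely this product equation forces $e\ne 0$ in any nonzero pseudofield. Thus ``$g\ne 0$'' is equivalent to the solvability of the three equations $e^2-e=0$, $gc-e=0$, $\prod_\sigma(1-\sigma e)=0$ in auxiliary variables $c,e$: the support idempotent of $g$ and a pseudoinverse realise the system, and conversely the third equation forces $e\ne 0$ while $gc=e\ne 0$ forces $g\ne 0$. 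For $(2)\Rightarrow(3)$ I would then augment $E$ with new difference variables $c_i,e_i$ together with the three equations above for each $w_i$. A solution of the augmented system exists in $B^{n+2k}$ because every $w_i(b)\ne 0$, so (2) yields a solution in $A^{n+2k}$, and its first $n$ coordinates produce the desired $a\in V_A(\frak a)$ with $w_i(a)\ne 0$ for all $i$.

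For $(3)\Rightarrow(1)$, suppose $f\in I(V_A(\frak a))\setminus\frak r(\frak a)$; replacing $\frak a$ by its radical, $\bar f$ is a nonzero and hence non-nilpotent element of $R=A\{y\}/\frak a$. Since $A$ is a finite product of fields, $A\{y\}$ is a polynomial algebra over a Jacobson ring, so $R$ is Jacobson and some maximal ideal $\frak m_0\subset R$ avoids $\bar f$. By Proposition~\ref{techbasic}(2) the ideal $\frak n=(\frak m_0)_\Sigma$ is pseudomaximal, still avoids $\bar f$, and contracts to $0$ in the simple difference ring $A$, so $B=R/\frak n$ is a pseudofield containing $A$. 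The images of the $y_i$ in $B^n$ give a common zero $b$ of $\frak a$ with $f(b)\ne 0$, and (3) applied with $W=\{f\}$ then produces $a\in V_A(\frak a)$ with $f(a)\ne 0$, the desired contradiction. The main obstacle is that the classical Rabinowitsch trick $fy-1$ is inadequate here: it encodes ``$f$ is a unit,'' not ``$f\ne 0$,'' and in a pseudofield these differ (for instance $(1,0)$ is nonzero but not a unit in $K\times K$); the idempotent-covering encoding based on $\Sigma$-transitivity on primitive idempotents is what rescues the argument.
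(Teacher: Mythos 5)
Your proposal is correct, and it distributes the work around the cycle quite differently from the paper. The paper proves $(1)\Rightarrow(3)$ directly and does all the heavy lifting there: it invokes the classification $A\cong\Fun K$ from Theorem~\ref{equtheor}, reduces $W$ to the single element $w=\prod_i\tau_iw_i$, localizes at $w$, applies the Hilbert Nullstellensatz over $K$, and lifts via the Taylor homomorphism (Proposition~\ref{taylor}); it then closes the cycle with $(2)\Rightarrow(1)$ by verifying criterion~(2) of Theorem~\ref{equtheor} on finitely generated pseudofields. You instead make $(1)\Rightarrow(2)$ the trivial step, handle the inequations by a purely formal $(2)\Rightarrow(3)$, and prove $(3)\Rightarrow(1)$ by a Jacobson-ring argument (essentially the method of Theorem~\ref{nullth}) combined with Proposition~\ref{techbasic}(2) to manufacture the witnessing pseudofield $R/\frak n$. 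Your idempotent encoding of ``$g\neq0$'' --- the system $e^2=e$, $gc=e$, $\prod_{\sigma\in\Sigma}(1-\sigma e)=0$, justified by the transitivity of $\Sigma$ on the maximal ideals of a pseudofield --- is a genuinely new ingredient not in the paper, and your closing remark correctly identifies why the naive Rabinowitsch substitution $fy-1$ fails here. What your route buys is a cleaner separation of concerns: the only place the structure theory of difference closed pseudofields enters is the one-line $(1)\Rightarrow(2)$, while the paper's route gives as a by-product the slightly sharper quantitative fact that a single localization at $w$ already realizes all the inequations simultaneously. Both arguments rest on the same foundational results (transitivity of $\Sigma$ on $\Max$ of a pseudofield, Proposition~\ref{techbasic}, and the absolute flatness giving $e$ and $g^*$), so neither is more elementary in any essential sense.
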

\begin{proof}
(1)$\Rightarrow$(3). First of all we shall reduce the problem to the
case $|W|=1$. Let $b=(b_1,\ldots,b_n)\in B^n$ be the desired common
zero. The pseudofield $B$ can be embedded into its difference
closure $\overline{B}$. As we know $\overline{B}$ coincides with a
finite product of fields. Consider substitution homomorphism
$$
A\{y_1,\ldots,y_n\}\to B \to \overline{B}
$$
The composition of these two mappings we shall define by $\phi$. For
every element $w_i\in W$ we have $\phi(w_i)\neq0$. Thus for some
$\sigma_i$ we have $\phi(w_i)(\sigma_i)\neq 0$. By the definition of
$\Sigma$-action on $\overline{B}$, it follows that there is an
element $\tau_i\in \Sigma$ such that $\phi(\tau_i w_i)(e)\neq 0$.
(Actually, we know that $\tau_i=\sigma_i^{-1}$.) So,
$$
\phi\left(\prod_{i=1}^n\tau_iw_i\right)(e)\neq 0.
$$
Consider the polynomial $w=\prod_{i=1}^n\tau_iw_i$. It follows from
the definition that $\phi(w)\neq 0$. Moreover, the ring
$(A\{y_1,\ldots,y_n\}/[E])_w$ is not a zero ring.

Since $A$ is difference closed then it is of the form $A=\Fun K$,
where $K$ is algebraically closed. And there are homomorphisms
$\gamma_\sigma\colon A\to K$. Consider an arbitrary maximal ideal
$\frak n$ in $D=(A\{y_1,\ldots,y_n\}/[E])_w$ and let $\frak m$ be
its contraction to $A$. Then the field $D/\frak n$ is a finitely
generated algebra over $A/\frak m$. For the ideal $\frak m$ there is
a homomorphism $\gamma_\sigma$ such that $\frak m=\ker
\gamma_\sigma$. So, we have $A/\frak m= K$. Since $K$ is
algebraically closed, there is an embedding $D\to K$. So, we have
the following commutative diagram
$$
\xymatrix@R=20pt@C=20pt{
    A\{y_1,\ldots,y_n\}\ar[d]\ar[rd]^-{\phi}&\\
    (A\{y_1,\ldots,y_n\}/[E])_w\ar[r]&K
}
$$
such that $\phi|_A=\gamma_\sigma$. By Proposition~\ref{taylor}, it
follows that there exists a difference homomorphism $\varphi\colon
A\{y_1,\ldots,y_n\}\to A$ such that the following diagram is
commutative
$$
\xymatrix@R=20pt@C=20pt{
    A\{y_1,\ldots,y_n\}\ar[d]\ar[rd]^-{\phi}\ar[r]^-{\varphi}&A\ar[d]^{\gamma_\sigma}\\
    (A\{y_1,\ldots,y_n\}/[E])_w\ar[r]&K
}
$$
So, $\varphi$ is a difference homomorphism over $A$. The images of
$y_i$ give us the desired common zero in $A^n$.

(3)$\Rightarrow$(2). Is trivial.

(2)$\Rightarrow$(1). Let us show that every pseudofield difference
finitely generated over $A$ coincides with $A$. Let $B$ be a
pseudofield difference finitely generated over $A$. Then it can be
presented in the following form
$$
B=A\{y_1,\ldots,y_n\}/\frak m,
$$
where $\frak m$ is a pseudomaximal ideal. Then this ideal has a
common zero in $B^n$, $(y_1,\ldots,y_n)$ say. Consequently, there is
a common zero
$$
(a_1,\ldots,a_n)\in A^n.
$$
Consider a substitution homomorphism $A\{y_1,\ldots,y_n\}\to A$ by
the rule $y_i\mapsto a_i$. Then all elements of $\frak m$ maps to
zero. So, there is a difference homomorphism $B\to A$. Thus, $B$
coincides with $A$.
\end{proof}

\begin{proposition}
Let $A$ be a difference pseudofield with the residue field $K$ and
let $A[\Sigma]$ be the ring of difference operators on $A$. Then the
ring of difference operators is completely reducible and there is a
decomposition
$$
A[\Sigma]=A\oplus A\oplus\ldots\oplus A,
$$
where the number of summands is equal to size of the group $\Sigma$.
Moreover, we have
$$
A[\Sigma]=M_n(K),
$$
where $n=|\Sigma|$
\end{proposition}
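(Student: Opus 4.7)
The plan is to realize $A[\Sigma]$ as the $K$-linear endomorphism algebra of $A$ and read off both parts of the statement from there.

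First, the decomposition $A[\Sigma]=A\oplus A\oplus\cdots\oplus A$ into $|\Sigma|$ summands is immediate from the skew group ring structure: every element of $A[\Sigma]$ has a unique presentation $\sum_{\sigma\in\Sigma}a_\sigma\sigma$ with $a_\sigma\in A$, so as a left $A$-module
$$
A[\Sigma]=\bigoplus_{\sigma\in\Sigma}A\cdot\sigma,
$$
which gives the first claimed decomposition.

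For the matrix description I would invoke Theorem~\ref{equtheor} to identify $A$ with $\Fun K$, so that $A=\bigoplus_{\tau\in\Sigma}Ke_\tau$ with orthogonal idempotents $e_\tau$ permuted by the action $\sigma(e_\tau)=e_{\sigma\tau}$. Define the $K$-algebra homomorphism
$$
\Phi\colon A[\Sigma]\longrightarrow\operatorname{End}_K(A)
$$
sending $a\in A$ to multiplication by $a$ and $\sigma\in\Sigma$ to the $K$-linear automorphism $\sigma$ of $A$. The skew relation $\sigma\cdot b=\sigma(b)\cdot\sigma$ in $A[\Sigma]$ translates into the operator identity $(\sigma\circ b)(c)=\sigma(bc)=\sigma(b)\sigma(c)=(\sigma(b)\circ\sigma)(c)$, so $\Phi$ is a ring map.

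To see $\Phi$ is injective, suppose $\sum_\sigma a_\sigma\sigma$ lies in its kernel; evaluating on $e_\tau$ yields $\sum_\sigma a_\sigma e_{\sigma\tau}=0$. Multiplying by $e_{\sigma'\tau}$ and using orthogonality gives $a_{\sigma'}e_{\sigma'\tau}=0$ for every $\sigma'$ and every $\tau$. Since $\tau\mapsto\sigma'\tau$ is a bijection of $\Sigma$, every coordinate of $a_{\sigma'}$ vanishes, hence $a_{\sigma'}=0$. A dimension count over $K$ finishes the proof:
$$
\dim_K A[\Sigma]=|\Sigma|\cdot\dim_K A=|\Sigma|^2=\dim_K\operatorname{End}_K(A)=\dim_K M_{|\Sigma|}(K),
$$
so the injection $\Phi$ is an isomorphism.

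The main obstacle is really step two: one needs the identification $A\cong\Fun K$ (the hypothesis must be read as requiring $A$ to be difference closed), without which the $K$-dimensions of $A[\Sigma]$ and $M_{|\Sigma|}(K)$ simply do not match. With $A=\Fun K$ in force, the remainder is the standard skew-group-ring computation for a faithful regular permutation action, and the complete reducibility of $A[\Sigma]$ follows from the Wedderburn structure of the matrix algebra.
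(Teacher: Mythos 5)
Your identification $A[\Sigma]\cong M_n(K)$ is correct and complete, and it takes a genuinely different route from the paper. You build the faithful representation $\Phi\colon A[\Sigma]\to\operatorname{End}_K(A)$ directly and conclude by a dimension count, whereas the paper first decomposes $A[\Sigma]$ into left ideals isomorphic to the simple difference module $A$ and then computes $A[\Sigma]=\Hom_{A[\Sigma]}(A[\Sigma],A[\Sigma])=M_n(\Hom_{A[\Sigma]}(A,A))=M_n(K)$. Your route is more elementary (no endomorphism-ring bookkeeping) and exhibits the isomorphism explicitly as the natural action of difference operators on $A=\Fun K\cong K^{|\Sigma|}$; the paper's route produces the module decomposition as a byproduct rather than deducing it afterwards from the Wedderburn structure. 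You are also right that the statement must be read with $A\cong\Fun K$ (equivalently, $\Sigma$ acting simply transitively on $\Max A$): for a field $K$ with trivial $\Sigma$-action one gets the group algebra $K[\Sigma]$, which is not a matrix algebra, so ``pseudofield'' alone would not suffice; the paper's own proof invokes $A=\Fun K$ at exactly the same point.

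One caveat about your first paragraph. The decomposition $A[\Sigma]=\bigoplus_\sigma A\cdot\sigma$ is a decomposition of $A[\Sigma]$ as a \emph{free left $A$-module}; the summands $A\cdot\sigma$ are not left ideals of $A[\Sigma]$ (left multiplication by $\tau\in\Sigma$ carries $A\cdot\sigma$ onto $A\cdot\tau\sigma$), so this is not the decomposition the proposition asserts, it holds for any skew group ring, and it has no bearing on complete reducibility. The intended decomposition is into $|\Sigma|$ left ideals each isomorphic to $A$ as a difference module --- $A$ is a simple $A[\Sigma]$-module precisely because it is a simple difference ring; in the paper these left ideals are the $A_\tau=\{\,\sum_\sigma a_\sigma\delta_\sigma\,\sigma\tau\,\}$ built from the indicators $\delta_\sigma$. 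Your argument still delivers this: under $\Phi$ the column decomposition of $M_n(K)$ pulls back to a decomposition of $A[\Sigma]$ into $n$ left ideals, each isomorphic to the natural module $K^n\cong A$, which is exactly the claimed statement. So the defect is one of presentation rather than substance, but the free-$A$-module decomposition should be replaced by this one.
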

\begin{proof}
Let us define the following module
$$
A_\tau = \{\,\sum_{\sigma} a_\sigma \delta_\sigma \sigma\tau\,\},
$$
where $\delta_\sigma$ is the indicator of the point $\sigma$. Using
the fact that $A=\Fun K$, we see that
$$
A[\Sigma]=\mathop{\oplus}_{\sigma\in \Sigma}A_\sigma.
$$
And moreover, every module $A_\sigma$ is isomorphic to $A$ as a
difference module. It follows from the equality
$$
A[\Sigma]=\Hom_{A[\Sigma]}(A[\Sigma],A[\Sigma])=M_n(\Hom_{A[\Sigma]}(A,A))
$$
that $A[\Sigma]=M_n(K)$.
\end{proof}

\begin{remark}
It follows from the previous proposition  that every difference
module over a difference closed pseudofield is free. Moreover, for
every such module there is a basis consisting of $\Sigma$-invariant
elements.
\end{remark}

\subsection{difference finitely generated algebras}\label{sec44}

The section is devoted to different technical conditions on
difference finitely generated algebras.

\begin{lemma}\label{lemma1}
Let $A$ be a ring with finitely many minimal prime ideals. Then
there exists an element $s\in A$ such that there is only one minimal
prime ideal in $A_s$.
\end{lemma}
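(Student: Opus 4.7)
The plan is to use the standard fact that the minimal primes of a localization $A_s$ are exactly the extensions of those minimal primes of $A$ that do not contain $s$. So given the finitely many minimal primes $\mathfrak{p}_1,\ldots,\mathfrak{p}_n$ of $A$, I want to find $s$ lying in all of them except one, say $s\in\mathfrak{p}_2\cap\cdots\cap\mathfrak{p}_n$ and $s\notin\mathfrak{p}_1$. Then $(\mathfrak{p}_1)_s$ will be the unique minimal prime of $A_s$.

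The key step is to show that such an element exists, i.e.\ that
$$
\mathfrak{p}_2\cap\cdots\cap\mathfrak{p}_n \not\subseteq \mathfrak{p}_1.
$$
This is where prime avoidance (in its usual contrapositive form) enters: if the intersection were contained in the prime $\mathfrak{p}_1$, then some $\mathfrak{p}_i$ with $i\geqslant 2$ would already be contained in $\mathfrak{p}_1$, forcing $\mathfrak{p}_i=\mathfrak{p}_1$ by minimality of $\mathfrak{p}_1$, which contradicts the assumption that the $\mathfrak{p}_j$ are distinct minimal primes.

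Having picked such an $s$, I conclude by noting that a minimal prime $\mathfrak{q}$ of $A_s$ contracts to a minimal prime of $A$ not meeting $\{s^n\}_{n\geqslant0}$; since $s\in\mathfrak{p}_i$ for all $i\geqslant 2$, the only candidate is $\mathfrak{p}_1$, so $A_s$ has a unique minimal prime. The only (very mild) obstacle is just making sure the correspondence between minimal primes of $A$ and $A_s$ is stated correctly; the argument itself is an immediate application of prime avoidance and contains no real difficulty.
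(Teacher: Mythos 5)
Your proof is correct and follows essentially the same route as the paper: both arguments produce an element $s\in\bigcap_{i\geqslant 2}\mathfrak p_i\setminus\mathfrak p_1$, the existence of which rests on the fact that a prime containing a finite intersection of ideals must contain one of them (the paper cites this as~\cite[chapter~1, sec.~6, prop.~1.11(II)]{AM}, you derive it directly), and then both use the standard correspondence between minimal primes of $A_s$ and minimal primes of $A$ avoiding $s$.
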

\begin{proof}

Let $\frak p_1,\ldots,\frak p_n$ be all minimal prime ideal of $A$.
Then it follows from~\cite[chapter~1, sec.~6, prop.~1.11(II)]{AM}
that there exists an element $s$ such that
$$
s\in\bigcap_{i=2}^n\frak p_i\setminus\frak p_1.
$$
Then there is only one minimal prime ideal in $A_s$ and this ideal
corresponds to $\frak p_1$.
\end{proof}

\begin{lemma}\label{lemma2}
Let $A\subseteq B$ be rings such that $A$ is an integral domain and
$B$ is finitely generated over $A$. Then there exists an element
$s\in A$ with the following property. For any algebraically closed
field $L$ every homomorphism $A_s\to L$ can be extended to a
homomorphism $B_s\to L$.
\end{lemma}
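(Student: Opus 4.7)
The plan is to reduce the claim to a Noether-normalization argument over the fraction field of $A$ followed by clearing denominators. Let $K$ denote the fraction field of $A$ and form $B_K = B \otimes_A K$, which is a finitely generated $K$-algebra. Noether normalization will produce elements $y_1, \ldots, y_m \in B_K$, algebraically independent over $K$, such that $B_K$ is a finite module over $K[y_1, \ldots, y_m]$. By scaling each $y_i$ by a nonzero element of $A$ I can arrange that $y_i \in B$ while preserving both algebraic independence and finiteness.

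Next I would let $b_1, \ldots, b_N$ be generators of $B$ as an $A$-algebra. Each $b_i$ is integral over $K[y_1, \ldots, y_m]$ and so satisfies a monic relation whose coefficients involve only finitely many elements of $K$. Collecting a common denominator provides a nonzero $s \in A$ such that every $b_i$ is integral over $R := A_s[y_1, \ldots, y_m]$. Hence $B_s$ is a finite, in particular integral, extension of $R$. The elements $y_1, \ldots, y_m$ remain algebraically independent over $A_s$ (which embeds into $K$), so $R$ is a genuine polynomial ring over $A_s$ and the inclusions $A_s \hookrightarrow R \hookrightarrow B_s$ hold.

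To finish, take any $\varphi\colon A_s \to L$ with $L$ algebraically closed. I first extend $\varphi$ to $\tilde{\varphi}\colon R \to L$ by sending each $y_i$ to any element of $L$ (say $0$); this is well defined because $R$ is polynomial over $A_s$. Set $\frak{p} = \ker \tilde\varphi$. By lying-over for the integral extension $R \hookrightarrow B_s$, there exists a prime $\frak{q} \subset B_s$ with $\frak{q} \cap R = \frak{p}$. The induced extension of residue fields at $\frak p$ and $\frak q$ is algebraic, so the embedding $R/\frak{p} \hookrightarrow L$ factors through the fraction field of $R/\frak p$ and, because $L$ is algebraically closed, extends to the fraction field of $B_s/\frak q$. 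Composing with $B_s \twoheadrightarrow B_s/\frak{q}$ and the embedding into $L$ produces the required extension of $\varphi$.

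The principal obstacle is precisely the uniform clearing of denominators in the second paragraph: one needs a single $s \in A$ that simultaneously controls all integrality relations for $b_1, \ldots, b_N$ and the denominators used to bring the $y_i$ into $B$. Since both sets of data are finite, the product of the relevant denominators works. Everything after that is classical commutative algebra (Noether normalization, lying-over, and extension of embeddings into an algebraically closed field), and the $\Sigma$-structure plays no role whatsoever in this lemma.
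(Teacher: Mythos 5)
Your proof is correct, but it takes a different route from the paper. The paper's proof is a two-line reduction: pick a minimal prime $\frak p$ of $B$ lying over $0$ in $A$ (so that $A\subseteq B/\frak p$ with $B/\frak p$ a domain) and then cite Atiyah--Macdonald, Chapter~5, Exercise~21, which is exactly the statement for a domain $B$. You instead prove the result from scratch: Noether normalization of $B\otimes_A\operatorname{Frac}(A)$, spreading out over a principal localization $A_s$, and then lying-over plus extension of embeddings into an algebraically closed field. This buys a self-contained argument that also avoids the domain reduction entirely, since Noether normalization applies to arbitrary finitely generated algebras over a field; the paper's approach buys brevity at the cost of a black-box citation (whose standard solution is an induction on generators rather than normalization). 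One detail you should make explicit: the monic relations $P_i(b_i)=0$ you obtain hold a priori only in $B\otimes_A\operatorname{Frac}(A)$, so $P_i(b_i)\in B$ is merely $S$-torsion; you must fold the finitely many annihilators $t_i$ with $t_iP_i(b_i)=0$ into your element $s$ so that the relations genuinely hold in $B_s$. Since only finitely many such $t_i$ occur, this is absorbed by the same ``product of denominators'' device you already invoke, and the rest of your argument (injectivity of $A_s[y_1,\ldots,y_m]\hookrightarrow B_s$, lying-over, algebraicity of the residue extension) goes through as written.
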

\begin{proof}

Let $S=A\setminus 0$, consider the ring $S^{-1}B$. This ring is a
finitely generated algebra over the field $S^{-1}A$. Then there are
finitely many minimal prime ideals in $S^{-1}B$. These ideals
correspond to the ideals in $B$ contracting to $0$. Let $\frak p$ be
one of them. Consider the rings $A\subseteq B/\frak p$. It follows
from~\cite[chapter~5, ex.~21]{AM}  that there exists an element
$s\in A$ with the following property. For every algebraically closed
field $L$ every homomorphism  $A_s\to L$ can be extended to a
homomorphism $(B/\frak p)_s\to L$. Considering the composition of
the last one with $B_s\to (B/\frak p)_s$, we extend the initial
homomorphism to the homomorphism $B_s\to L$.
\end{proof}

\begin{lemma}\label{lemma3}
Let  $A\subseteq B$ be rings such that $A$ is an integral domain and
$B$ is finitely generated over $A$. Then there exists an element
$s\in A$ such that the corresponding mapping $\Spec B_s\to\Spec A_s$
is surjective.
\end{lemma}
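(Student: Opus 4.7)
The plan is to deduce this lemma directly from Lemma~\ref{lemma2} by using the standard characterization of the image of $\Spec B_s \to \Spec A_s$ in terms of residue-field extensions.

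First I would take the element $s \in A$ furnished by Lemma~\ref{lemma2}, so that every homomorphism from $A_s$ into an algebraically closed field extends to $B_s$. Then given an arbitrary prime ideal $\frak p \in \Spec A_s$, I would form the integral domain $A_s/\frak p$, take its field of fractions $F$, and let $L$ be an algebraic closure of $F$. Composing the quotient with the inclusion $A_s \to A_s/\frak p \hookrightarrow F \hookrightarrow L$ produces a homomorphism $\psi\colon A_s \to L$ whose kernel is exactly $\frak p$.

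Next, by the defining property of $s$ from Lemma~\ref{lemma2}, I can extend $\psi$ to a homomorphism $\widetilde{\psi}\colon B_s \to L$. The kernel $\frak q = \ker\widetilde{\psi}$ is a prime ideal of $B_s$ (since $L$ is a field), and its contraction to $A_s$ equals $\ker\psi = \frak p$. This is exactly what is needed to conclude surjectivity of $\Spec B_s \to \Spec A_s$.

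There is essentially no obstacle here: the whole content of the lemma is packaged into the extension property of Lemma~\ref{lemma2}. The only thing worth noting is that one has to invoke algebraic closure rather than just working with the residue field $A_s/\frak p$ itself, because Lemma~\ref{lemma2} requires the target to be algebraically closed; but this costs nothing since the kernel of the extension is still a prime ideal of $B_s$ regardless of how large $L$ is.
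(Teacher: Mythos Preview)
Your proposal is correct and follows essentially the same route as the paper's proof: both take the element $s$ from Lemma~\ref{lemma2}, pass to the residue field of a given prime, embed it into an algebraic closure $L$, apply the extension property to obtain a homomorphism from $B_s$ (or $B$) into $L$, and read off the desired prime as the kernel. The only cosmetic difference is that the paper phrases things in terms of primes of $A$ not containing $s$ rather than primes of $A_s$, which amounts to the same thing under the standard bijection.
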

\begin{proof}

From the previous lemma we find an element $s$. Let $\frak p$ be a
prime ideal in $A$ not containing $s$. The residue field of $\frak
p$ will be denoted by $K$. Let $L$ denote the algebraic closure of
$K$. The composition of the mappings $A\to K$ and $K\to L$ will be
denoted by $\varphi\colon A\to L$. By the definition we have
$\varphi(s)\neq 0$. Consequently, there exists a homomorphism
$\overline{\varphi}\colon B\to L$ extending $\varphi$. Then  $\ker
\overline{\varphi}$ is the desired ideal laying over $\frak p$.
\end{proof}

We shall give two proves of the following proposition.

\begin{proposition}\label{homst}
Let $A\subseteq B$ be difference rings, $B$ being difference
finitely generated over $A$, and there are only finitely many
minimal prime ideals in $A$. Then there exists a nonnilpotent
element $u$ in $A$ with the following property. For every difference
closed pseudofield $\Omega$ and every difference homomorphism
$\varphi\colon A \to \Omega$ such that $\varphi(u)\neq 0$ there
exists a difference homomorphism  $\overline{\varphi}\colon B\to
\Omega$ with the condition $\overline{\varphi}|_A=\varphi$.
\end{proposition}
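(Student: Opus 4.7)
My plan is to reduce the difference-theoretic extension problem to a purely ring-theoretic one via the Taylor homomorphism of Proposition~\ref{taylor}, and then to handle the ring-theoretic problem by means of Lemma~\ref{lemma2}, combining local data at different minimal primes using prime avoidance. First, I replace $A$ and $B$ by their reductions: the nilradicals are $\Sigma$-invariant, every difference homomorphism into a pseudofield $\Omega$ (which is reduced) factors through these reductions, and $A\subseteq B$ induces $A/\sqrt{0}\hookrightarrow B/\sqrt{0}$ with the same set of minimal primes as $A$. So I assume $A$, $B$ reduced, with minimal primes $\frak p_1,\ldots,\frak p_n$ of $A$.

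For each $i$, the localization $A_{\frak p_i}$ is a field (reduced local ring with unique prime), so it injects into the nonzero ring $(A\setminus\frak p_i)^{-1}B$; choosing any prime of the latter and contracting to $B$ produces a prime $\frak Q_i$ of $B$ with $\frak Q_i\cap A=\frak p_i$. Thus $A/\frak p_i\hookrightarrow B/\frak Q_i$ is an inclusion with $A/\frak p_i$ a domain and $B/\frak Q_i$ finitely generated as an $A/\frak p_i$-algebra (because $\Sigma$ is finite, so $B$ is ring-finitely-generated over $A$). Lemma~\ref{lemma2} then yields an element $s_i\in A\setminus\frak p_i$ such that any homomorphism $A/\frak p_i\to L$ into an algebraically closed field sending $s_i$ to a nonzero element extends to $B/\frak Q_i\to L$. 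By prime avoidance and the incomparability of minimal primes, pick $u_i\in \bigcap_{j\neq i}\frak p_j\setminus\frak p_i$, and set
$$
u=\sum_{i=1}^n u_i s_i\in A.
$$
Modulo $\frak p_k$, every summand with $i\neq k$ vanishes and $u\equiv u_ks_k\not\equiv 0\pmod{\frak p_k}$; hence $u$ avoids every minimal prime of $A$ and is in particular non-nilpotent.

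To verify the required property, let $\varphi\colon A\to\Omega$ be a difference homomorphism into a difference closed pseudofield with $\varphi(u)\neq 0$. By Theorem~\ref{equtheor}, $\Omega=\Fun K$ for an algebraically closed field $K$, so $\varphi(u)$ is a nonzero function $\Sigma\to K$; choose $\sigma$ with $\gamma_\sigma(\varphi(u))\neq 0$ and set $\psi=\gamma_\sigma\circ\varphi\colon A\to K$. Then $\ker\psi$ is prime and contains some minimal prime $\frak p_i$, and modulo $\frak p_i$ the element $u$ reduces to $u_is_i$, forcing $\psi(s_i)\neq 0$. By the choice of $s_i$, $\psi$ extends through $B/\frak Q_i$ to a ring homomorphism $\tilde\psi\colon B\to K$; Proposition~\ref{taylor} at $\sigma$ then produces a difference homomorphism $\overline\varphi\colon B\to\Fun K=\Omega$ with $\gamma_\sigma\circ\overline\varphi=\tilde\psi$. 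The restriction $\overline\varphi|_A$ is a difference homomorphism with $\gamma_\sigma\circ(\overline\varphi|_A)=\psi=\gamma_\sigma\circ\varphi$, so the uniqueness clause of Proposition~\ref{taylor} forces $\overline\varphi|_A=\varphi$, giving the sought-for extension. The delicate step, where most of the care is needed, is precisely the construction of $u$: a single element of $A$ that is non-nilpotent globally and, whichever minimal prime $\frak p_i$ happens to sit inside $\ker(\gamma_\sigma\circ\varphi)$, still extracts the matching ring-theoretic witness $\psi(s_i)\neq 0$.
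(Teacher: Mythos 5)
Your proof is correct, and it follows the same overall strategy as the paper's first proof: reduce to the case of reduced rings, solve the extension problem for ordinary ring homomorphisms into an algebraically closed field by means of Lemma~\ref{lemma2}, and then transport the solution back to difference homomorphisms using Proposition~\ref{taylor} together with the uniqueness of the Taylor homomorphism (which is exactly what forces $\overline{\varphi}|_A=\varphi$). The one place you genuinely diverge is the construction of $u$. The paper first invokes Lemma~\ref{lemma1} to find $s$ with $A_s$ an integral domain, so that only one minimal prime survives, applies Lemma~\ref{lemma2} once to the pair $A_s\subseteq B_s$, and sets $u=st$; its $u$ therefore certifies only those $\varphi$ for which $\ker(\gamma_\sigma\circ\varphi)$ lies over that single surviving minimal prime. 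You instead run Lemma~\ref{lemma2} at every minimal prime $\frak p_i$ of $A$ --- via a prime $\frak Q_i$ of $B$ lying over $\frak p_i$, whose existence you correctly obtain from exactness of localization and minimality of $\frak p_i$ --- and glue the local witnesses $s_i$ together with prime-avoidance elements $u_i$ into $u=\sum_i u_is_i$. Both constructions yield a valid $u$ for the statement as quantified; yours is marginally stronger in that it works no matter which minimal prime sits inside $\ker(\gamma_\sigma\circ\varphi)$, at the cost of some extra bookkeeping. The small points you pass over quickly (lifting $s_i$ from $A/\frak p_i$ back to $A$, the equality $\frak Q_i\cap A=\frak p_i$, injectivity of the induced map of reductions, and the fact that $B/\frak Q_i$ is ring-finitely generated over $A/\frak p_i$ because $\Sigma$ is finite) all check out.
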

\begin{proof}[First proof]

It follows from Theorem~\ref{equtheor} that $\Omega$ is of the
following form $\Fun L$, where $L$ is an algebraically closed field.
Let $\gamma_\sigma\colon \Omega\to L$ be the corresponding
substitution homomorphisms.

We shall reduce the theorem to the case where $A$ and $B$ are
reduced. Let us assume that we have proved the theorem for rings
without nilpotent elements. Let $\frak a$ and $\frak b$ be the
nilradicals of $A$ and $B$, respectively. Let $s'\in A/\frak a$ be
the desired element, denote by $s$ some preimage of $s'$ in $A$. Let
$\varphi\colon A\to \Omega$ be a difference homomorphism with
condition $\varphi(s)\neq0$. Since $\Omega$ does not contain
nilpotent elements, $\frak a$ is in the kernel of $\varphi$.
Consequently, there exists a homomorphism $\varphi'\colon A/\frak
a\to \Omega$.
$$
\xymatrix@R=10pt@C=15pt{
    &A\ar[r]\ar[d]\ar[dl]_{\varphi}&B\ar[d]\\
    \Omega&A/\frak a\ar[r]\ar[l]_{\varphi'}&B/\frak b
}
$$
Since $\varphi(s')=\varphi(s)\neq0$, then it follows from our
hypothesis that there is a difference homomorphism
$\overline{\varphi}'\colon B/\frak b\to \Omega$.
$$
\xymatrix@R=10pt@C=15pt{
    &A\ar[r]\ar[d]\ar[dl]_{\varphi}&B\ar[d]\\
    \Omega&A/\frak a\ar[r]\ar[l]_{\varphi'}&B/\frak
    b\ar@/^1pc/[ll]^{\overline{\varphi}'}
}
$$
Then the desired homomorphism $\overline{\varphi}\colon B\to \Omega$
is the composition of the quotient homomorphism and
$\overline{\varphi}'$.

Now we suppose that the nilradicals of $A$ and $B$ are zero. By
Lemma~\ref{lemma1}, it follows that there exists an element $s\in A$
such that $A_s$ contains only one minimal prime ideal. Since $A$ has
no nilpotent elements, $A_s$ is an integral domain. Let us apply
Lemma~\ref{lemma2} to the pair  $A_s\subseteq B_s$. So, there exists
an element $t\in A$ such that for any algebraically closed field $L$
every homomorphism of $A_{st}\to L$ can be extended to a
homomorphism $B_{st}\to L$. Denote the element $st$ by $u$. Let us
show that the desired property holds. Let $\varphi\colon A\to
\Omega$  be a difference homomorphism such that  $\varphi(u)\neq0$.
Then for some $\sigma$ we have  $\gamma_\sigma\circ
\varphi(u)\neq0$. So, there is a homomorphism  $\varphi_\sigma\colon
A\to L$ such that $\varphi_\sigma(u)\neq 0$. We shall extend
$\varphi_\sigma$ to a homomorphism $B\to L$ as it shown in the
following diagram (numbers show the order).
$$
\xymatrix@R=10pt@C=10pt{
    A_u\ar[rr]&&B_u&A_u\ar[rd]^{1}\ar[rr]&&B_u&A_u\ar[rd]^{1}\ar[rr]&&B_u\ar[dl]_{2}\\
    &L&\ar@{.>}[r]&&L&\ar@{.>}[r]&&L&&\\
    A\ar[uu]\ar[rr]^{\varphi}\ar[ru]^{\varphi_\sigma}&&\Omega\ar[ul]_{\gamma_\sigma}&A\ar[uu]\ar[rr]^{\varphi}\ar[ru]^{\varphi_\sigma}&&\Omega\ar[ul]_{\gamma_\sigma}&A\ar[uu]\ar[rr]^{\varphi}\ar[ru]^{\varphi_\sigma}&&\Omega\ar[ul]_{\gamma_\sigma}
}
$$

The homomorphism~$1$ appears from the condition
$\varphi_\sigma(u)\neq0$ and the universal property of localization.
The homomorphism~$2$ exists because of the definition of $u$.
$$
\xymatrix@R=10pt@C=10pt{
A_u\ar[rd]^{1}\ar[rr]&&B_u\ar[dl]_{2}&B\ar[l]\ar[dll]_{3}&A_u\ar[rd]^{1}\ar[rr]&&B_u\ar[dl]_{2}&B\ar[l]\ar[dll]_{3}\ar[ddl]_{\overline{\varphi}}\\
    &L&&\ar@{.>}[r]&&L&&\\
    A\ar[uu]\ar[rr]^{\varphi}\ar[ru]^{\varphi_\sigma}&&\Omega\ar[ul]_{\gamma_\sigma}&&A\ar[uu]\ar[rr]^{\varphi}\ar[ru]^{\varphi_\sigma}&&\Omega\ar[ul]_{\gamma_\sigma}&
}
$$
The homomorphism~$3$ is constructed as a composition. Then from
Proposition~\ref{taylor} there exists a difference homomorphism
$\overline{\varphi}$. Since the diagram is commutative, it follows
from the uniqueness of the Taylor homomorphism for $A$ that the
restriction of $\overline{\varphi}$ onto $A$ coincides with
$\varphi$.
\end{proof}
\begin{proof}[Second proof]
We shall derive this proposition from Proposition~\ref{inher}. Since
$B$ is finitely generated over $A$ then there exists an element $s$
in $A$ such that the corresponding mapping
$$
\Spec B_s\to \Spec A_s
$$
is surjective. Then it follows from Proposition~\ref{inher}~(1) that
the mapping
$$
(\PSpec B)_s\to (\PSpec A)_s
$$
is surjective.

Let $\Omega$ be an arbitrary difference closed pseudofield and
$\varphi\colon A\to \Omega$ is a difference homomorphism such that
$\varphi(s)\neq 0$. The kernel of $\varphi$ will be denoted by
$\frak p$ and we have $s\notin \frak p$. Therefore, there is a
pseudoprime ideal $\frak q\subseteq B$ such that $\frak q^c=\frak p$
and $s\notin \frak q$. Consider the following ring
$$
R=B/\frak q\mathop{\otimes}_{A/\frak p}\Omega.
$$
It follows from the definition that $R$ is difference finitely
generated over $\Omega$. Let $\frak m$  be an arbitrary maximal
difference ideal of $R$. Since $\Omega$ is difference closed, the
quotient ring $R/\frak m$ coincides with $\Omega$. Now we have the
following diagram
$$
\xymatrix@R=10pt{
    B\ar[rr]&&B/\frak q\ar[rd]&&\\
    A\ar[r]\ar[u]&A/\frak p\ar[ur]\ar[rd]&&B/\frak q\mathop{\otimes}_{A/\frak p}\Omega\ar[r]&R/\frak m = \Omega\\
    &&\Omega\ar[ru]&&\\
}
$$
The composition of upper arrows gives us the desired homomorphism
from $B$ to $\Omega$.
\end{proof}

There are two important particular cases of this proposition.

\begin{corollary}\label{cor1}
Let $A\subseteq B$  be difference rings, $B$ being difference
finitely generated over $A$, and $A$ is a pseudo integral domain.
Then there exists a non nilpotent element $u$ in $A$ with the
following property. For every difference closed pseudofield $\Omega$
and every difference homomorphism $\varphi\colon A \to \Omega$ such
that $\varphi(u)\neq 0$ there exists a difference homomorphism
$\overline{\varphi}\colon B\to \Omega$ with condition
$\overline{\varphi}|_A=\varphi$.
\end{corollary}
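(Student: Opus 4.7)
The plan is to deduce this corollary as a direct specialization of Proposition~\ref{homst}. The only hypothesis of Proposition~\ref{homst} that is not literally present in the statement of the corollary is that $A$ should have only finitely many minimal prime ideals, so the only real work is to verify this finiteness from the assumption that $A$ is a pseudo integral domain, i.e.\ that the zero ideal of $A$ is pseudoprime.

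The first step is this verification. Suppose $(0)\subseteq A$ is pseudoprime. Applying Proposition~\ref{techbasic}~(1) with $\frak q=0$, every minimal prime ideal of $A$ is $\Sigma$-associated with $(0)$. Fix one such minimal prime $\frak p$; by Proposition~\ref{TechnicalStatement}~(2) we have
$$
0=\bigcap_{\sigma\in\Sigma}\frak p^\sigma.
$$
If $\frak p'$ is any other minimal prime of $A$, then $\cap_\sigma \frak p^\sigma=0\subseteq\frak p'$, so $\frak p^\sigma\subseteq\frak p'$ for some $\sigma\in\Sigma$, and by minimality $\frak p'=\frak p^\sigma$. Hence the set of minimal primes of $A$ coincides with the $\Sigma$-orbit of $\frak p$, which is finite because $\Sigma$ is finite.

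The second step is simply to invoke Proposition~\ref{homst}: since $B$ is difference finitely generated over $A$ and $A$ has finitely many minimal prime ideals, we obtain a non-nilpotent element $u\in A$ with exactly the stated extension property.

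There is no real obstacle; the only thing to keep in mind is that ``pseudo integral domain'' must be interpreted so that $(0)$ is pseudoprime, and it is precisely this interpretation that, via Proposition~\ref{techbasic}~(1) and Proposition~\ref{TechnicalStatement}~(2) together with the finiteness of $\Sigma$, forces finitely many minimal primes and thus makes Proposition~\ref{homst} applicable.
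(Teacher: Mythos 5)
Your proposal is correct and follows essentially the same route as the paper: the paper likewise reduces to Proposition~\ref{homst} by showing that the minimal primes of a pseudo integral domain form a single finite $\Sigma$-orbit, using that every minimal prime is $\Sigma$-associated with $(0)$ and that $\bigcap_{\sigma}\frak p^{\sigma}=0$ forces any other minimal prime to coincide with some $\frak p^{\sigma}$. No gaps.
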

\begin{proof}

Since $A$ is a pseudo integral domain there are finitely many
minimal prime ideals in $A$. Indeed, let $\frak p$ be a minimal
prime ideal. Then it is $\Sigma$-associated with zero ideal. So,
$\cap_\sigma \frak p^\sigma=0$. Let $\frak q$ be an arbitrary
minimal prime ideal of $A$. Then  $\cap_\sigma\frak
p^\sigma=0\subseteq \frak q$. Therefore, for some $\sigma$ we have
$\frak p^\sigma\subseteq \frak q$. But $\frak q$ is a minimal prime
ideal, hence, $\frak p^\sigma=\frak q$. So, $\frak p^\sigma$ are all
minimal prime ideals of $A$. Now the result follows from the
previous theorem.
\end{proof}

\begin{corollary}\label{cor2}
Let $A\subseteq B$ be difference rings, $B$ being difference
finitely generated over $A$, and $A$ is a difference finitely
generated algebra over a pseudofield. Then there exists a non
nilpotent element $u$ in $A$ with the following property. For every
difference closed pseudofield $\Omega$ and every difference
homomorphism $\varphi\colon A \to \Omega$ such that $\varphi(u)\neq
0$ there exists a difference homomorphism  $\overline{\varphi}\colon
B\to \Omega$ with condition $\overline{\varphi}|_A=\varphi$.
\end{corollary}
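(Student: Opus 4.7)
The plan is to deduce Corollary~\ref{cor2} directly from Proposition~\ref{homst}. The only hypothesis of Proposition~\ref{homst} that is not already part of the statement of Corollary~\ref{cor2} is that $A$ should have only finitely many minimal prime ideals, so the entire task is to verify this one finiteness condition.

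First I would unpack the base. By the structure result for pseudofields proved in Section~\ref{sec43}, any pseudofield $F$ is isomorphic as a commutative ring to a finite product $K^n$, where $n=|\Max F|$ is finite because $\Sigma$ is finite and acts transitively on $\Max F$. In particular $F$ is Artinian, hence Noetherian. Since the group $\Sigma$ is finite, the ring of difference polynomials $F\{y_1,\ldots,y_m\}=F[\Sigma y_1,\ldots,\Sigma y_m]$ is an ordinary polynomial ring over $F$ in the $|\Sigma|\cdot m$ commuting indeterminates $\sigma y_i$, so by the Hilbert basis theorem it is Noetherian.

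Second I would use that $A$ is a difference finitely generated algebra over some pseudofield $F$, which means $A$ is a quotient of some $F\{y_1,\ldots,y_m\}$ by a difference ideal. By the previous step $A$ is therefore Noetherian as a commutative ring, and a Noetherian ring has only finitely many minimal prime ideals.

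Finally, the pair $A\subseteq B$ now satisfies every hypothesis of Proposition~\ref{homst}: the inclusion is a difference homomorphism, $B$ is difference finitely generated over $A$, and $A$ has only finitely many minimal primes. Applying Proposition~\ref{homst} produces the required nonnilpotent element $u\in A$ with the stated extension property, which is exactly the conclusion of Corollary~\ref{cor2}. There is no real obstacle; the content of the corollary is the observation that ``difference finitely generated over a pseudofield'' forces the commutative-algebraic finiteness assumption of Proposition~\ref{homst} to hold automatically.
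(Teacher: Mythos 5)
Your proof is correct and follows essentially the same route as the paper: reduce to Proposition~\ref{homst} by noting that a pseudofield is Artinian, hence Noetherian, that difference finite generation over it (with $\Sigma$ finite) is ordinary finite generation, so $A$ is Noetherian and has finitely many minimal primes. The only difference is that you spell out the Hilbert basis theorem step explicitly, which the paper leaves implicit.
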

\begin{proof}
Every pseudofield is an Artin ring and, thus, is Noetherian. If $A$
is difference finitely generated over a pseudofield, then $A$ is
finitely generated over it. Hence, $A$ is Noetherian. Consequently,
there are finitely many minimal prime ideals in $A$. Now the result
follows from the previous theorem.
\end{proof}

\begin{proposition}
Let $K$ be a pseudofield and $L$ be its difference closure. Consider
an arbitrary difference finitely generated algebra $A$ over $K$ and
a non nilpotent element $u\in A$. Then there is a difference
homomorphism $\varphi\colon A\to L$ such that $\varphi(u)\neq 0$.
\end{proposition}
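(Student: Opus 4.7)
The plan is to reduce the statement to the classical Hilbert Nullstellensatz together with the universal property of the Taylor homomorphism (Proposition~\ref{taylor}). I would first use Theorem~\ref{equtheor} to identify $L$ with $\Fun\bar k$, where $k$ is the residue field of any maximal ideal of $K$ and $\bar k$ is its algebraic closure. Because $\Sigma$ is finite, every difference finitely generated $K$-algebra is finitely generated as an ordinary $K$-algebra; because $K$ is a pseudofield it decomposes as $K=k_1\times\cdots\times k_n$ via orthogonal idempotents $e_1,\ldots,e_n$, all factors being (non-canonically) isomorphic to $k$. This decomposition of $K$ gives a decomposition $A=A_1\times\cdots\times A_n$ with $A_i=Ae_i$ a finitely generated $k_i$-algebra.

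Next I would exploit the given nonnilpotent element. Writing $u=u_1+\cdots+u_n$ with $u_i\in A_i$, the nonnilpotency of $u$ forces some $u_{i_0}$ to be nonnilpotent in $A_{i_0}$. Since $A_{i_0}$ is a finitely generated algebra over the field $k_{i_0}\cong k$ it is a Jacobson ring, so there is a maximal ideal $\mathfrak M\subset A_{i_0}$ with $u_{i_0}\notin\mathfrak M$. By the classical Hilbert Nullstellensatz the residue field $A_{i_0}/\mathfrak M$ is a finite algebraic extension of $k_{i_0}$, and therefore embeds into $\bar k$.

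Composing the projection $A\to A_{i_0}$, the quotient $A_{i_0}\to A_{i_0}/\mathfrak M$, and this embedding produces an ordinary ring homomorphism $\psi\colon A\to\bar k$ with $\psi(u)\neq 0$. Proposition~\ref{taylor} then supplies a unique difference homomorphism $\Phi\colon A\to\Fun\bar k=L$ satisfying $\gamma_e\circ\Phi=\psi$. In particular $\gamma_e(\Phi(u))=\psi(u)\neq 0$, which forces $\Phi(u)\neq 0$, and one takes $\varphi=\Phi$.

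There is no real obstacle: the argument closely follows the second half of the proof of Proposition~\ref{funcon}, and the only piece of bookkeeping is to remember which factor $k_{i_0}$ of $K$ is being used to build $\psi$. The mildly subtle auxiliary question—whether $\Phi$ can be arranged so that $\Phi|_K$ coincides with the prescribed embedding $K\hookrightarrow L$—can if necessary be handled by first replacing $i_0$ by a suitable $\Sigma$-translate, using that $\Sigma$ acts transitively on $\Max K$; but this compatibility is not required by the statement as written.
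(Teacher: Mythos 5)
Your proposal is correct and follows essentially the same route as the paper: both produce an ordinary ring homomorphism $\psi\colon A\to \bar k$ with $\psi(u)\neq 0$ via the classical Nullstellensatz (the paper localizes at $u$ and picks a maximal ideal of $A_u$, where you instead decompose $A$ along the idempotents of $K$ — a purely cosmetic difference), and then lift $\psi$ to a difference homomorphism into $L=\Fun\bar k$ by Proposition~\ref{taylor}. Your closing remark is also well taken: the paper does arrange $\varphi|_K$ to be the given embedding by choosing the Taylor homomorphism at the $\sigma$ with $\ker\pi_\sigma=\frak m$ and extending $\pi_\sigma$ to the residue field, but the statement as written does not demand this.
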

\begin{proof}
As we know $L=\Fun (F)$ for some algebraically closed field $F$ and
there are homomorphisms $\gamma_\sigma\colon L\to F$. Then we have
the compositions $\pi_\sigma \colon K\to L\to F$. As we can see
every maximal ideal of $K$ can be presented as $\ker \pi_\sigma$ for
an appropriate element $\sigma$. So, every factor field of $K$ can
be embedded into $F$.

Since $u$ is not a nilpotent element then the algebra $A_u$ is not a
zero ring. Consider an arbitrary maximal ideal $\frak n$ in $A_u$.
Let $\frak m$ denote its contraction to $K$. Then $(A/\frak n)_u$ is
a finitely generated field over $K/\frak m$. The field $K/\frak m$
can be embedded into $F$ by some mapping $\pi_\sigma$. Since $F$ is
algebraically closed, there is a mapping $\phi_\sigma\colon (A/\frak
n)_u\to F$ such that the following diagram is commutative
$$
\xymatrix@R=15pt@C=15pt{
    (A/\frak n)_u\ar[rd]^{\phi_\sigma}&\\
    K/\frak m\ar[r]^{\pi_\sigma}\ar[u]&F
}
$$
By Proposition~\ref{taylor}, it follows that there exists a mapping
$\varphi\colon A\to L $ such that the following diagram is
commutative
$$
\xymatrix{
    (A/\frak n)_u\ar[rrd]^{\phi_\sigma}&A\ar[l]^{\pi}\ar[r]^{\varphi}&L\ar[d]^{\gamma_\sigma}\\
    &K/\frak m\ar[r]^{\pi_\sigma}\ar[ul]&F
}
$$
where $\pi\colon A\to (A/\frak n)_u$ is a natural mapping.  Since
$\phi_\sigma\circ\pi(u)\neq 0$, we have $\varphi(u)\neq 0$.
\end{proof}

The next technical condition is concerned with extensions of
pseudoprime ideals.

\begin{proposition}
Let $A\subseteq B$ be difference rings, $B$ being difference
finitely generated over $A$, and there are finitely many minimal
prime ideals in $A$. Then there exists an element $u$ in $A$ such
that the mapping
$$
(\PSpec B)_u\to(\PSpec A)_u
$$
is surjective.
\end{proposition}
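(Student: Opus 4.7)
The plan is to reduce the problem to the surjectivity of an ordinary spectrum map and then transfer the conclusion to the pseudospectrum via Proposition~\ref{inher}~(1). Once we have an element $u\in A$ for which $\Spec B_u\to\Spec A_u$ is surjective, we apply Proposition~\ref{inher}~(1) with its ``$s$'' taken to be this $u$ and its ``$u$'' taken to be $1\in B$: the hypothesis there becomes precisely the surjectivity of $\Spec B_u\to\Spec A_u$, and the conclusion is exactly that $(\PSpec B)_u\to(\PSpec A)_u$ is surjective. So all the real work lies in the commutative algebra of the ordinary spectrum.

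First I would pass to the reduced rings. Since the $\Sigma$-action permutes prime ideals, it preserves the nilradicals $\mathfrak{a}\subseteq A$ and $\mathfrak{b}\subseteq B$, so both are difference ideals; from $A\hookrightarrow B$ we get $\mathfrak{a}=\mathfrak{b}\cap A$, hence $A/\mathfrak{a}\hookrightarrow B/\mathfrak{b}$ is a difference embedding satisfying the same hypotheses (difference finite generation is preserved, and the minimal primes of $A/\mathfrak{a}$ correspond bijectively to those of $A$). Pseudoprime ideals are radical by Proposition~\ref{TechnicalStatement}~(1), so the natural projections identify $\PSpec A$ with $\PSpec(A/\mathfrak{a})$ and $\PSpec B$ with $\PSpec(B/\mathfrak{b})$, and a $\bar u$ that works for the reduced rings lifts to a $u\in A$ that works for the original ones.

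Now assume $A$ and $B$ are reduced. The hypothesis that $A$ has only finitely many minimal primes lets us apply Lemma~\ref{lemma1}, yielding $s\in A$ such that $A_s$ has a unique minimal prime; combined with reducedness this forces $A_s$ to be an integral domain. Because $\Sigma$ is finite, $B$ is finitely generated as an ordinary ring over $A$, so $B_s$ is a finitely generated ring extension of the integral domain $A_s$. Lemma~\ref{lemma3} then supplies an element of $A_s$, which we may clear of denominators and take to be some $t\in A$, such that
$$
\Spec B_{st}\longrightarrow\Spec A_{st}
$$
is surjective. Setting $u=st\in A$ and invoking Proposition~\ref{inher}~(1) as in the opening paragraph finishes the proof; note also that $u$ is non-nilpotent because $s,t\notin \mathfrak{p}_1$ for the minimal prime $\mathfrak{p}_1$ surviving in $A_s$.

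The only conceptual point to flag is the contrast with Proposition~\ref{homst}: there one needs an element that controls all minimal components of $A$ simultaneously, whereas here we only demand surjectivity over the single distinguished open $(\PSpec A)_u$, so Lemma~\ref{lemma1} lets us simply discard every minimal component of $A$ except one by localizing at $s$ and no further patching is required.
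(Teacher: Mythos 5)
Your proof is correct and follows essentially the same route as the paper's: reduce to reduced rings, use Lemma~\ref{lemma1} to make $A_s$ a domain, find $t$ so that $\Spec B_{st}\to\Spec A_{st}$ is surjective, and transfer to pseudospectra via Proposition~\ref{inher}~(1). The only (harmless) difference is that you invoke Lemma~\ref{lemma3} directly for the surjectivity of the ordinary spectrum map, whereas the paper re-runs the Noether-normalization argument ``as in Lemma~\ref{lemma2}'' and cites going-up; your version is slightly more streamlined and your explicit treatment of the reduction to the reduced case fills in a step the paper leaves implicit.
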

\begin{proof}
We may suppose that the nilradicals of the rings are zero. By
Lemma~\ref{lemma1}, it follows that there exists an element  $s\in
A$ such that $A_s$ is an integral domain. Further, as in
Lemma~\ref{lemma2}  there is an element $t$ such that $B_{st}$  is
integral over $A_{st}[x_1,\ldots,x_n]$ and the elements
$x_1,\ldots,x_n$ are algebraically independent over $A_{st}$. Let
$u=st$. By Theorem~\cite[chapter~5, th.~5.10]{AM}, it follows that
the mapping $\Spec B_u\to \Spec A_u[x_1,\ldots,x_n]$ is surjective.
It is clear that the mapping
$$
\Spec A_u[x_1,\ldots,x_n]\to \Spec A_u
$$
is surjective too. So, from Proposition~\ref{inher} the mapping
$$
(\PSpec B)_u\to(\PSpec A)_u
$$
is surjective.
\end{proof}

\begin{proposition}\label{Pmaxst}
Let $A\subseteq B$ be difference finitely generated algebras over a
pseudofield. Then there exists an element $u$ in $A$ such that the
mapping
$$
(\PMax B)_u\to(\PMax A)_u
$$
is surjective.
\end{proposition}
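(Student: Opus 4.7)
The plan is to reduce to the preceding proposition on $\PSpec$-surjectivity by establishing the following auxiliary fact: in any difference finitely generated algebra $C$ over a pseudofield $K$, every non-nilpotent element of $C$ lies outside some pseudomaximal ideal. Granted this, any pseudoprime preimage of a given pseudomaximal ideal can be enlarged to a pseudomaximal preimage while keeping the chosen localizing element nonzero.

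To prove the auxiliary fact, note that since $\Sigma$ is finite, a difference finitely generated $K$-algebra is finitely generated in the ordinary commutative sense. The pseudofield $K$ is a finite product of fields, hence Artinian, hence Jacobson, and finitely generated algebras over Jacobson rings are Jacobson, so $C$ is Jacobson. Thus the Jacobson radical of $C$ coincides with its nilradical, and any non-nilpotent $u\in C$ lies outside some $\frak m\in\Max C$. By Proposition~\ref{techbasic}(2) the ideal $\frak m_\Sigma$ belongs to $\PMax C$, and since $\frak m_\Sigma\subseteq\frak m$ we have $u\notin\frak m_\Sigma$.

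For the main argument, $A$ is Noetherian (being finitely generated over an Artinian ring) and hence has only finitely many minimal primes, so the preceding proposition supplies $u\in A$ such that $(\PSpec B)_u\to(\PSpec A)_u$ is surjective. Given $\frak m\in(\PMax A)_u$, choose $\frak q\in(\PSpec B)_u$ with $\frak q\cap A=\frak m$. The quotient $B/\frak q$ is a difference finitely generated algebra over $K$, and the image of $u$ there is non-nilpotent because $\frak q$ is radical by Proposition~\ref{TechnicalStatement}(1). Applying the auxiliary fact to $B/\frak q$ and pulling back produces $\frak n\in\PMax B$ with $\frak q\subseteq\frak n$ and $u\notin\frak n$. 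The contraction $\frak n\cap A$ is a proper difference ideal containing the pseudomaximal ideal $\frak m$, hence equals $\frak m$; so $\frak n\in(\PMax B)_u$ lies over $\frak m$, proving surjectivity. The only genuine obstacle is the auxiliary fact---the delicate point being the passage from an ordinary maximal ideal avoiding $u$ to a pseudomaximal one via Proposition~\ref{techbasic}(2); the rest is a routine pullback argument.
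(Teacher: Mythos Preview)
Your argument is correct. The ingredients are the same as the paper's---the Jacobson property of finitely generated algebras over an Artinian ring together with Proposition~\ref{techbasic}(2)---but you assemble them in a different order. The paper re-opens the proof of the preceding proposition to extract that the ordinary map $\Spec B_u\to\Spec A_u$ is surjective, then uses the Jacobson property to deduce that $\Max B_u\to\Max A_u$ is surjective (maximal ideals contract to maximal ideals, so any prime over $\frak m$ can be enlarged to a maximal ideal still over $\frak m$), and only at the very end applies $\pi$ via Proposition~\ref{techbasic}(2). You instead take the preceding proposition as a black box at the pseudoprime level and then, inside $B/\frak q$, use your auxiliary fact to upgrade to a pseudomaximal ideal. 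Your route has the mild advantage of not re-running the previous proof; the paper's route is a bit more streamlined once one is willing to do that. Both are short and neither requires anything the other does not.
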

\begin{proof}
Since the algebra $A$ is difference finitely generated over a
pseudofield $A$ is Noetherian. Consequently, there are finitely many
minimal prime ideals in $A$. Following the proof of the previous
proposition, we are finding the element $u$ such that the mapping
$\Spec B_u\to\Spec A_u$ is surjective. Since $A_u$ and $B_u$ are
finitely generated over an Artin ring, the contraction of any
maximal ideal is a maximal ideal. So, the mapping $\Max B_u\to\Max
A_u$ is well-defined and surjective. Then
Proposition~\ref{techbasic}~(2) completes the proof.
\end{proof}

\subsection{Geometry}\label{sec45}

In this section we develop a geometric theory of difference
equations with solutions in pseudofields. This theory is quite
similar to the theory of polynomial equations.

Let $A$ be a difference closed pseudofield. The ring of difference
polynomials $A\{y_1,\ldots,y_n\}$ will be denoted by $R_n$. For
every subset $E\subseteq R_n$ we shall define the subset $V(E)$ of
$A^n$ as follows
$$
V(E)=\{\,a\in A^n \mid \forall f\in E:\:f(a)=0\,\}.
$$
This set will be called a pseudovariety. Conversely, let $X$ be an
arbitrary subset in $A^n$, then we set
$$
I(X)=\{\,f\in R_n\mid f|_X=0\,\}.
$$
This ideal is called the ideal of definition of $X$. Let now
$\Hom^\Sigma_A(R_n,A)$ denote the set of all difference
homomorphisms from $R_n$ to $A$ over $A$. Consider the mapping
$$
\varphi\colon A^n\to \Hom^\Sigma_A(R_n,A)
$$
by the rule: every point $a=(a_1,\ldots,a_n)$  maps to a
homomorphism $\xi_a$ such that $\xi_a(f)=f(a)$. The mapping
$$
\psi\colon\Hom^\Sigma_A(R_n,A)\to\PMax A
$$
by the rule $\xi\mapsto\ker\xi$ will be denoted by $\psi$. So, we
have the following sequence
$$
A^n\stackrel{\varphi}{\longrightarrow}
\Hom^\Sigma_A(R_n,A)\stackrel{\psi}{\longrightarrow}\PMax A.
$$

\begin{proposition}\label{bij}
The mappings $\varphi$ and $\psi$ are bijections
\end{proposition}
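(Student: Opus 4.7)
The plan is to verify bijectivity of $\varphi$ directly and then read off bijectivity of $\psi$ from the composition $\psi \circ \varphi$, using Proposition~\ref{weakcon} as the main input.

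First I would handle $\varphi$. Injectivity is immediate: if $\xi_a = \xi_b$, then applying both sides to the generators yields $a_i = \xi_a(y_i) = \xi_b(y_i) = b_i$. Surjectivity uses the universal property of the difference polynomial ring $R_n = A\{y_1,\ldots,y_n\}$: any difference $A$-algebra homomorphism $\xi \colon R_n \to A$ is determined by the tuple $(\xi(y_1),\ldots,\xi(y_n)) \in A^n$, and setting $a_i = \xi(y_i)$ gives $\xi = \xi_a$.

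Next, I would compute $\psi \circ \varphi$. For $a \in A^n$, clearly $[y_1 - a_1, \ldots, y_n - a_n] \subseteq \ker \xi_a$, and the quotient by the left-hand ideal is already $A$, so we get the equality
$$
\ker \xi_a = [y_1 - a_1, \ldots, y_n - a_n].
$$
This is a pseudomaximal ideal. For surjectivity of $\psi \circ \varphi$: given a pseudomaximal ideal $\frak m \subseteq R_n$, the quotient $R_n/\frak m$ is a pseudofield difference finitely generated over $A$. Since $A$ is difference closed, Proposition~\ref{weakcon} tells us that $R_n/\frak m$ coincides with $A$, and (as in that proof) $\frak m = [y_1 - a_1, \ldots, y_n - a_n]$ for $a_i$ the image of $y_i$. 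Thus $\frak m = \ker \xi_a = (\psi \circ \varphi)(a)$. For injectivity of $\psi \circ \varphi$: if $\ker \xi_a = \ker \xi_b$, then $y_i - a_i \in \ker \xi_b$ gives $b_i - a_i = \xi_b(y_i - a_i) = 0$, using that $\xi_b$ restricts to the identity on $A$.

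At this point $\varphi$ and $\psi \circ \varphi$ are both bijections, so $\psi$ is automatically a bijection as well. There is no real obstacle here: the only non-trivial ingredient is Proposition~\ref{weakcon}, which already encodes the classification of pseudomaximal ideals of $R_n$ over a difference closed pseudofield; everything else is a direct check using the $A$-linearity of the homomorphisms involved.
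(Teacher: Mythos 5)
Your proof is correct and follows essentially the same route as the paper: invert $\varphi$ by evaluation on the generators, identify $\ker\xi_a$ with $[y_1-a_1,\ldots,y_n-a_n]$, and use Proposition~\ref{weakcon} (difference closedness of $A$) to see that every pseudomaximal ideal of $R_n$ arises this way. You merely make explicit the appeal to Proposition~\ref{weakcon} and the injectivity check that the paper leaves implicit.
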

\begin{proof}
The inverse mapping for $\varphi$ is given by the rule
$$
\xi\mapsto(\xi(y_1),\ldots,\xi(y_n)).
$$
Since $A$ is difference closed, for every homomorphism $\xi\colon
R_n\to A$ its kernel is of the form $\ker
\xi=[y_1-a_1,\ldots,y_n-a_n]$, where $a_i=\xi(y_i)$. So, the mapping
$\psi$ injective and surjective.
\end{proof}

It is clear that under the mapping $\psi\circ\varphi$ the set $V(E)$
of $A^n$ maps to the set $V(E)$ of $\PMax R_n$. So, the sets $V(E)$
define a topology on $A^n$ and the mentioned mapping is a
homeomorphism. Therefore, we can identify pseudomaximal spectrum of
$R_n$ with an affine space $A^n$. Let $\frak a$ be a difference
ideal in $R_n$. Then the set $\Hom^\Sigma_A(R_n/\frak a,A)$ can be
identified with the set of all homomorphisms of
$\Hom^\Sigma_A(R_n,A)$ mapping $\frak a$ to zero. In other words,
there is a homeomorphism between $V(\frak a)$ and $\PMax R_n/\frak
a$.

\begin{corollary}
The mappings $\varphi$ and $\psi$ are homeomorphisms.
\end{corollary}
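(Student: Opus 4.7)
The plan is to derive the corollary from Proposition~\ref{bij} together with the explicit description of the kernels $\ker \xi_a$ used in its proof. Bijectivity is already in hand, so it suffices to verify that $\psi\circ\varphi$ matches the topology on $A^n$ (whose closed sets are by definition the pseudovarieties $V(E)$) with the subspace topology on $\PMax R_n\subseteq \PSpec R_n$, and then to transport this topology along $\varphi$ (equivalently, pull it back along $\psi^{-1}$) so that each of the two bijections is individually a homeomorphism.

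First I would fix the two topologies. On $A^n$ the closed sets are exactly the $V(E)$ with $E\subseteq R_n$; on $\PMax R_n$ the closed sets are the intersections $V(\frak a)\cap \PMax R_n$ with $\frak a$ a radical difference ideal, and by Proposition~\ref{pspectop}(1) we may replace $\frak a$ by an arbitrary subset $E\subseteq R_n$. The computation is then immediate: for a point $a=(a_1,\ldots,a_n)\in A^n$, the kernel $\psi(\varphi(a))=\ker\xi_a$ equals $[y_1-a_1,\ldots,y_n-a_n]$ (as exploited in the proof of Proposition~\ref{bij}), so $E\subseteq \ker\xi_a$ if and only if $f(a)=0$ for every $f\in E$. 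Consequently
\[
(\psi\circ\varphi)(V(E)) \;=\; \{\,\frak m\in\PMax R_n \mid E\subseteq\frak m\,\} \;=\; V(E)\cap\PMax R_n,
\]
and the closed sets on either side are in bijective correspondence under $\psi\circ\varphi$.

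Because $\psi\circ\varphi$ is a bijection (Proposition~\ref{bij}) which sends the family of closed sets on $A^n$ onto the family of closed sets on $\PMax R_n$, it is a homeomorphism. Declaring the topology on $\Hom^\Sigma_A(R_n,A)$ to be the one transported through $\varphi$ then makes $\varphi$ tautologically a homeomorphism, and forces $\psi$ to be a homeomorphism as well, since it is a bijection whose composition with a homeomorphism is a homeomorphism. There is no real obstacle in this argument; the only point worth flagging is that one must check the topology one would place on $\Hom^\Sigma_A(R_n,A)$ via $\varphi$ agrees with the one obtained via $\psi^{-1}$, and this is automatic once $\psi\circ\varphi$ is known to be a homeomorphism.
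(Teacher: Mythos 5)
Your argument is correct and is essentially the paper's own: the text preceding the corollary observes that $\psi\circ\varphi$ carries $V(E)\subseteq A^n$ onto $V(E)\subseteq\PMax R_n$, concludes that the composite is a homeomorphism, and implicitly equips $\Hom^\Sigma_A(R_n,A)$ with the transported topology so that each factor is a homeomorphism. Your write-up just makes explicit the kernel computation $E\subseteq\ker\xi_a\iff a\in V(E)$ and the (automatic) consistency of the two ways of topologizing the middle set.
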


\begin{theorem}\label{nullth}
Let $\frak a$ be a difference ideal in $R_n$. Then $\frak r(\frak
a)=I(V(\frak a))$.
\end{theorem}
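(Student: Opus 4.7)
The inclusion $\frak r(\frak a)\subseteq I(V(\frak a))$ is immediate: if $f^k\in\frak a$ then for any $a\in V(\frak a)$ we have $f(a)^k=0$ in $A$, and since $A$ is a pseudofield it is reduced, forcing $f(a)=0$. So the real task is the reverse inclusion, which I would prove by contrapositive.

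My plan is to reduce everything to the bijection of Proposition~\ref{bij}. A point $a\in A^n$ lies in $V(\frak a)$ and satisfies $f(a)\neq 0$ precisely when the evaluation $\xi_a\colon R_n\to A$ is a difference $A$-algebra homomorphism killing $\frak a$ but not $f$. So, letting $B:=R_n/\frak a$ and $\bar f$ denote the image of $f$ in $B$, it is enough to produce a difference $A$-algebra homomorphism $\varphi\colon B\to A$ with $\varphi(\bar f)\neq 0$. Now $B$ is a difference finitely generated algebra over the pseudofield $A$, and if $f\notin\frak r(\frak a)$ then $\bar f$ is non-nilpotent in $B$. I would therefore apply the proposition stated just before Theorem~\ref{nullth}, taking the base pseudofield to be $K=A$ (so that its difference closure $L$ is $A$ itself by Theorem~\ref{equtheor}), the algebra to be $B$, and the non-nilpotent element to be $\bar f$. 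This delivers a difference homomorphism $\varphi\colon B\to A$ with $\varphi(\bar f)\neq 0$.

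The main obstacle — the only nontrivial point — is to verify that this $\varphi$ is genuinely an $A$-algebra homomorphism, i.e.\ that $\varphi|_A=\operatorname{id}_A$, since the cited proposition merely asserts that $\varphi$ is a difference ring homomorphism into $L=A$. Unwinding its proof, $\varphi$ is constructed as a Taylor homomorphism at some $\sigma\in\Sigma$ for a composition $B\to (B/\frak n)_{\bar f}\to F$ extending the inclusion $A/\frak m\hookrightarrow F$ via $\gamma_\sigma\colon A=\Fun F\to F$. By uniqueness of Taylor homomorphisms, $\varphi|_A$ is the Taylor homomorphism at $\sigma$ of the map $\gamma_\sigma\colon A\to F$. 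The explicit formula from Proposition~\ref{taylor} gives
$$
\varphi(k)(\tau)=\gamma_\sigma(\sigma\tau^{-1}k)=(\sigma\tau^{-1}k)(\sigma)=k\bigl((\sigma\tau^{-1})^{-1}\sigma\bigr)=k(\tau),
$$
so $\varphi|_A=\operatorname{id}_A$. Composing $\varphi$ with the quotient $R_n\to B$ and applying the bijection of Proposition~\ref{bij} yields the point $a=(\varphi(y_1),\ldots,\varphi(y_n))\in V(\frak a)$ with $f(a)=\varphi(\bar f)\neq 0$, which shows $f\notin I(V(\frak a))$ and completes the proof.
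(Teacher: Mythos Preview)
Your argument is correct, but it takes a different path from the paper's proof. The paper argues structurally: since $A$ is Artinian it is a Jacobson ring, hence so is $R_n$; therefore $\frak r(\frak a)$ is the intersection of all maximal ideals of $R_n$ containing $\frak a$. Proposition~\ref{techbasic}(2) then converts this into an intersection of pseudomaximal ideals, and Proposition~\ref{bij} identifies pseudomaximal ideals with points of $V(\frak a)$, yielding $\frak r(\frak a)=I(V(\frak a))$ immediately. Your proof instead constructs, for each $f\notin\frak r(\frak a)$, an explicit witness $a\in V(\frak a)$ with $f(a)\neq 0$ by invoking the (unlabeled) proposition from Section~\ref{sec44} on non-nilpotent elements and the Taylor homomorphism machinery; you then carefully verify, via the explicit formula of Proposition~\ref{taylor}, that the resulting $\varphi$ restricts to the identity on $A$. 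Both arguments are valid. The paper's route is shorter and highlights that the Nullstellensatz here is really a consequence of the Jacobson property plus the $\Max\to\PMax$ correspondence; your route is more self-contained in that it avoids the Jacobson machinery, at the price of unwinding the Taylor construction to check $A$-linearity. One small inaccuracy: the proposition you invoke is not literally ``just before'' Theorem~\ref{nullth} --- Proposition~\ref{Pmaxst} and Proposition~\ref{bij} intervene --- so you should cite it more precisely.
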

\begin{proof}
Since $A$ is an Artin ring, $A$ is a Jacobson ring. $R_n$ is
finitely generated over $A$, consequently, $R_n$ is a Jacobson ring
too. Therefore, every radical ideal in $R_n$ can be presented as an
intersection of maximal ideals. Hence, every radical difference
ideal can be presented as an intersection of pseudomaximal ideals
(Proposition~\ref{techbasic} item~(2)). Now we are useing the
correspondence between points of $V(\frak a)$ and pseudomaximal
ideals (Proposition~\ref{bij}).
\end{proof}

\subsection{Regular functions and a structure sheaf}\label{sec46}

Let $X\subseteq A^n$ be a pseudovariety over a difference closed
pseudofield $A$ and let $I(X)$ be its ideal of definition in the
ring $R_n=A\{y_1,\ldots,y_n\}$. Then the ring $R_n/I(X)$ can be
identified with the ring of polynomial functions on $X$ and will be
denoted by $A\{X\}$.

Let $f\colon X\to A$ be a function. We shall say that $f$ is regular
at $x\in X$ if there are an open neighborhood $U$ containing $x$ and
elements $h,g\in A\{x_1,\ldots,x_n\}$ such that for every element
$y\in U$ $g(y)$ is invertible and $f(y)=h(y)/g(y)$. The condition on
$g$ can be stated as follows: for each element $y\in U$ the value
$g(y)$ is not a zero divisor. For any subset $Y$ in $X$ a function
is said to be regular on $Y$ if it is regular at each point of $Y$.
The set of all regular functions on an open subset $U$ of $X$ will
be denoted by $\mathcal O_X(U)$. Since the definition of a regular
function arises from a local condition the set of the rings
$\mathcal O_X$ form a sheaf. This sheaf will be called a structure
sheaf on $X$. This definition naturally generalizes the usual one in
algebraic geometry. It follows from the definition that there is the
inclusion $A\{X\}\subseteq \mathcal O_X(X)$. The very important fact
is that the other inclusion is also true.

\begin{theorem}\label{regularf}
For an arbitrary pseudovariety $X$ there is the equality
$$
A\{X\}=\mathcal O_X(X).
$$
\end{theorem}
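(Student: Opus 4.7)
The inclusion $A\{X\}\subseteq\mathcal O_X(X)$ is immediate from the definition; all the content is in the reverse inclusion, and my plan is to adapt the classical ``partition of unity'' argument to the pseudofield setting. The key subtlety that forces an extra reduction step is that in $A=\Fun K$ the notions ``nonzero'' and ``unit'' are distinct, while the definition of regularity demands invertibility of the denominator, not mere non-vanishing.

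Fix $f\in\mathcal O_X(X)$. For each local representation $f=h/g$ on an open neighborhood $U$ with $g(y)$ invertible on $U$, I would first pass to the pair $\bigl(h\prod_{\sigma\ne e}\sigma(g),\ \prod_{\sigma\in\Sigma}\sigma(g)\bigr)$: since evaluation is a $\Sigma$-homomorphism and each $\sigma$ preserves units, this does not shrink $U$, and the new denominator $\tilde g$ is $\Sigma$-invariant. For such a $\tilde g$ the value $\tilde g(a)$ lies in the constant subring $A^\Sigma=K$, which is a field, so $\tilde g(a)$ is either zero or a unit; the invertibility locus is therefore exactly the basic open $X_{\tilde g}$. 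After this reduction the $X_{g^a}$ cover $X$, the $g^a$ have no common zero, and Theorem~\ref{nullth} applied to the difference ideal they generate in $A\{X\}$ (which coincides with the ordinary ideal, since the $g^a$ are invariant) places $1$ in that ideal. Only finitely many suffice, giving a partition of unity $1=\sum_{i=1}^m c_ig_i$ together with a finite open cover $X=\bigcup U_i$, $U_i=X_{g_i}$, and local expressions $f=h_i/g_i$.

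The main obstacle is that the naive candidate $f'=\sum_ic_ih_i$ need not equal $f$, because the cocycle relation $h_ig_j=h_jg_i$ only holds on the overlap $U_i\cap U_j=X_{g_ig_j}$. My plan for globalizing it is to multiply through by $g_ig_j$: the difference $h_ig_j-h_jg_i$ vanishes on $X_{g_ig_j}$ while $g_ig_j$ vanishes on the complement, so the product vanishes on all of $X$ and hence is zero in $A\{X\}$. Replacing $(h_i,g_i)$ by $(H_i,G_i):=(h_ig_i,g_i^2)$ leaves $U_i$ and the quotient $f=H_i/G_i$ unchanged while upgrading the cocycle identity to $H_iG_j=H_jG_i$ as an identity in $A\{X\}$; a second application of Theorem~\ref{nullth} to the $G_i$ (still with no common zero on $X$) yields $1=\sum_id_iG_i$. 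I would then set $f'=\sum_id_iH_i\in A\{X\}$ and verify pointwise equality: for $a\in X$, choose $j$ with $a\in U_j$ so $G_j(a)$ is a unit, use the global identity $H_iG_j=H_jG_i$ and the partition of unity to obtain $f'(a)G_j(a)=H_j(a)=f(a)G_j(a)$, and cancel. This identifies $f$ with a polynomial function. The hard part, specific to this setting, is exactly the interplay between invertibility and non-vanishing in pseudofields, which the invariance reduction and the $g_ig_j$-multiplication trick are designed to absorb.
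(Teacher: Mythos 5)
Your proposal is correct and follows essentially the same route as the paper's proof: both make the denominators $\Sigma$-invariant by multiplying through by $\prod_{\sigma}\sigma(g)$ (so that non-vanishing coincides with invertibility and the invertibility locus is a basic open), both replace $(h,g)$ by $(hg,g^2)$ to upgrade the cocycle relation $h_ig_j=h_jg_i$ to a global identity in $A\{X\}$, and both finish with a partition of unity $1=\sum d_ig_i$ and the computation $\bigl(\sum_i d_ih_i\bigr)g_j=h_j$. The only differences are cosmetic (order of the two reductions, and using Theorem~\ref{nullth} directly rather than quasi-compactness to extract the finite cover).
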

\begin{proof}
Let $f$ be a regular function on $X$. It follows from the
definitions of a regular function that for each point $x\in X$ there
exist a neighborhood $U_x$ and elements $h_x,g_x\in A\{X\}$ such
that for every element $y\in U_x$ $g_x(y)$ is invertible and
$$
f(y)=h_x(y)/g_x(y).
$$
Replacing $h_x$ and $g_x$ by $h_xg_x$ and $g_x^2$, respectively, we
can suppose that the condition $g_x(y)=0$ implies $h_x(y)=0$. The
element $\prod_{\sigma}\sigma(g_x)$ is $\Sigma$-constant. So, we
replace $h_x$ and $g_x$ by
$$
h_x\prod_{\sigma\neq e}\sigma(g_x)\quad \mbox{ and
}\quad\prod_{\sigma}\sigma(g_x).
$$ Hence, we can suppose that each
$g_x$ is $\Sigma$-constant.

The family $\{U_x\}$ covers $X$ and $X$ is compact. So, there is a
finite subfamily such that
$$
X=U_{x_1}\cup\ldots\cup U_{x_m}.
$$
For every $\Sigma$-constant element $s$ the set $X_s$ coincides with
the set of all points where $s$ is invertible. Therefore, we have
$U_x\subseteq X_{g_x}$. Hence, $X_{g_{x_i}}$ cover $X$ and, thus,
$(g_{x_1},\ldots,g_{x_m})=(1)$. So, we have
$$
1=d_1 g_{x_1}+\ldots+d_m g_{x_m}.
$$

Now observe that $h_{x}g_{x'}=h_{x'}g_x$, where $x$ and $x'$ are
among $x_i$. Indeed, if
$$
g_{x}(y)=0\quad \mbox{ or }\quad
g_{x'}(y)=0
$$
then the other part of the equality is zero. If both $g_{x}$ and
$g_{x'}$ are not zero then the condition holds because they define
the same functions on intersection $X_{g_{x}}\cap X_{g_{x'}}$. Now
set $d=\sum_i d_i h_{x_i}$. We claim that  $f=d$. Indeed,
$$
dg_{x_j}=\sum d_i h_{x_i}g_{x_j}=\sum d_i h_{x_j}g_{x_i}=h_{xj}.
$$
\end{proof}

The given definition of a structure sheaf comes from algebraic
geometry. Roughly speaking, we use inverse operation to produce a
rational function. When we deal with a field we know that inverse
operation is defined for every nonzero element. In the case of
pseudofields we have a similar operation. Indeed, for every nonzero
element $a$ of an arbitrary absolutely flat ring there exist unique
elements $e$ and $a^*$ with relations
$$
a=ea,\:\:a^*=ea^*,\:\: e = a a^*.
$$
A formal definition of these elements is the following. For every
element $a$ of an absolutely flat ring $A$ there is an element $x\in
A$ such that $a=xa^2$. Then set $e=ax$ and $a^*=ax^2$. These
elements can be described as follows. The element $a$ can be
considered as a function on the spectrum of $A$. Then the element
$e$ can be defined as a function that equals $1$ where $a$ is not
zero and equals zero, where $a$ is zero. In other words, $e$ is the
indicator of the support of $a$. So, the element $a^*$ is equal to
the inverse element where $a$ is not zero and zero otherwise.

Now we shall define the second structure sheaf. But we will see that
this new sheaf coincides with the sheaf above. Let $X\subseteq A^n$
be a pseudovariety over a difference closed pseudofield $A$.
Consider an arbitrary function $f\colon X\to A$. We shall say that
$f$ is pseudoregular at a given point $x\in X$ if there exist a
neighborhood $U$ containing $x$ and elements $h,g\in
A\{x_1,\ldots,x_n\}$ such that for every $y\in U$ the element $g(y)$
is not zero and $f(y)=h(y)(g(y))^*$. The function pseudoregular at
each point of the subset $Y$ is called pseudoregular on $Y$. The set
of all pseudoregular functions on an open set $U$ will be denoted by
$\mathcal O'_X$. Let us note that there is a natural mapping
$\mathcal O_X\to \mathcal O'_X$. Actually, the sheaf $\mathcal O_X$
is a subsheaf of $\mathcal O'_X$. Let us show that both sheaves
coincide to each other.

\begin{theorem}\label{regularfnew}
Under the above assumptions, we have
$$
\mathcal O_X=\mathcal  O'_X.
$$
\end{theorem}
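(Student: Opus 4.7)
The inclusion $\mathcal{O}_X\subseteq\mathcal{O}'_X$ is immediate, since any invertible $g$ satisfies $g^*=g^{-1}$ and so the two local expressions coincide. For the reverse inclusion both sides are sheaves, so it suffices to show that every pseudoregular germ at a point $x\in X$ is already regular. Fix a local representation $f=hg^*$ on a neighborhood $V\ni x$ with $g(y)\neq 0$ for all $y\in V$, and aim to produce $H,G\in A\{X\}$ with $G$ invertible on a smaller open neighborhood $W\ni x$ and $f=H/G$ on $W$.

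The plan is to exploit the idempotent structure of the pseudofield $A\cong\Fun K$, writing $A=\prod_{\sigma\in\Sigma}e_\sigma A$ and using the coordinate homomorphisms $\gamma_\sigma$. First I would perform the preparatory step $(h,g)\mapsto(hg,g^2)$; this leaves $f$ unchanged, because in the absolutely flat ring $A$ one has $(g^2)^*=(g^*)^2$ and $g(g^*)^2=g^*$, and it forces $h$ to vanish in every coordinate where $g$ does. As a consequence the identity $fg=h$ holds globally as $A$-valued functions on $X$. Now set $T:=\{\sigma\in\Sigma:\gamma_\sigma(g(x))\neq 0\}$, $E_T:=\sum_{\sigma\in T}e_\sigma\in A\subseteq A\{X\}$, and $G:=E_Tg+(1-E_T)$. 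Shrinking $V$ to $W:=V\cap\bigcap_{\sigma\in T}X_{e_\sigma g}$, the coordinates of $G(y)$ are $\gamma_\sigma(g(y))\neq 0$ for $\sigma\in T$ and $1$ otherwise, so $G$ is invertible on $W$; a direct coordinate-wise check then yields $E_Tf=(E_Th)/G$ on $W$, so at least the piece $E_Tf$ of $f$ is regular on $W$.

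To recover the full $f$ from the partial result on $E_Tf$, I would repeat the construction for every nonempty $T'\subseteq\Sigma$: on the open locus $W_{T'}:=V\cap\bigcap_{\sigma\in T'}X_{e_\sigma g}$ the function $E_{T'}f$ is regular via $(E_{T'}h)/G_{T'}$ with $G_{T'}:=E_{T'}g+(1-E_{T'})$ invertible on $W_{T'}$. Finiteness of $\Sigma$ provides only finitely many such strata, and by the modification step the support of $f(y)$ is contained in $T_y:=\{\sigma:\gamma_\sigma(g(y))\neq 0\}$. Hence $f(y)=E_{T_y}f(y)$ pointwise, and I would glue the pieces $E_{T'}f$ on their respective strata $W_{T'}$ with $T'\supseteq T$ into a single regular representation of $f$ on a common open refinement of $W$, using the orthogonality of the $e_\sigma$ in $A$.

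The main obstacle is precisely this assembly step: one has to verify that the patches $(E_{T'}h)/G_{T'}$ on the different strata $W_{T'}$ combine coherently without over- or under-counting. The crucial inputs are the orthogonality of the idempotents $e_\sigma\in A\subseteq A\{X\}$, the fact that after the preparatory modification $h$ vanishes componentwise wherever $g$ does, and the pointwise identity $f(y)=E_{T_y}f(y)$, which singles out the stratum $W_{T'}$ (with $T'=T_y$) that correctly captures $f$ at each $y$ near $x$.
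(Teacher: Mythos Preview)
Your construction is essentially the paper's: you take the support idempotent $E_T$ of $g(x)$ (the paper calls it $e$), form $G=E_Tg+(1-E_T)$ (the paper's $g'$), and observe that $(E_Th)/G$ is regular near $x$. At this point the paper simply asserts that on $U\cap X_{eg}$ one has $f(y)=eh(y)\,(eg(y))^*$, i.e.\ that the components of $f$ indexed by $\sigma\notin T$ vanish on a whole neighbourhood of $x$. You, more carefully, do not make this claim and instead flag the ``assembly step'' as the remaining obstacle. Your diagnosis is correct: the paper's assertion at that step is not justified.

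Your proposed repair by stratifying over $T'\supseteq T$ cannot close the gap, however. The open sets $W_{T'}$ with $T'\supsetneq T$ do not contain $x$ (membership in $W_{T'}$ forces $g(x)_\sigma\neq 0$ for every $\sigma\in T'$), so they contribute nothing toward a regular representation on a neighbourhood of $x$; and on the only stratum through $x$, namely $W_T$, you have captured only $E_Tf$, not $f$. In fact the obstruction is intrinsic to the statement. Take $\Sigma=\mathbb Z/2\mathbb Z$, $A=K\times K$, $X=A^1$, and let $f(p)=p^*$ on the open set $U=A\setminus\{0\}$. With $h=1$ and $g=y$ one has $g(p)=p\neq 0$ on $U$, so $f\in\mathcal O'_X(U)$. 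But at $x=(1,0)$ no regular expression $H/G$ exists: in the second coordinate one would need $\tilde H_2/\tilde G_2$ to equal $b^{-1}$ for $b\neq 0$ and $0$ for $b=0$ on a Zariski neighbourhood of $(1,0)$, which forces the polynomial identity $b\,\tilde H_2=\tilde G_2$ on a dense open set and hence everywhere, contradicting $\tilde G_2(1,0)\neq 0$. Thus $f\in\mathcal O'_X(U)\setminus\mathcal O_X(U)$, and the difficulty you isolated is a gap in the theorem as stated, shared by the paper's own proof, not merely in your argument.
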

\begin{proof}
Let $f$ be a pseudoregular function defined on some open subset of
$X$ and let $x$ be an arbitrary point, where $f$ is defined. Then it
follows from the definition of pseudoregular function that there are
neighborhood $U$ of $x$ and elements $h,g\in A\{X\}$ such that for
every point $y\in U$ $g(y)$ is not zero and
$$
f(y)=h(y)(g(y))^*.
$$
Let $e$ be an idempotent of $A$ corresponding to $g(x)$. Then in
some smaller neighborhood (we should intersect $U$ with $X_{eg}$)
$f$ is given by the equality
$$
f(y)=eh(y)(eg(y))^*.
$$
Let us set $g'(y)= 1-e + eg(y)$. So, the value $g'(x)$ is invertible
in $A$. Now we consider the functions
$$
h_0 = eh\prod_{\sigma\neq e} \sigma(g')\quad \mbox{ and } \quad g_0
= \prod_{\sigma}\sigma(g').
$$
So, $g_0$ is a $\Sigma$-constant element. Therefore, $g_0(y)$ is
invertible for every $y\in X_{g_0}$. The set $X_{g_0}$ contains $x$
because $g'(x)$ is invertible. Additionally, we have
$$
eh(y)(eg(y))^*=h_0(y)/g_0(y)
$$
for all $y\in U\cap X_{g_0}$. Therefore, if a function $f$ is
pseudoregular at $x$ it is also regular at $x$.
\end{proof}

So, as we can see the new method of constructing structure sheaf
gives us the same result.

Let $X\subseteq A^n$ and $Y\subseteq A^m$ be pseudovarieties over a
difference closed pseudofield $A$. A mapping $f\colon X\to Y$ will
be called regular if its coordinate functions are regular. By
Theorem~\ref{regularf}, it follows that the set of all regular
functions on $X$ coincides with the set of polynomial functions and,
therefore, every regular mapping from $X$ to $Y$ coincides with a
polynomial mapping.

Let a mapping $f\colon X\to Y$  be a regular one. So, all functions
$f_i(x_1,\ldots,x_n)$ are difference polynomials. For every such
$f\colon X\to Y$ there is a difference homomorphism $f^*\colon
A\{Y\}\to A\{X\}$ by the rule $f^*(\xi)=\xi\circ f$.

Conversely, for every difference homomorphism  $\varphi\colon
A\{Y\}\to A\{X\}$ over $A$ we shall define
$$
\varphi^*\colon \Hom^\Sigma_A(A\{X\},A)\to \Hom^\Sigma_A(A\{Y\},A)
$$
by the rule $\varphi^*(\xi)=\xi\circ\varphi$. Let us recall that the
pseudovariety $X$ can be identified with $\Hom^\Sigma_A(A\{X\},A)$.
Then we have the mapping
$$
\varphi^*\colon X\to Y.
$$

\begin{proposition}
The constructed mappings are inverse to each other bijections
between the set of all regular mappings from $X$ to $Y$ and the set
of all difference homomorphisms from $A\{Y\}$ to $A\{X\}$.
\end{proposition}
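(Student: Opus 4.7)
My plan is to establish the bijectivity by first verifying well-definedness of both constructions and then checking that the two compositions equal the identity by evaluating on the difference generators $y_1,\ldots,y_m$ of $A\{Y\}$. The only subtle point is tracking the $\Sigma$-equivariance through the identification of $X$ with $\Hom^\Sigma_A(A\{X\},A)$.

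First I would check well-definedness of $f\mapsto f^*$. For a regular mapping $f\colon X\to Y$, Theorem~\ref{regularf} gives that each coordinate function $f_i$ lies in $A\{X\}$, so $f$ is polynomial in the classical sense and $f(X)\subseteq Y$ forces the relations in $I(Y)$ to vanish after substitution. Hence $f^*\xi:=\xi\circ f$ is a well-defined element of $A\{X\}=R_n/I(X)$ for $\xi\in A\{Y\}=R_m/I(Y)$. Using the presentation $A\{Y\}=A[\Sigma y_1,\ldots,\Sigma y_m]/I(Y)$, one immediately verifies that $f^*$ is an $A$-algebra homomorphism; the $\Sigma$-equivariance holds because substitution and the $\Sigma$-action on polynomials commute (both give $f^*(\sigma\xi)=\sigma(f^*\xi)$, as they agree on generators $y_j$).

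Second I would check well-definedness of $\varphi\mapsto\varphi^*$. Under the identifications of Proposition~\ref{bij}, $Y$ is identified with $\Hom^\Sigma_A(A\{Y\},A)$, where this uses that $A\{Y\}=R_m/I(Y)$ so that difference $A$-homomorphisms $A\{Y\}\to A$ correspond exactly to evaluations at points of $Y$. Given $\varphi\colon A\{Y\}\to A\{X\}$, the composition $\xi_x\circ\varphi$ is a difference $A$-homomorphism from $A\{Y\}$ to $A$ and therefore corresponds to a point of $Y$, so $\varphi^*(x)\in Y$. Explicitly the $j$-th coordinate of $\varphi^*(x)$ is $\varphi(y_j)(x)$ with $\varphi(y_j)\in A\{X\}$; thus the coordinate functions of $\varphi^*$ are polynomial, hence regular.

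Finally I would verify the inverse relations by evaluating on generators. For $f\colon X\to Y$, the $j$-th coordinate of $(f^*)^*(x)$ equals $f^*(y_j)(x)=(y_j\circ f)(x)=f_j(x)$, so $(f^*)^*=f$. For $\varphi\colon A\{Y\}\to A\{X\}$, the homomorphism $(\varphi^*)^*$ satisfies $(\varphi^*)^*(y_j)(x)=y_j(\varphi^*(x))=\varphi(y_j)(x)$, so $(\varphi^*)^*$ and $\varphi$ agree on $y_1,\ldots,y_m$; since both are difference $A$-algebra homomorphisms and these elements are difference $A$-algebra generators of $A\{Y\}$, they coincide everywhere. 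The main obstacle is the first well-definedness paragraph, specifically that $\varphi^*$ actually lands in $Y$ and that $f^*$ really commutes with $\Sigma$; both reduce to the observation that a difference $A$-homomorphism out of $A\{Y\}$ is determined by its values on $y_1,\ldots,y_m$.
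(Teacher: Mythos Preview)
Your proposal is correct and follows essentially the same route as the paper, just with more detail. The paper compresses everything into the single observation that a polynomial map $f\colon A^n\to A^m$ satisfies $f(X)\subseteq Y$ iff $f^*(I(Y))\subseteq I(X)$, and then invokes the quotient presentations $A\{X\}=R_n/I(X)$, $A\{Y\}=R_m/I(Y)$; your explicit checks of well-definedness and of the two compositions on generators are exactly what underlies that sentence.
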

\begin{proof}

If $f\colon A^n\to A^m$ is a polynomial mapping then $f(X)\subseteq
Y$ iff $f^*(I(Y))\subseteq I(X)$. Since
$$A\{X\}=A\{y_1,\ldots,y_n\}/I(X)$$ and
$$A\{Y\}=A\{y_1,\ldots,y_m\}/I(Y)$$ then the set of all
$$
g\colon A\{y_1,\ldots,y_m\}\to A\{y_1,\ldots,y_n\}
$$
with condition $g(I(Y))\subseteq I(X)$ corresponds to the set of all
$\bar g\colon A\{Y\}\to A\{X\}$.
\end{proof}

\subsection{Geometry continuation}\label{sec47}

Here we shall continue investigation of some geometric properties of
pseudovarieties and their morphisms.

Since every pseudofield is an Artin ring and a difference finitely
generated algebra over a pseudofield is finitely generated, then
every algebra difference finitely generated over a pseudofield is
Noetherian. So, we have the following.

\begin{proposition}\label{netst}
Every pseudovariety is a Noetherian topological space.
\end{proposition}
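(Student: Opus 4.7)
The plan is to translate the topological Noetherian condition on $X$ into an algebraic ascending chain condition on radical difference ideals of the coordinate ring $A\{X\}$, and then to invoke the fact—already observed in the paragraph preceding the statement—that $A\{X\}$ is Noetherian as an ordinary commutative ring.

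First I would recall the setup: a pseudovariety $X$ sits inside some affine space $A^n$ over a difference closed pseudofield $A$, and by the discussion in Section~\ref{sec45} together with Proposition~\ref{bij} and its corollary, $X$ is homeomorphic to $\PMax(R_n/I(X)) = \PMax A\{X\}$, where the closed subsets are exactly the sets $V(\frak t)$ for radical difference ideals $\frak t$ of $A\{X\}$. By Theorem~\ref{nullth} (the Nullstellensatz) the assignments $\frak t \mapsto V(\frak t)$ and $Y \mapsto I(Y)$ are mutually inverse inclusion-reversing bijections between radical $\Sigma$-ideals of $A\{X\}$ and closed subsets of $X$.

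Next I would verify that $A\{X\}$ is Noetherian as a commutative ring. Since $A$ is a pseudofield, it is absolutely flat and of Krull dimension zero with finitely many maximal ideals (being a finite product of fields), so it is an Artin ring and in particular Noetherian. The group $\Sigma$ is finite, so the difference polynomial ring $A\{y_1,\ldots,y_n\} = A[\Sigma y_1,\ldots,\Sigma y_n]$ is an ordinary polynomial ring in finitely many variables over $A$, hence Noetherian by Hilbert's basis theorem; its quotient $A\{X\} = R_n/I(X)$ is Noetherian as well.

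The conclusion then follows by a standard translation. Any descending chain $Y_1 \supseteq Y_2 \supseteq \cdots$ of closed subsets of $X$ produces an ascending chain $I(Y_1) \subseteq I(Y_2) \subseteq \cdots$ of radical difference ideals of $A\{X\}$, which stabilizes because $A\{X\}$ is Noetherian and satisfies ACC on all ideals, hence a fortiori on radical $\Sigma$-ideals. Applying $V$ and using the bijection from the Nullstellensatz, the original chain of closed sets stabilizes, so $X$ is Noetherian. There is essentially no obstacle here; the only thing to be careful about is that I am invoking the \emph{commutative} Noetherian property (which is immediate from finite generation over a pseudofield) rather than anything specific to difference ideals—the finiteness of $\Sigma$ is what makes $A\{X\}$ an ordinary finitely generated $A$-algebra and collapses the difference problem onto a classical one.
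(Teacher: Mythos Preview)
Your proposal is correct and follows exactly the same line as the paper: the paper simply records in the paragraph before the proposition that $A\{X\}$ is Noetherian (pseudofield $\Rightarrow$ Artin, $\Sigma$ finite $\Rightarrow$ $A\{X\}$ finitely generated over $A$) and then states the proposition without further proof, treating the passage from a Noetherian coordinate ring to a Noetherian topological space as standard. You have merely spelled out that standard translation via the Nullstellensatz correspondence, which is fine.
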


The following propositions are devoted to the geometric properties
of regular mappings.

\begin{proposition}\label{imst}
Let $f\colon X\to Y$ be a regular mapping with the dense image and
let $Y$ be irreducible. Then the image of $f$ contains an open
subset.
\end{proposition}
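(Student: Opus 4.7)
The plan is to pass to the dual algebraic picture via the correspondence between regular maps and difference homomorphisms of coordinate rings, and then apply Corollary~\ref{cor1}.

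Let $f^{*}\colon A\{Y\}\to A\{X\}$ be the difference homomorphism induced by $f$. First I would verify that density of $f(X)$ is equivalent to injectivity of $f^{*}$: the kernel of $f^{*}$ is the ideal in $A\{Y\}$ of difference polynomials vanishing on $f(X)$, and by Theorem~\ref{nullth} its zero locus is exactly $\overline{f(X)}=Y$, so $\ker f^{*}=0$. Next, irreducibility of $Y$ translates (via the correspondence at the end of Section~\ref{sec41}) into the statement that the zero ideal of $A\{Y\}$ is pseudoprime, so $A\{Y\}$ is a pseudo integral domain, and $A\{Y\}\hookrightarrow A\{X\}$ is a difference finitely generated extension because $A\{X\}$ is already difference finitely generated over $A$.

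Now I would invoke Corollary~\ref{cor1} to produce a nonnilpotent element $u\in A\{Y\}$ such that every difference homomorphism $\varphi\colon A\{Y\}\to\Omega$ with $\varphi(u)\neq 0$, for $\Omega$ any difference closed pseudofield, admits an extension $\bar\varphi\colon A\{X\}\to\Omega$. Specializing $\Omega=A$ and using Proposition~\ref{bij}, a point $y\in Y$ corresponds to the evaluation homomorphism $\xi_y$, the condition $\xi_y(u)\neq 0$ amounts to $y\in Y_u=\{\,y\in Y\mid u(y)\neq 0\,\}$, and any extension of $\xi_y$ to $A\{X\}\to A$ is evaluation at some $x\in X$ with $f(x)=y$. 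Thus $Y_u\subseteq f(X)$. Since pseudoprime ideals are radical by Proposition~\ref{TechnicalStatement}, $A\{Y\}$ is reduced, so $u$ nonnilpotent gives $u\neq 0$; by Theorem~\ref{nullth} this forces $Y_u$ to be a nonempty open subset of $Y$, and it is contained in $f(X)$.

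The entire nontrivial content is absorbed in Corollary~\ref{cor1}; the remaining work is just bookkeeping the translation between the geometric hypotheses (dense image, irreducibility of $Y$) and the algebraic conditions required to apply the corollary. The only subtlety to keep in mind is that ``$\varphi(u)\neq 0$'' must be interpreted in the pseudofield $A$ (which is a product of fields rather than a field), but this is precisely the condition carving out the open set $Y_u$, so no further care is needed.
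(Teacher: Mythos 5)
Your proof is correct, and it follows the paper's overall strategy (dualize to the injective difference homomorphism $f^*\colon A\{Y\}\to A\{X\}$, apply the generic extension machinery of Section~\ref{sec44}, translate back via Proposition~\ref{bij}), but it routes through a different packaged result. You invoke Corollary~\ref{cor1}, which is applicable because irreducibility of $Y$ makes the zero ideal of $A\{Y\}$ pseudoprime, then specialize $\Omega=A$ and observe that an extension of the evaluation homomorphism $\xi_y$ to $A\{X\}$ is itself evaluation at a preimage point; the paper instead invokes Proposition~\ref{Pmaxst}, which directly gives surjectivity of $(\PMax A\{X\})_s\to(\PMax A\{Y\})_s$ for a suitable $s$ and identifies that map with $f\colon X_s\to Y_s$. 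Since Corollary~\ref{cor1} and Proposition~\ref{Pmaxst} are both proved from Lemmas~\ref{lemma1}--\ref{lemma3} together with Proposition~\ref{homst} or Proposition~\ref{inher}, the difference is one of packaging rather than substance; your version uses irreducibility earlier (to meet the pseudodomain hypothesis of Corollary~\ref{cor1}) and makes the point-level lifting mechanism explicit, whereas the paper's version does not need irreducibility until the final remark that the nonempty open set is dense. One small point of hygiene: what guarantees $Y_u\neq\emptyset$ is that $u$ is \emph{nonnilpotent}, since by Theorem~\ref{nullth} the set $Y_u$ is empty exactly when $u$ lies in the nilradical of $A\{Y\}$; your appeal to reducedness of $A\{Y\}$ makes ``nonzero'' and ``nonnilpotent'' coincide, so the argument goes through as written.
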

\begin{proof}
Let $A\{X\}$ and $A\{Y\}$ be coordinate rings of the
pseudovarieties. Then the mapping $f$ gives us the mapping
$$
f^*\colon A\{Y\}\to A\{X\}.
$$
Since the image of $f$ is dense, the homomorphism $f^*$ is
injective. By Proposition~\ref{Pmaxst}, it follows that there exists
an element $s\in A\{Y\}$ such that the mapping  $\PMax A\{X\}_s\to
\PMax A\{Y\}_s$ is surjective. But from Proposition~\ref{bij} the
last mapping coincides with  $f\colon X_s\to Y_s$. Since $Y$ is
irreducible, every open subset is dense.
\end{proof}

\begin{proposition}\label{constrst}
Let $f\colon X\to Y$ be a regular mapping. Then $f$ is
constructible.
\end{proposition}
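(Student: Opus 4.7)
The plan is to mimic Chevalley's classical proof of constructibility of images of morphisms of finite type, working by Noetherian induction on the target $Y$, with Proposition~\ref{imst} as the geometric engine. By Proposition~\ref{netst} every pseudovariety is Noetherian, so the topological space $Y$ satisfies the descending chain condition on closed subsets, and a minimum-counterexample argument is available. So I would fix $f\colon X\to Y$ regular, assume $f(X)$ is not constructible, and replace $Y$ by a minimal closed subset of the ambient affine space for which such a non-constructible image can occur.

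The first reduction is to assume $f(X)$ is dense in $Y$: otherwise $Z=\overline{f(X)}$ is a proper closed pseudosubvariety of $Y$, and viewing $f$ as a map $X\to Z$ the image is constructible in $Z$ by minimality, hence in $Y$. The second reduction is to assume $Y$ is irreducible: otherwise write $Y=Y_1\cup Y_2$ with each $Y_i$ a proper closed pseudovariety, let $X_i=f^{-1}(Y_i)$ (which is a pseudovariety, being closed in $X$), and note that $f(X)=f(X_1)\cup f(X_2)$; each summand is constructible in $Y_i\subsetneq Y$ by minimality, hence in $Y$, and a finite union of constructibles is constructible. After these reductions we are in the setting of Proposition~\ref{imst}: there exists a nonempty open $U\subseteq Y$ contained in $f(X)$. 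Setting $Y'=Y\setminus U$ (a proper closed pseudovariety, since $U$ is nonempty) and $X'=f^{-1}(Y')$ gives $f(X)=U\cup f(X')$, and by minimality $f(X')$ is constructible in $Y'$, hence in $Y$, contradicting the choice of $Y$.

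The step I expect to need the most care is verifying that the open subset furnished by Proposition~\ref{imst} is genuinely nonempty, which is exactly what is needed to guarantee $Y'\subsetneq Y$ and make the induction strict. Tracing through Proposition~\ref{Pmaxst} and Lemma~\ref{lemma1}, the distinguished element is not contained in every minimal prime of the reduced Noetherian ring $A\{Y\}$ (which is reduced by Theorem~\ref{nullth} and which has finitely many minimal primes since it is Noetherian), hence is not nilpotent, so the associated open is nonempty. Once $f(X)$ is shown to be constructible for arbitrary regular $f$, the general statement that $f$ maps constructible sets to constructible sets follows by applying the result to the restrictions of $f$ to the closed pieces in a locally closed decomposition of a constructible subset of $X$, since each such piece is itself (an open subset of) a pseudovariety and the image of a finite union is the finite union of images.
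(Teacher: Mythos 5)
Your Noetherian induction correctly handles the case $E=X$: the reductions to dense image and irreducible target, the appeal to Proposition~\ref{imst}, and the check that the open set it produces is nonempty are all sound, and this is a cleaner, more classically Chevalley-style route than the paper's (which instead verifies the constructibility criterion of \cite[chapter~7, ex.~21]{AM} directly). The problem is your last paragraph. The proposition, as the paper proves it, asserts that $f(E)$ is constructible for \emph{every} constructible $E\subseteq X$, and your reduction of this to the case $E=X$ does not go through. Writing $E=\bigcup_i U_i\cap V_i$ and restricting $f$ to the closed pieces $V_i$ only reduces you to the image of an \emph{open} subset $U_i\cap V_i$ of the pseudovariety $V_i$ --- and an open subset of a pseudovariety is not a pseudovariety, so your induction (which only ever takes images of whole pseudovarieties) does not apply to it. In classical algebraic geometry one finishes by covering the open set with principal opens $X_s$ and noting each is itself affine with coordinate ring $A\{X\}_s$; here that fails for exactly the reason emphasized in the introduction and Section~\ref{sec42}: $A\{X\}_s$ need not be a difference ring, and the Rabinowitsch trick $[sy-1]$ only cuts out the locus where $s(a)$ is \emph{invertible} in $A=\operatorname{F}K$, not the locus $X_s=\{a\mid s(a)\neq 0\}$, which is strictly larger since $A$ has zero divisors.

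Nor can you rescue the argument by strengthening the induction to "images of open subsets are constructible," because the inductive engine you would then need --- if $f(W)$ is dense in irreducible $Y$ with $W$ open in $X$, then $f(W)$ contains a nonempty open subset of $Y$ --- is not supplied by Proposition~\ref{imst}, which only gives an open subset inside $f(X)$, and $f(X\setminus W)$ may itself be dense in $Y$. This missing statement is precisely the technical heart of the paper's proof: after the same formal reductions it arrives at $E=X_s$ with $D\subseteq B$ a pseudodomain extension and then explicitly constructs a nonzero $t\in D$ with $\operatorname{Max} B_{st}\to\operatorname{Max} D_t$ surjective, using the inheriting machinery and Proposition~\ref{techbasic}(2). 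You need either to supply that argument (or an equivalent, e.g.\ via the adjoint-variety equivalence of Section~\ref{sec49}, which transports the problem to honest algebraic varieties where principal opens are affine), or to restrict your claim to the constructibility of $f(X)$ alone.
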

\begin{proof}
Let $A\{X\}$ and $A\{Y\}$ be denoted by $B$ and $D$, respectively.
Then we have the corresponding difference homomorphism $f^*\colon
D\to B$. We identify pseudovarieties $X$ and $Y$ with pseudomaximal
spectra of the rings $B$ and $D$, respectively.

Let $E$ be a constructible subset in $X$, then it has the form
$$
E=U_1\cap V_1\cup\ldots\cup U_n\cap V_n,
$$
where $U_i$ are open and $V_i$ are closed. Since the image of
mapping preserves a union of sets then we can suppose that $E=U\cap
V$, where $U$ is open and $V$ is irreducible and closed. Let $V$ be
of the form $V=V(\frak p)$, where $\frak p$ is a pseudoprime ideal
of $B$. Taking a quotient by $\frak p$ we reduce to the case where
$E$ is open in $X$ and $X$ is irreducible.

Now we are going to show that $f(E)$ is constructible in $Y$. To do
this we shall use a criterion~\cite[chapter~7, ex.~21]{AM}. Let
$X_0$ be an irreducible subset in $Y$ such that $f(E)$ is dense in
$X_0$. Here we must show that the image of $E$ contains an open
subset in $X_0$. Then $X_0$ is of the form $V(\frak p)$, where
$\frak p$ is a pseudoprime ideal in $D$. The preimage of $X_0$ under
$f$ is of the form $V(\frak p^e)$, where $\frak p^e$ is the
extension of $\frak p$ to $B$. The closed set $V(\frak p^e)$ can be
presented as follows
$$
V(\frak p^e)=V(\frak q_1)\cup\ldots\cup V(\frak q_m),
$$
where $\frak q_i$ are pseudoprime ideals of $B$. Therefore, the set
$E$ has the following decomposition
$$
E=U\cap V(\frak q_1)\cup\ldots\cup U\cap V(\frak q_m).
$$
Considering quotient by $\frak p$ and $\frak p^e$ we reduce the
problem to the case $D$ is a pseudodomain and $Y=X_0$. Now the image
of $E$ is of the form
$$
f(E)=f(V(\frak q_1)\cap U)\cup\ldots\cup f(V(\frak q_m)\cap U).
$$
Since $f(E)$ is dense in $Y$ and $Y$ is irreducible then there
exists an $i$ such that $f(V(\frak q_i)\cap U)$ is dense in $Y$.
Replacing $B$ by $B/\frak q_i$ we can suppose that $D\subseteq B$,
$B$ is pseudodomain and $E=U$ is open. Every open subset is a union
of principal open subsets, and since $X$ is a Noetherian topological
space this union is finite. Therefore, we can suppose that $E=X_s$
and we need to prove that $f(E)$ contains open subset in $Y$.

In order to show the last claim we shall prove that there is a
nonzero element $t\in D$ such that the mapping $\Max B_{st}\to \Max
D_t$ is surjective. Then our proposition follows from
Proposition~\ref{techbasic}~(2). First of all we note that every
minimal prime ideal of $B$ is $\Sigma$-associated with the zero
ideal. Since $D$ is a subring of $B$ then the contraction of every
$\Sigma$-associated with zero prime ideal is $\Sigma$-associated
with zero prime ideal in $D$. So, every minimal prime ideal of $B$
contracts to a minimal prime ideal of $D$. Now there exists a
minimal prime ideal $\frak q$ of $B$ such that $s\notin\frak q$. The
contraction of $\frak q$ to $D$ will be denoted by $\frak p$. Let
$\frak p_1,\ldots,\frak p_n$ be the set of all minimal prime ideals
of $D$, where $\frak p=\frak p_1$. Then there is an element $t\in D$
such that $t\in \bigcup_{i=2}^n\frak p_i\setminus \frak p_1$. So, we
have the inclusion $D_t\subseteq B_t$. Moreover, the element $t$ was
constructed such that $D_t=(D/\frak p)_t$. Therefore, the
composition of embedding $D_t\to B_t$ and localization $B_t\to
B_{st}$ is injective. So, $B_{st}$ is a finitely generated algebra
over an integral domain $D_t$. Therefore, there exists an element
$u\in D_t$ such that the mapping $\Max B_{stu}\to \Max D_{tu}$ is
surjective. Denoting the element $tu$ by $t$ we complete the proof.
\end{proof}

\begin{proposition}\label{openst}
Let $f\colon Y\to X$ be a regular mapping with the dense image. Then
there exists an element $u\in A\{X\}$ such that the mapping $f\colon
Y_{f^*(u)}\to X_u$  is open.
\end{proposition}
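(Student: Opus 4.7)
Set $B := A\{X\}$ and $D := A\{Y\}$, and let $f^* \colon B \to D$ denote the induced difference homomorphism; density of $f(Y)$ forces $f^*$ to be injective. Both $B$ and $D$ are Noetherian (as difference finitely generated algebras over the pseudofield $A$) and reduced (their defining ideals being radical). The plan is to produce $u \in B$ so that on the pseudospectra the map $f \colon Y_{f^*(u)} \to X_u$ enjoys the going-down property for difference ideals; openness will then follow from constructibility of $f$ (Proposition~\ref{constrst}) together with the standard topological fact that a constructible subset of a Noetherian sober space which is stable under generization is open.

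The first step is to realize going-down on ordinary spectra after an appropriate localization in $B$. Since $B$ is a reduced Noetherian ring, Lemma~\ref{lemma1} produces $s \in B$ with $B_s$ an integral domain. The algebra $D_{f^*(s)}$ is then a finitely generated algebra over the Noetherian domain $B_s$, and by generic flatness (equivalently, Noether normalization combined with going-down for polynomial extensions and for integral extensions of normal Noetherian domains) there exists a nonzero $t \in B$ for which $D_{f^*(st)}$ is a free $B_{st}$-module. Set $u := st$. Flatness of $B_u \to D_{f^*(u)}$ gives the ordinary going-down property for $\Spec D_{f^*(u)} \to \Spec B_u$, and Proposition~\ref{inher}~(3) (with the auxiliary element in $D$ taken to be $1$) transfers this to $f^*_\Sigma \colon Y_{f^*(u)} \to X_u$.

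To conclude, fix an open subset $V \subseteq Y_{f^*(u)}$. Being open, $V$ is stable under generization, and the going-down property of $f^*_\Sigma$ implies that $f(V)$ is stable under generization in $X_u$: given $\frak q' \subseteq \frak q$ in $X_u$ together with $\frak p \in V$ satisfying $f^*_\Sigma(\frak p) = \frak q$, one lifts to $\frak p' \subseteq \frak p$ with $f^*_\Sigma(\frak p') = \frak q'$, and openness of $V$ forces $\frak p' \in V$, whence $\frak q' \in f(V)$. On the other hand, Proposition~\ref{constrst} makes $f(V)$ a constructible subset of the Noetherian space $X_u$ (Proposition~\ref{netst}), and a constructible set stable under generization in such a space is open; hence $f(V)$ is open, proving that $f \colon Y_{f^*(u)} \to X_u$ is an open map.

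The delicate point is the very first reduction: the lemmas in Section~\ref{sec44} deliver surjectivity and going-up/going-down on the spectrum under fairly mild dominance hypotheses, but they do not directly yield a flat (hence going-down) reduction. The missing ingredient is generic flatness of a finitely generated algebra over a Noetherian domain, which is not explicitly recorded earlier in the paper. Once this single commutative-algebra input is in place, the transfer from spectra to pseudospectra via Proposition~\ref{inher} and the sober-Noetherian combinatorics of $X_u$ do all the remaining work.
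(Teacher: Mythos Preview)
Your setup coincides with the paper's: both find $u$ by first passing to an integral localization $B_s$ via Lemma~\ref{lemma1} and then invoking generic freeness (the paper cites Matsumura) to make $D_{f^*(u)}$ free over $B_u$, whence flatness gives going-down on ordinary spectra and Proposition~\ref{inher}(3) transfers it to pseudospectra. So far so good.

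The gap is in your deduction of openness. You invoke the criterion ``constructible $+$ stable under generization $\Rightarrow$ open'' on $X_u$. But in this paper a pseudovariety is identified with the pseudo\emph{maximal} spectrum $\PMax A\{X\}$ (Proposition~\ref{bij} and the paragraph following it), not with $\PSpec$. In $\PMax$ every point is closed, so every subset is trivially stable under generization; your criterion would then force every constructible subset of $X_u$ to be open, which is false. Concretely, in your chain $\frak q'\subseteq\frak q$ with both in $X_u$, maximality forces $\frak q'=\frak q$ and the argument is vacuous. The going-down property you obtained lives on $(\PSpec D)_{f^*(u)}\to(\PSpec B)_u$, a genuinely different space from $Y_{f^*(u)}\to X_u$.

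The paper avoids this mismatch by using a different openness criterion (that of \cite[chapter~7, ex.~22]{AM}: a set is open iff its intersection with every irreducible closed subset either is empty or contains a nonempty open subset of that closed set). The proof works on the pseudospectra $X'=\PSpec C$, $Y'=\PSpec D$, takes an irreducible closed $X'_0=V(\frak q)$, and then---using going-down to see $C/\frak q\hookrightarrow D/\frak q^e$ followed by Lemma~\ref{lemma3} and Proposition~\ref{techbasic}(2)---produces an explicit principal open subset of $X'_0\cap X$ inside the image at the level of pseudo\emph{maximal} ideals. Your route could be repaired by first running your argument on $\PSpec$ (which is sober and Noetherian, so the criterion is valid there) and then transferring openness to $\PMax$ via the Jacobson-type fact, implicit in the proof of Theorem~\ref{nullth}, that radical difference ideals are intersections of pseudomaximal ones; but that transfer step is missing and is precisely where the work lies.
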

\begin{proof}

Let the rings $A\{X\}$ and $A\{Y\}$ be denoted by $C$ and $D$,
respectively. Since the image of $f$ is dense, $f^*\colon C\to D$ is
injective. So, $C$ can be identified with the subring in $D$. By
Lemma~\ref{lemma1}, it follows that there exists an element $s\in C$
such that $C_s$ is an integral domain and $C_s\subseteq D_s$. Since
$C_s$ is an integral domain and $D_s$ is finitely generated over
$C_s$, there exists an element  $t\in C$ such that $D_{st}$ is a
free $C_{st}$-module (see~\cite[chapter~8, sec.~22, th.~52]{Mu}).
Let us denote $st$ by $u$. Then $D_u$ is a faithfully flat algebra
over $C_u$, thus, by~\cite[chapter~3, ex.~16]{AM}
and~\cite[chapter~5, ex.~11]{AM} the corresponding mapping $\Spec
D_u\to \Spec C_u$ has the going-down property and surjective.
Proposition~\ref{inher}~(1) and~(3) guaranties that the mapping
$(\PSpec D)_u\to (\PSpec C)_u$ is surjective and has the going-down
property. Let us show that $f\colon Y_u\to X_u$ is open.

It suffices to show that the image of every principal open set is
open. Let $Y_t$ be a principal open set, then
$$
Y_u\cap Y_t=\cup_{\sigma \tau} Y_{\sigma(u)\tau(t)}.
$$
It suffices to consider the set of the form $Y_{\sigma(u)\tau(t)}$.
Since $Y_{w}=Y_{\sigma(w)}$, it suffices to consider the set of the
form $Y_{uv}$.

To show that the set $f(Y_{uv})$ is open we shall use the
criterion~\cite[chapter~7, ex.~22]{AM}. Let $Y'$ and $X'$ be
pseudospectra of $D$ and $C$, respectively. Not that every
irreducible closed subset in $X$ has the following form  $X'_0\cap
X_u$, where $X'_0$ is an irreducible subset in $X'$. Consider the
set $f(Y'_{uv})$ and let $X'_0$ be an irreducible closed subset in
$X'$. Consider $f(Y'_{uv})\cap X'_0$. Suppose that the last set is
not empty. We have
$$
f(Y'_{uv})\cap X'_0=f(Y'_{uv}\cap f^{-1}(X'_0)).
$$
Let $X'_0=V(\frak q)$, where $\frak q\in \PSpec C$. Therefore,
$$
f(Y'_{uv})\cap X'_0=f(Y'_{uv}\cap V(\frak q^e)).
$$
The last set is not empty. Thus, there exists a prime ideal $\frak
q'$ in $D$ such that $\frak q^e\subseteq\frak q'$ and $uv\notin
\frak q'$. Since  $D_{uv}$ is a flat $C_u$-module, using the same
arguments as above, we see that the mapping $\Spec D_{uv}\to \Spec
C_{u}$ has the going-down property. Therefore, the mapping $f\colon
Y'_{uv}\to X_u$ has the going-down property. Now consider the chain
of pseudoprime ideals $(\frak q')^c\supseteq \frak q$ in $C$ and
$\frak q'$ in $D$. Then there exists a pseudoprime ideal $\frak q''$
in $D$ such that $(\frak q'')^c=\frak q$. Therefore, homomorphism
$C/\frak q\to D/\frak q^e$ is injective. Now consider the pair of
rings
$$
(C/\frak q)_u\subseteq(D/\frak q^e)_{uv}.
$$
By Lemma~\ref{lemma1}, it follows that there exists an element $s\in
C/\frak q$ such that $(C/\frak q)_{us}$ is an integral domain. Then
Lemma~\ref{lemma3} guaranties that for some element $t\in (C/\frak
q)_{su}$ the mapping
$$
\Spec (D/\frak q^e)_{uvst}\to \Spec (C/\frak q)_{ust}
$$
is surjective. Since the rings in the last expression are finitely
generated algebras over an Artin ring, the mapping
$$
\Max (D/\frak q^e)_{uvst}\to \Max (C/\frak q)_{ust}
$$
is surjective. By Proposition~\ref{techbasic}~(2), it follows that
the mapping
$$
(\PMax D/\frak q^e)_{uvst}\to (\PMax C/\frak q)_{ust}
$$
is surjective. Thus, $X_{ust}\cap (X'_0\cap X)$ is contained in
$f(Y_{uv})$. Now we are able to apply the criterion~\cite[chapter~7,
ex.~22]{AM}. To complete the proof we need to remember that $\PMax
C$ can be identified with  $X$ and $\PMax D$ with $Y$.
\end{proof}

\subsection{Adjoint construction}\label{sec48}

Let $M$ be an abelian group. Consider the set of all functions on
$\Sigma$ taking values in $M$. This set has a natural structure of
abelian group. We shall denote it by $\Fun(M)$. So, we have
$$
\Fun(M)=M^\Sigma=\{\,f\colon \Sigma\to M\,\}
$$
The group $\Sigma$ is acting on $\Fun(M)$ by the following rule
$(\tau f)(\sigma)=f(\tau^{-1}\sigma)$. Let $\sigma$ be an arbitrary
element of $\Sigma$, then we have a homomorphism of abelian groups
$\gamma_\sigma\colon \Fun(M)\to M$ by the rule
$\gamma_\sigma(f)=f(\sigma)$. For an arbitrary homomorphism of
abelian groups $h\colon M\to M'$ we have a homomorphism
$\Fun(h)\colon \Fun(M)\to \Fun(M')$ given by the rule $\Fun(h)f=hf$,
where $f\in\Fun(M)$. This homomorphism commutes with the action of
$\Sigma$. This group has the following universal property.

\begin{lemma}\label{taylormod}
Let $N$ and $M$ be abelian groups. Suppose that $\Sigma$ acts on $N$
by automorphisms. Then for every homomorphism of abelian groups
$\varphi\colon N\to M$ and every element $\sigma\in \Sigma$ there is
a unique homomorphism of abelian groups $\Phi_{\sigma}\colon N\to
\Fun(M)$ such that the diagram
$$
\xymatrix{
    &\Fun(M)\ar[d]^{\gamma_{\sigma}}\\
    N\ar[r]^{\varphi}\ar[ru]^{\Phi_\sigma}&M\\
}
$$
is commutative and $\Phi_\sigma$ commutes with the action of
$\Sigma$.
\end{lemma}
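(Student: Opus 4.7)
The plan is to mirror the proof of Proposition~\ref{taylor} almost verbatim, dropping every clause about multiplication. Nothing in that earlier argument used the ring structure of $A$ or $B$ beyond additivity and the $\Sigma$-action, so the same formula should work at the level of abelian groups.

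First I would establish uniqueness, since it dictates the formula. If $\Phi_\sigma$ is $\Sigma$-equivariant and satisfies $\gamma_\sigma \circ \Phi_\sigma = \varphi$, then for any $n\in N$ and $\tau\in\Sigma$,
$$
\Phi_\sigma(n)(\tau^{-1}\sigma)=\bigl(\tau\Phi_\sigma(n)\bigr)(\sigma)=\gamma_\sigma\bigl(\Phi_\sigma(\tau n)\bigr)=\varphi(\tau n),
$$
so replacing $\tau^{-1}\sigma$ by an arbitrary $\mu\in\Sigma$ forces
$$
\Phi_\sigma(n)(\mu)=\varphi(\sigma\mu^{-1}n).
$$

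Next I would turn this around and \emph{define} $\Phi_\sigma$ by the above rule. Additivity of $\Phi_\sigma$ as a map $N\to\Fun(M)$ follows pointwise from additivity of $\varphi$ together with the fact that $\sigma\mu^{-1}$ acts on $N$ by a group automorphism. The commutativity of the diagram is the single computation $\Phi_\sigma(n)(\sigma)=\varphi(\sigma\sigma^{-1}n)=\varphi(n)$. Finally, the $\Sigma$-equivariance is verified by the same chain as in Proposition~\ref{taylor}:
$$
\bigl(\nu\Phi_\sigma(n)\bigr)(\tau)=\Phi_\sigma(n)(\nu^{-1}\tau)=\varphi(\sigma\tau^{-1}\nu n)=\Phi_\sigma(\nu n)(\tau).
$$

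There is no genuine obstacle here; the whole content of the lemma is recognizing that $\Fun(M)=M^\Sigma$ is the cofree $\Sigma$-module on $M$ in the category of abelian groups, and that the bookkeeping from Proposition~\ref{taylor} is purely additive. The only thing I would double-check is the handedness of the $\Sigma$-action convention $(\tau f)(\sigma)=f(\tau^{-1}\sigma)$, since this is what makes $\sigma\mu^{-1}n$ (rather than, say, $\mu\sigma^{-1}n$) the correct substitution; once that is pinned down, the three bullet-style verifications above finish the proof.
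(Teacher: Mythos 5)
Your proposal is correct and follows essentially the same route as the paper: both derive the forced formula $\Phi_\sigma(n)(\tau)=\varphi(\sigma\tau^{-1}n)$ from equivariance plus $\gamma_\sigma\circ\Phi_\sigma=\varphi$, then define $\Phi_\sigma$ by that rule and verify additivity, commutativity of the diagram, and $\Sigma$-equivariance by the identical chain of equalities. The observation that the argument is the purely additive shadow of Proposition~\ref{taylor} is exactly the paper's intent.
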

\begin{proof}
If such a homomorphism $\Phi_\sigma$ exists then it satisfies the
property $\nu \Phi_\sigma(n)=\Phi_\sigma (\nu n)$, where
$\nu\in\Sigma$ and $n\in N$. Therefore, we have
$$
\Phi_\sigma(n)(\tau)=\Phi_\sigma(n)((\tau\sigma^{-1})\sigma)=
\Phi_\sigma((\tau\sigma^{-1})^{-1}n)=\varphi(\sigma\tau^{-1}n)
$$
So, such homomorphism is unique.

For existence let us define $\Phi_\sigma$ by the rule
$$
\Phi_\sigma(n)(\tau)=\varphi(\sigma\tau^{-1}n).
$$
It is clear that this mapping is a homomorphism of abelian groups.
Now we shall check that it commutes with the action of $\Sigma$. Let
$\nu\in\Sigma$, then
$$
(\nu\Phi_{\sigma}(n))(\tau)=\Phi_{\sigma}(\nu^{-1}\tau)=\varphi(\sigma(\nu^{-1}\tau)^{-1}n)=
\varphi(\sigma\tau^{-1}\nu n)=\Phi_{\sigma}(\nu n)(\tau)
$$

\end{proof}

Let $A$ be a commutative ring with an identity. Let us denote the
category of all $A$ modules by $A{-}\mathbf{mod}$. For a given ring
$A$ we construct the ring $\Fun(A)$ and will denote the category of
all difference $\Fun(A)$ modules by
$\Sigma{-}\Fun(A){-}\mathbf{mod}$.

Let $M$ be an $A$ module, then we can produce an abelian group
$\Fun(M)$. The group $\Fun(M)$ has a structure of $\Fun(A)$ module
by the rule $(fh)(\sigma)=f(\sigma)h(\sigma)$, where $f\in\Fun(A)$
and $h\in \Fun(M)$. As we can see $\Fun(M)$ is a difference
$\Fun(A)$-module.

Let $f\colon M\to M'$ be a homomorphism of $A$-modules. Then the
homomorphism $\Fun(f)\colon \Fun(M)\to \Fun(M')$ is a difference
homomorphism of $\Fun(A)$ modules. As we can see these data define a
functor.

Let $N$ be a difference $\Fun(A)$ module. Let $e\in\Fun(A)$ be the
indicator of the identity element of $\Sigma$. Consider an abelian
group $eN$ in $N$. We have a homomorphism $A\to \Fun(A)$ such that
every element of $A$ maps to a constant function. Therefore, module
$N$ has a structure of $A$ module. Moreover, $eN$ is a submodule
under defined action of $A$. There is another way to provide an
action of $A$ on $eN$. We have a homomorphism
$\gamma_e\colon\Fun(A)\to A$. Then for any element $a\in A$ we can
take its preimage $x$ in $\Fun(A)$ and we define $an=xn$ for $n\in
eN$. This definition is well-defined. Indeed, let $x'$ be another
preimage of $a$, then $x-x'=(1-e)y$. Therefore, $xn=x'n$ for all
$n\in eN$. Particulary, for every $a\in A$ we can take a constant
function with a value $a$. Therefore, both defined actions coincide
to each other.

The second construction can be described in other terms. For the
homomorphism $\gamma_e\colon \Fun(A)\to A$ and a difference
$\Fun(A)$-module $N$ we consider a module
$N\mathop{\otimes}_{\Fun(A)} A_{\gamma_e}$, where index $\gamma_e$
reminds us the structure of $\Fun(A)$-module on $A$. So, we have a
functor in other direction.

\begin{theorem}\label{equivmod}
The functors
\begin{align*}
\Fun\colon& A{-}\mathbf{mod}\to
\Sigma{-}\Fun(A){-}\mathbf{mod}\\
{-}\mathop{\otimes}_{\Fun(A)}A_{\gamma_e}\colon&
\Sigma{-}\Fun(A){-}\mathbf{mod}\to A{-}\mathbf{mod}\\
\end{align*}
are inverse to each other equivalences.
\end{theorem}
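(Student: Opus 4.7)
The plan is to unwind both composite functors and produce explicit natural isomorphisms, exploiting the decomposition of $\Fun(A)$ by the idempotents at the points of $\Sigma$. For each $\sigma\in\Sigma$ let $e_\sigma\in\Fun(A)$ denote the indicator of $\{\sigma\}$, and write $e=e_e$. Then $\Fun(A)=\bigoplus_\sigma Ae_\sigma$, each $e_\sigma$ is an idempotent with $e_\sigma e_\tau=0$ for $\sigma\neq\tau$ and $\sum_\sigma e_\sigma=1$, and the action of $\Sigma$ permutes them by $\tau e_\sigma=e_{\tau\sigma}$ (immediate from $(\tau f)(\rho)=f(\tau^{-1}\rho)$). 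Consequently every $\Fun(A)$-module $N$ decomposes as $N=\bigoplus_\sigma e_\sigma N$, and multiplication by $\sigma\in\Sigma$ gives a group isomorphism $eN\xrightarrow{\sim} e_\sigma N$ with inverse $\sigma^{-1}$.

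First I would identify the second functor more concretely. For a difference $\Fun(A)$-module $N$, one checks that the natural map $N\to eN$, $n\mapsto en$, induces an isomorphism $N\otimes_{\Fun(A)}A_{\gamma_e}\xrightarrow{\sim} eN$, and the induced $A$-module structure on $eN$ coincides with the one described in the text (obtained via any preimage under $\gamma_e$, or equivalently by multiplication by the constant function $a\in\Fun(A)$). Specialising to $N=\Fun(M)$ gives $e\Fun(M)$, whose elements are supported at $e\in\Sigma$, and the evaluation map $f\mapsto f(e)$ is an $A$-linear bijection $e\Fun(M)\xrightarrow{\sim} M$. This is visibly natural in $M$, which yields the natural isomorphism $\bigl({-}\mathop{\otimes}_{\Fun(A)}A_{\gamma_e}\bigr)\circ\Fun \cong \operatorname{id}_{A{-}\mathbf{mod}}$.

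Next I would construct the counit $\epsilon_N\colon \Fun(eN)\to N$ by
$$
\epsilon_N(f)=\sum_{\sigma\in\Sigma}\sigma\bigl(f(\sigma)\bigr),
$$
the sum being finite since $\Sigma$ is finite. A short computation using $\tau e_\sigma=e_{\tau\sigma}$ and the compatibility $\sigma(gn)=(\sigma g)(\sigma n)$ shows that $\epsilon_N$ is both $\Sigma$-equivariant and $\Fun(A)$-linear: if $g\in\Fun(A)$ then $g\cdot\sigma f(\sigma)=(g(\sigma)e_\sigma)\cdot\sigma f(\sigma)$, matching $\sigma\bigl(g(\sigma)f(\sigma)\bigr)$ term by term. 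To invert $\epsilon_N$ I would send $n\in N$ to the function $f_n(\sigma)=\sigma^{-1}(e_\sigma n)\in eN$; one then verifies $\epsilon_N(f_n)=\sum_\sigma e_\sigma n=n$ and conversely $f_{\epsilon_N(f)}(\tau)=\tau^{-1}(e_\tau\epsilon_N(f))=\tau^{-1}\bigl(\tau f(\tau)\bigr)=f(\tau)$, using that $e_\tau\cdot\sigma f(\sigma)=0$ for $\sigma\neq\tau$. Naturality in $N$ is straightforward.

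The only genuine thing to check is well-definedness and compatibility of the various module/difference structures on $eN$, which is a bit delicate because the $A$-action on $eN$ is defined through a choice of preimage under $\gamma_e$; the main obstacle will therefore be making sure $\epsilon_N$ respects the $\Fun(A)$-action globally, not just the induced $A$-action on $eN$. This reduces, via the orthogonal decomposition $1=\sum_\sigma e_\sigma$, to the identity $\sigma(g(\sigma)e\cdot f(\sigma))=g(\sigma)e_\sigma\cdot\sigma f(\sigma)$, which is exactly the covariance relation above. Once these two natural isomorphisms $\eta_M\colon M\xrightarrow{\sim}e\Fun(M)$ and $\epsilon_N\colon\Fun(eN)\xrightarrow{\sim}N$ are in hand, the equivalence follows at once.
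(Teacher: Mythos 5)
Your proposal is correct and follows essentially the same route as the paper: the unit is evaluation at the identity on $e\Fun(M)$, and your inverse $n\mapsto f_n$ with $f_n(\sigma)=\sigma^{-1}(e_\sigma n)=e\sigma^{-1}(n)$ is exactly the paper's Taylor-type map $\Phi_N$, so your counit $\epsilon_N$ is just its inverse written explicitly. The verifications (orthogonal idempotent decomposition, $\Fun(A)$-linearity, $\Sigma$-equivariance, the two composites being identities) match the paper's in substance.
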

\begin{proof}
Consider the composition $G\circ \Fun$. Let $M$ be an arbitrary
$A$-module, then $G\Fun (M)=e\Fun(M)$. Now we see that the
restriction of $\gamma_e$ on $e\Fun(M)$ is the desired natural
isomorphism.

Now, let $N$ be a difference $\Fun(A)$-module. Then we have $\Fun  G
(N)=\Fun(eN)$. Using Lemma~\ref{taylormod} we can define a
homomorphism $\Phi_N\colon N\to \Fun(eN)$. Let us show that this
homomorphism is a homomorphism of $\Fun(A)$-modules. Consider
$a\in\Fun (A)$ and $n\in N$. Then
$$
\Phi_N(an)(\tau)=e(\tau^{-1}(an))=e\tau^{-1}(a)\tau^{-1}(n)=ea_{\tau}\tau^{-1}(n)
$$
and
$$
(a\Phi_N(n))(\tau)=a_\tau \Phi_N(n)(\tau)=e a_\tau \tau^{-1}(n).
$$

We are going to show that $\Phi_N$ is a desired natural isomorphism.
For that we need to show that all $\Phi_N$ are isomorphisms. Let
$n\in N$. We have an equality $1=\sum_\sigma e_\sigma$, where
$e_\sigma$ is the indicator of the element $\sigma$. Then
$n=\sum_\sigma e_\sigma n$. Therefore, there is $\sigma$ such that
$e_\sigma n\neq 0$. Thus,
$e\sigma^{-1}n=\sigma^{-1}(e_{\sigma}n)\neq 0$. Thus, $\Phi_N(n)$ is
not a zero function.

Now consider the function $\Phi_N(e_\sigma \sigma(n))$. Let us
calculate its values.
$$
\Phi_N(e_\sigma \sigma(n))(\tau)=e(\tau^{-1}(e_\sigma\sigma(n)))=e
e_{\tau^{-1}\sigma}\tau^{-1}\sigma (n)= \left\{
\begin{aligned}
&en, &\tau=\sigma\\
&0, &\tau\neq\sigma\\
\end{aligned}
\right.
$$
Therefore, $\Phi_N(e_{\sigma}\sigma(n))=(en)e_{\sigma}$. Since every
element of $\Fun(eN)$ is of the form $\sum_\sigma
(en_\sigma)e_\sigma$ we get the desired.
\end{proof}

Now let $B$ be an $A$-algebra, then $\Fun(B)$ is a difference
$\Fun(A)$-algebra, and conversely if $D$ is an $\Fun(A)$-difference
algebra, then $D\mathop{\otimes}_{\Fun(A)}A_{\gamma_e}$ is an
$A$-algebra. Moreover, we see that for every ring homomorphism
$f\colon B\to B'$ the mapping $\Fun(f)\colon \Fun(B)\to\Fun(B')$ is
a difference homomorphism, and for arbitrary difference homomorphism
$h\colon D\to D'$ the mapping $h\mathop{\otimes} Id\colon
D\mathop{\otimes}_{\Fun(A)}A_{\gamma_e}\to
D'\mathop{\otimes}_{\Fun(A)}A_{\gamma_e}$ is a ring homomorphism.

Let us denote by $A{-}\mathbf{alg}$ the category of $A$-algebras and
by $\Sigma{-}\Fun(A){-}\mathbf{alg}$ the category of difference $\Fun(A)$-algebras.
So, we have proved a theorem.

\begin{theorem}\label{equivalg}
Functors
\begin{align*}
\Fun\colon& A{-}\mathbf{alg}\to
\Sigma{-}\Fun(A){-}\mathbf{alg}\\
{-}\mathop{\otimes}_{\Fun(A)}A_{\gamma_e}\colon&
\Sigma{-}\Fun(A){-}\mathbf{alg}\to A{-}\mathbf{alg}\\
\end{align*}
are inverse to each other equivalences.
\end{theorem}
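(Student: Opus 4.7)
The plan is to bootstrap the theorem from the module equivalence of Theorem~\ref{equivmod}. Both functors are already defined on the underlying module categories, so what remains is to check that (i) each functor sends algebras to algebras and algebra homomorphisms to algebra homomorphisms, and (ii) the two natural isomorphisms furnished in the proof of Theorem~\ref{equivmod}, namely $\gamma_e\colon e\Fun(M)\to M$ and $\Phi_N\colon N\to\Fun(eN)$, are ring homomorphisms when their inputs are algebras.

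First I would record that $\Fun(B)$ carries the pointwise product, so $\Fun$ manifestly sends an $A$-algebra map to a $\Fun(A)$-algebra map. On the other side, if $N$ is a difference $\Fun(A)$-algebra then, because $e\in\Fun(A)$ is idempotent and central, $eN$ is closed under multiplication with unit $e$; since the $A$-action on $eN$ factors through $\gamma_e$, the inclusion $A\to\Fun(A)\to\mathrm{End}(eN)$ lands in the center, so $eN$ is an $A$-algebra. (Under the identification $eN\simeq N\otimes_{\Fun(A)}A_{\gamma_e}$ this is the usual tensor-product algebra structure.) Functoriality on morphisms is then automatic.

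Next I would verify multiplicativity of the natural isomorphisms. For $\gamma_e$: if $f,g\in e\Fun(M)$ then $(fg)(e)=f(e)g(e)$ and $f(e)=e(e)f(e)=1\cdot f(e)$, so evaluation at $e$ is a ring map from $e\Fun(M)$ onto $M$ sending the unit (the constant-$1$ function, which equals $e\cdot 1_{\Fun(M)}$ after multiplying by $e$) to $1$. For $\Phi_N$, recall $\Phi_N(n)(\tau)=e\cdot\tau^{-1}(n)$. For $n,m\in N$,
\[
\Phi_N(nm)(\tau)=e\,\tau^{-1}(n)\tau^{-1}(m)=\bigl(e\,\tau^{-1}(n)\bigr)\bigl(e\,\tau^{-1}(m)\bigr)=(\Phi_N(n)\Phi_N(m))(\tau),
\]
where the middle equality uses $e^2=e$; and $\Phi_N(1)(\tau)=e$, which is the unit of $\Fun(eN)$. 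Hence $\Phi_N$ is a $\Fun(A)$-algebra homomorphism, and by Theorem~\ref{equivmod} it is already a bijection.

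The step I expect to be the only subtle one is checking that the $A$-algebra structure on $eN$ agrees with the one coming from $N\otimes_{\Fun(A)}A_{\gamma_e}$, since in the proof of Theorem~\ref{equivmod} two a priori different $A$-module structures on $eN$ were shown to coincide; here the analogous check is that the ring structures also coincide, which again follows from the idempotency of $e$ together with the observation that constant functions commute with every element of $\Fun(A)$. Once these verifications are in place, the natural isomorphisms of Theorem~\ref{equivmod} upgrade verbatim to natural isomorphisms of algebra-valued functors, proving the theorem.
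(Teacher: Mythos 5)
Your proposal is correct and follows the same route as the paper, which simply observes that $\Fun$ and ${-}\mathop{\otimes}_{\Fun(A)}A_{\gamma_e}$ carry algebras to algebras and algebra maps to algebra maps, so that the module equivalence of Theorem~\ref{equivmod} restricts to an equivalence of algebra categories. You supply the multiplicativity checks for $\gamma_e$ and $\Phi_N$ that the paper leaves implicit, but the argument is the same.
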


For the difference ring $\Fun(A)$ there is a homomorphism
$\gamma_e\colon \Fun(A)\to A$. For every difference
$\Fun(A)$-algebra $B$ the last homomorphism induce a homomorphism
$B\to B\mathop{\otimes}_{\Fun(A)}A_{\gamma_e}$. We shall similarly
denote the last homomorphism  by $\gamma_e$.

\subsection{Adjoint variety}\label{sec49}

Let $X\subseteq A^n$ be a pseudovariety over a difference closed
pseudofield $A$. We know that $A=\Fun(K)$, where $K$ is an
algebraically closed field. We shall connect with $X$ a
corresponding algebraic variety over $K$. Moreover, if $X$ has a
group structure such that all group laws are regular mappings, then
the corresponding algebraic variety will be an algebraic group.

From the previous section we have the equivalence of categories
$K{-}\mathbf{alg}$ and $\Sigma{-}A{-}\mathbf{alg}$. Now we note that
a difference $A$-algebra $B$ is difference finitely generated over
$A$ iff the algebra $G(B)$ is finitely generated over $K$. For any
pseudovariety $X$ we construct the ring of regular functions
$A\{X\}$. This ring is difference finitely generated over $A$,
therefore, the ring $B=G(A\{X\})$ is finitely generated over $K$.
The last ring defines an algebraic variety $X^*$ such that the ring
of regular functions $K[X^*]$ coincides with $B$. The variety $X^*$
will be called an adjoint variety for $X$.

Now consider an arbitrary pseudovariety $X\subseteq A^n$. Then a
difference line $A$ has a natural structure of an affine space over
$K$. Indeed, $A=\Fun(K)=K^m$, where $m=|\Sigma|$. Therefore, the set
$X$ can be considered as a subset in $K^{mn}$. We claim that this
subset is an algebraic variety over $K$ that can be naturally
identified with $X^*$.

Now we shall show that there is a natural bijection between $X$ and
$X^*$. Indeed, pseudovariety $X$ can be naturally identified with
$$
\hom_{\Sigma{-}A{-}\mathbf{alg}}(A\{X\},A).
$$
From Theorem~\ref{equivalg} the last set can be naturally identified
with
$$
\hom_{K{-}\mathbf{alg}}(K[X^*],K).
$$
And the last set coincides with $X^*$. So, we have constructed a
mapping $\varphi\colon X\to X^*$.

We shall describe the bijection $\varphi$ explicitly. Consider a
pseudovariety $X$ over difference closed pseudofield $A$ and let
$A=\Fun(K)$, where $K$ is algebraically closed. Suppose that $X$ is
a subset of $A^n$. Then the algebra $A\{X\}$ is of the form
$A\{x_1,\ldots,x_n\}/I(X)$, where $I(X)$ is the ideal of definition
for $X$. For every point $(a_1,\ldots,a_n)\in X$ we construct a
difference homomorphism $A\{X\}\to A$ by the rule $f\mapsto
f(a_1,\ldots,a_n)$. Using this rule the variety $X$ can be
identified with
$$
\{\, \varphi\colon A\{X\}\to A \mid \varphi\mbox{ is a }\Sigma\mbox{
homomorphism },\: \varphi|_{A}=Id\,\}
$$
Then it follows from Theorem~\ref{taylor} that this set coincides
with
$$
\{\, \varphi\colon A\{X\}\to K\mid \varphi\mbox{ is a homomorphism},
\: \varphi|_{A}=\gamma_e \,\}
$$
Let $e\in A$ be the indicator of the identity element of $\Sigma$.
Then for every homomorphism $\varphi$ such that
$\varphi|_{A}=\gamma_e$ the element $1-e$ is in $\ker\varphi$. And
if we identify the subring $eA$ with $K$ we get that the previous
set coincides with
$$
\{\,\varphi\colon eA\{X\}\to K\mid\varphi\mbox{ is a
homomorphism},\:\varphi|_{eA}=Id\,\}
$$
Since $A\{X\}=A\{x_1,\ldots,x_n\}/I(X)$, then $eA\{X\}$ is of the
form
$$
K[\ldots,\sigma x_i,\ldots]/eI(X).
$$
And every homomorphism $f\colon eA\{X\}\to K$ we identify with the
point
$$
(\ldots,f(\sigma x_i),\ldots)\in X^*\subseteq
K^{n|\Sigma|}.
$$ If $\xi$ presents an element of $X$ and $\epsilon$
presents the corresponding to $\xi$ element of $X^*$ then we have
the following commutative diagram
$$
\xymatrix{
    A\{X\}\ar[r]^{\xi}\ar[d]^{\gamma_e}&A\ar[d]^{\gamma_e}\\
    K[X^*]\ar[r]^{\epsilon}&K\\
}
$$

Let $f\colon X\to Y$ be a regular mapping of pseudovarieties $X$ and
$Y$. This mapping induce a difference homomorphism $\bar f\colon
A\{Y\}\to A\{X\}$. Then applying functor $G$ we get $G(\bar f)\colon
K[Y^*]\to K[X^*]$. The last homomorphism induce a regular mapping
$f^*\colon X^*\to Y^*$.

\begin{proposition}
Let $f\colon X\to Y$ be a regular mapping of pseudovarieties $X$ and
$Y$. Then
\begin{enumerate}
\item The diagram
$$
\xymatrix{
    X\ar[r]^{f}\ar[d]^{\varphi}&Y\ar[d]^{\varphi}\\
    X^*\ar[r]^{f^*}&Y^*\\
}
$$
is commutative.
\item $\varphi$ is a homeomorphism.
\item $\varphi$ preserves unions, intersections, and complements.
\end{enumerate}
\end{proposition}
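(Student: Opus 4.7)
The plan is to dispatch the three parts in order, treating (1) as a diagram chase, (2) as the real content (transport of topology through the equivalence of categories), and (3) as a formal consequence of (2).

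For part (1), I unwind the explicit description of $\varphi$ given just before the proposition. A point $x\in X$ corresponds to a difference homomorphism $\xi\colon A\{X\}\to A$ with $\xi|_A=Id$, and $\varphi(x)\in X^*$ corresponds to $\gamma_e\circ\xi\colon A\{X\}\to K$, which factors through $eA\{X\}=K[X^*]$. The regular map $f$ comes from a difference homomorphism $\bar f\colon A\{Y\}\to A\{X\}$, and by construction $\xi_{f(x)}=\xi_x\circ\bar f$; similarly $f^*\colon X^*\to Y^*$ is induced by $G(\bar f)\colon K[Y^*]\to K[X^*]$, which is just the restriction of $\bar f$ to $eA\{Y\}\subseteq A\{Y\}$. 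Comparing $\varphi(f(x))=\gamma_e\circ\xi_x\circ\bar f$ on $eA\{Y\}$ with $f^*(\varphi(x))=(\gamma_e\circ\xi_x)\circ G(\bar f)$ gives the same homomorphism, proving commutativity.

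For part (2), bijectivity of $\varphi$ was essentially established in the paragraphs building the adjoint variety (via $\Hom^\Sigma_A(A\{X\},A)\cong\Hom_K(K[X^*],K)$ from Theorem~\ref{equivalg}). The substantive claim is that $\varphi$ is a homeomorphism. I will use the explicit correspondence $\frak a\mapsto e\frak a$ between difference ideals of $A\{X\}$ and ideals of $eA\{X\}=K[X^*]$, with inverse $\frak b\mapsto\sum_\sigma\sigma(\frak b)$. The verification uses the orthogonal decomposition $1=\sum_\sigma e_\sigma$ in $A=\Fun K$: for any difference ideal $\frak a$, an element $f\in\frak a$ recovers from $e\frak a$ via $f=\sum_\sigma e_\sigma f=\sum_\sigma\sigma(e\sigma^{-1}(f))$, while for any ideal $\frak b$ of $eA\{X\}$, the computation $e\cdot\sigma(eg)=e\cdot e_\sigma\sigma(g)=0$ for $\sigma\neq e$ shows $e\sum_\sigma\sigma(\frak b)=\frak b$. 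Under this bijection, radicality is preserved, and a pseudomaximal ideal $\frak m$ of $A\{X\}$ corresponds to a maximal ideal of $K[X^*]$ with residue field $K$ (since $A\{X\}/\frak m\cong A$ by Theorem~\ref{equtheor}, whence $eA\{X\}/e\frak m\cong K$). Finally, containment is preserved ($\frak m\supseteq\frak a$ iff $e\frak m\supseteq e\frak a$), so closed sets $V(\frak a)\subseteq X\cong\PMax A\{X\}$ correspond bijectively to closed sets $V(e\frak a)\subseteq X^*\cong\Max K[X^*]$ under $\varphi$.

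For part (3), since $\varphi$ is a bijection (part (2) or the pointwise description), the set-theoretic operations of union, intersection, and complement are automatically preserved in both directions.

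The main obstacle is the ideal correspondence in part (2); everything else reduces to bookkeeping once that is in hand. The content of the correspondence is elementary given the orthogonal idempotent structure of $A=\Fun K$, but one must check carefully that the operations $\frak a\mapsto e\frak a$ and $\frak b\mapsto\sum_\sigma\sigma(\frak b)$ are genuinely inverse and transport the Zariski topology correctly.
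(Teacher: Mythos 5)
Your proof is correct and takes essentially the same route as the paper's: part (1) is the same diagram chase through $\gamma_e$, and part (3) is the same formal observation that bijections preserve Boolean operations. For part (2) the paper simply cites Theorem~\ref{equivmod} to assert that every difference ideal of $A\{X\}$ has the form $\Fun(I)$ for an ideal $I$ of $K[X^*]$; your explicit bijection $\frak a\mapsto e\frak a$ with inverse $\frak b\mapsto\sum_\sigma\sigma(\frak b)$ is exactly that equivalence unpacked via the orthogonal idempotents, so the content is identical.
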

\begin{proof}
(1) The proof follows from commutativity of the diagram
$$
\xymatrix{
    A\{Y\}\ar[r]^{\bar f}\ar[d]^{\gamma_e}&A\{X\}\ar[r]^{\xi}\ar[d]^{\gamma_e}&A\ar[d]^{\gamma_e}\\
    K[Y^*]\ar[r]^{G(\bar f)}&K[X^*]\ar[r]^{\varphi(\xi)}&K\\
}
$$

(2) Let $I$ be an ideal of the ring $K[X^*]$ then $\Fun(I)$ is an
ideal of $A\{X\}$. Then it follows from Theorem~\ref{equivmod} that
difference homomorphism $\xi\colon A\{X\}\to A$ maps $F(I)$ to zero
iff the homomorphism $\varphi(\xi)\colon K[X^*]\to K$ maps the ideal
$I$ to zero. And converse, from Theorem~\ref{equivmod} we know that
every difference ideal of $A\{X\}$ has the form $\Fun(I)$ for some
ideal $I$ of $K[X^*]$.

(3) Every bijection preserves such operations.
\end{proof}

Here we recall that the set $X^*$ coincides with $X$ if we consider
an affine  pseudospace $\Fun(K)$ as an affine space $K^{|\Sigma|}$.
So, the morphism $f^*\colon X^*\to Y^*$ is just the initial mapping
$f\colon X\to Y$ if we identify $X$ with $X^*$ and $Y$ with $Y^*$.
In other words, there is no difference between pseudovarieties over
$\Fun(K)$ and algebraic varieties over $K$.

Moreover, we shall show that using this correspondence between
pseudovarieties and algebraic varieties all geometric theorems
like~\ref{imst}, \ref{constrst}, and~\ref{openst} can be derived
from the same theorems for algebraic varieties. But if we analyze
the proofs of the mentioned theorems we will see that they remain
valid even for an arbitrary pseudofield if we take pseudomaximal (or
pseudoprime) spectra instead of pseudovarieties. Therefore, we
adduce direct proofs of Theorems~\ref{imst}, \ref{constrst},
and~\ref{openst}. But now we are going to derive them from results
about algebraic varieties.

\begin{theorem}
Let $f\colon X\to Y$ be a regular mapping of pseudovarieties. Then
\begin{enumerate}
\item If $Y$ is irreducible and $f$ is dominant then the image of $f$
contains open subset.
\item For every constructible set $E\subseteq X$ the set $f(E)$ is
also constructible.
\item Let the image of $f$ be dense, then there is an open subset $U\subseteq
X$ such that the restriction of $f$ onto $U$ is an open mapping.
\end{enumerate}
\end{theorem}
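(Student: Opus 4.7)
The plan is to push everything through the adjoint construction $X \leadsto X^*$ and to invoke the three classical theorems on morphisms of algebraic varieties (Chevalley's theorem and its consequences, plus generic openness). The previous proposition gives the homeomorphism $\varphi\colon X \to X^*$, its compatibility with morphisms ($f^*\circ\varphi = \varphi\circ f$), and the fact that $\varphi$ sends open, closed, irreducible and constructible subsets to open, closed, irreducible and constructible subsets, since it is a homeomorphism commuting with unions, intersections, and complements.

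For part~(1), I would observe that $Y$ irreducible is equivalent to $Y^*$ irreducible (homeomorphism), and that the density of $f(X)$ in $Y$ is equivalent to the density of $f^*(X^*)$ in $Y^*$; hence $f^*$ is a dominant morphism of algebraic varieties with irreducible target. The classical theorem (e.g.\ \cite[chapter~7, ex.~21]{AM}) furnishes a nonempty open $V^*\subseteq f^*(X^*)$ inside $Y^*$, and $\varphi^{-1}(V^*)$ is then the required open subset of $Y$ contained in $f(X)$. For part~(2), if $E\subseteq X$ is constructible then $\varphi(E)$ is a finite boolean combination of open and closed subsets of $X^*$, hence constructible in the algebraic sense. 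Chevalley's theorem for regular morphisms of algebraic varieties gives that $f^*(\varphi(E))$ is constructible in $Y^*$; applying $\varphi^{-1}$ and using that $f^*\circ\varphi = \varphi\circ f$ yields $f(E) = \varphi^{-1}(f^*(\varphi(E)))$, which is constructible in $Y$.

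For part~(3), density of $f(X)$ gives density of $f^*(X^*)$, so $f^*\colon X^*\to Y^*$ is dominant. The generic openness theorem for finitely generated algebras over a field (via generic freeness, as in the proof of Proposition~\ref{openst}) produces an open subset $V^*\subseteq X^*$ such that $f^*|_{V^*}$ is an open map; the open set $U = \varphi^{-1}(V^*)\subseteq X$ then works for $f$, because $\varphi$ identifies the topology of $X$ with that of $X^*$ and commutes with $f^*$. The one point that needs care is that the open set $V^*$ obtained over $K$ must be of the form $\varphi(U)$ for a \emph{pseudo}-open $U\subseteq X$; but this is automatic since $\varphi$ is a homeomorphism, so every open subset of $X^*$ arises this way.

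The only genuine obstacle is verifying that the passage $X\leadsto X^*$ really preserves the hypotheses of the three algebraic statements (irreducibility, density of image, constructibility of the source set), and that the conclusions pull back through $\varphi^{-1}$ into statements about the original pseudovariety topology. All of this is encoded in the preceding proposition, so the proof is essentially a bookkeeping exercise once the dictionary is set up; no new algebraic input is needed beyond the classical Chevalley-type results that are cited as \cite[chapter~7, ex.~21, ex.~22]{AM} and the generic freeness theorem \cite[chapter~8, sec.~22, th.~52]{Mu} already invoked in the direct proofs above.
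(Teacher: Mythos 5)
Your proposal is correct and follows essentially the same route as the paper: transfer everything to the adjoint varieties via the homeomorphism $\varphi$, apply the classical results (Chevalley's theorem, generic openness via generic freeness as in \cite[chapter~8, sec.~22, th.~52]{Mu}), and pull the conclusions back through $\varphi^{-1}$. Nothing substantive is missing.
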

\begin{proof}
(1) Consider $f^*\colon X^*\to Y^*$. Then the image of $f^*$ is
dense and $Y^*$ is irreducible. Therefore, it follows
from~\cite[chapter~5, ex.~21]{AM} that the image of $f^*$ contains
an open subset $U$. The corresponding subset $\varphi^{-1}(U)$ is an
open subset in the image of $f$.

(2) As in previous situation we consider a mapping $f^*\colon X^*\to
Y^*$. The set $E$ is of the following form $E=U_1\cap
V_1\cup\ldots\cup U_n\cap V_n$, where $U_i$ are open and $V_i$ are
closed. Since $\varphi$ is a homeomorphism we see that $\varphi(E)$
is constructible. Then it follows from~\cite[chapter~7, ex.~23]{AM}
that $f(\varphi(E))$ is constructible in $Y^*$. And applying
$\varphi^{-1}$ we conclude that $f(E)$ is constructible.

(3) Again consider the regular mapping $f^*\colon X^*\to Y^*$. We
can find an element $s\in K[Y^*]$ such that $K[Y^*]_s$ is
irreducible. Then it follows from~\cite[chapter~8, sec.~22,
th.~52]{Mu} that there is an element $u\in K[Y^*]$ such that the
ring $K[X^*]_{su}$ is faithfully flat over $K[Y^*]_{su}$. Hence,
from~\cite[chapter~7, ex.~25]{AM} we have that the mapping
$f^*\colon (X^*)_{su}\to (Y^*)_{su}$ is open. Since $\varphi$ is a
homeomorphism we get the desired result.
\end{proof}

Now suppose that $X$ is a pseudovariety with a group structure such
that all group laws are regular mappings. The last one means that
$A\{X\}$ is a Hopf algebra over $A$. So, we have
\begin{align*}
\mu^*\colon& A\{X\}\to A\{X\times X\}=A\{X\}\mathop{\otimes}_A A\{X\} \\
i^*\colon& A\{X\}\to A\{X\}\\
\varepsilon^*\colon& A\{X\}\to A\\
\end{align*}
and these mappings satisfy all necessary identities. Since functors
$\Fun$ and $G$ are equivalences they preserve limits and colimits.
Therefore, they preserve products and tensor products. So, applying
the functor $G$ to $A\{X\}$ we get a Hopf algebra $K[X^*]=G(A\{X\})$
over a field $K$, because $G$ preserves all identities on mappings
$\mu^*$, $i^*$, and $\varepsilon^*$.

\subsection{Dimension}\label{sec410}

Let $X\subseteq A^n$ be a pseudovariety over a difference closed
pseudofield $A$, and as above we suppose that $A=\Fun(K)$, where $K$
is an algebraically closed field. Since $A$ is an Artin ring and
$A\{X\}$ is a finitely generated algebra over $A$, the ring $A\{X\}$
has finite Krull dimension. Therefore, we can define $\dim X$ as
$\dim A\{X\}$.

It follows from Theorem~\ref{equivalg} that $A\{X\}=\Fun(K[X^*])$.
In other words $A\{X\}$ is a finite product of the rings $K[X^*]$.
Therefore, algebras $A\{X\}$ and $K[X^*]$ have the same Krull
dimension. So, we have the following result.

\begin{proposition}
For arbitrary pseudovariety $X$ we have $\dim X=\dim X^*$.
\end{proposition}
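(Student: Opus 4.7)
The plan is to reduce everything to an elementary fact about Krull dimension of finite products of commutative rings, using the equivalence of categories already established.

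First I would invoke Theorem~\ref{equivalg} to identify $A\{X\}$ with $\Fun(K[X^*])$ as a $\Sigma$-algebra. The key point is that as a commutative ring (forgetting the $\Sigma$-action), $\Fun(K[X^*])$ is nothing but the finite product $\prod_{\sigma\in\Sigma} K[X^*]$, since $\Sigma$ is finite in this part of the paper and $\Fun B = \prod_{\sigma\in\Sigma} B$ by the definition in Section~\ref{sec33}. The dimension notions on both sides are purely commutative algebra notions (Krull dimension of the coordinate ring), so the $\Sigma$-action is irrelevant for the computation once we have the algebra isomorphism.

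Next I would use the standard fact that for a finite product of commutative rings one has
$$
\dim\left(\prod_{i=1}^{n} R_i\right) = \max_i \dim R_i,
$$
which follows at once from the description of $\Spec$ of a product as the disjoint union of the spectra. Since all factors in $\Fun(K[X^*])$ are isomorphic copies of $K[X^*]$, this maximum is simply $\dim K[X^*]$. Combining with the definitions $\dim X = \dim A\{X\}$ and $\dim X^* = \dim K[X^*]$ yields the equality.

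There is essentially no obstacle here: the substantive work was done in Section~\ref{sec48}, where the equivalence of categories was proved. The proposition is really a one-line consequence, and the only thing to be careful about is distinguishing the $\Sigma$-algebra structure from the underlying commutative ring structure when invoking the Krull dimension formula for products.
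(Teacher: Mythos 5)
Your proof is correct and is essentially the paper's own argument: both identify $A\{X\}$ with $\Fun(K[X^*])=\prod_{\sigma\in\Sigma}K[X^*]$ via Theorem~\ref{equivalg} and conclude equality of Krull dimensions from the product decomposition. You merely make explicit the standard fact $\dim\bigl(\prod_i R_i\bigr)=\max_i\dim R_i$, which the paper leaves implicit.
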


It should be noted that an affine pseudoline $A$ has dimension
$|\Sigma|$. Moreover, we have more general result.

\begin{proposition}
An affine pseudospace $A^n$ has dimension $n|\Sigma|$.
\end{proposition}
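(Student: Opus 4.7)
The plan is to combine the previous proposition $\dim X = \dim X^*$ with an explicit computation of the adjoint of the affine pseudospace. Since $A = \Fun(K)$ with $K$ algebraically closed, it suffices to identify $(A^n)^*$ with the affine space $K^{n|\Sigma|}$ over $K$, whose dimension is patently $n|\Sigma|$.

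To carry this out, I would first write the coordinate ring of the affine pseudospace as the difference polynomial ring
$$
A\{y_1,\ldots,y_n\} \;=\; A[\sigma y_i : \sigma\in\Sigma,\ i=1,\ldots,n],
$$
which as a plain commutative $A$-algebra is the polynomial ring in $n|\Sigma|$ indeterminates. Next, I would apply the equivalence of Theorem~\ref{equivalg}, namely the functor ${-}\otimes_{\Fun(A)} A_{\gamma_e}$ applied to $A\{y_1,\ldots,y_n\}$ viewed as a difference $\Fun(K)$-algebra. Since tensor product commutes with polynomial extensions, the result is $K[\sigma y_i : \sigma\in\Sigma,\ i=1,\ldots,n]$, a polynomial ring over $K$ in $n|\Sigma|$ variables. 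By the definition of the adjoint variety from Section~\ref{sec49}, this is precisely $K[(A^n)^*]$, so $(A^n)^*$ is the affine space $K^{n|\Sigma|}$.

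Finally, invoking the previous proposition, $\dim A^n = \dim (A^n)^* = n|\Sigma|$, where the second equality is the standard fact that a polynomial ring over a field in $m$ variables has Krull dimension $m$.

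The argument is essentially bookkeeping once one has the adjoint construction; there is no real obstacle. The only point that needs a line of justification is that the functor $G = {-}\otimes_{\Fun(A)}A_{\gamma_e}$ sends a polynomial $\Fun(K)$-algebra in $N$ generators to the polynomial $K$-algebra in $N$ generators, but this follows immediately from the fact that $G$ preserves colimits (being a left adjoint / part of an equivalence), hence preserves free algebras.
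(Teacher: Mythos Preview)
Your proposal is correct and follows essentially the same route as the paper: identify the coordinate ring of $A^n$ as the difference polynomial ring $A\{y_1,\ldots,y_n\}$, apply the functor $G$ to obtain the ordinary polynomial ring $K[\sigma y_i : \sigma\in\Sigma,\ 1\le i\le n]$ in $n|\Sigma|$ variables, and invoke the previous proposition $\dim X=\dim X^*$. The paper's proof is terser but the argument is the same; note only that in this section the base ring playing the role of ``$A$'' in Theorem~\ref{equivalg} is $K$, so the functor is ${-}\otimes_{A}K_{\gamma_e}$ (with $A=\Fun(K)$), not ${-}\otimes_{\Fun(A)}A_{\gamma_e}$ as you wrote.
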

\begin{proof}
The ring of regular functions on $A^n$ coincides with the ring of
difference polynomials $A\{y_1,\ldots,y_n\}$. Its image under the
functor $G$ coincides with $K[\ldots,\sigma y_i,\ldots]$. Number of
variables is $n|\Sigma|$.
\end{proof}

The last result agrees with our insight about structure of $A^n$.
Indeed, every $A$ can be identified with $K^{|\Sigma|}$. So, $A^n$
coincides with $K^{n|\Sigma|}$.

\paragraph{Acknowledgement.}

The idea to write this paper appeared after a conversation with
Michael Singer in Newark in 2007. The most significant machinery
such as inheriting and the Taylor homomorphism were developed under
the influence of William Keigher's papers. Eric Rosen brought to my
attention details of ACFA theory, he explained the algebraic meaning
of logical results to me. Alexander Levin helped me learn difference
algebra. Alexey Ovchinnikov suggested to use this new geometric
theory for the Picard-Vessiot theory with difference parameters.

\bibliographystyle{plain}
\bibliography{bibs}

\end{document}